\numberwithin{equation}{section}
\theoremstyle{plain}
\newtheorem{theoremintro}{Theorem}
\newtheorem{theorem}{Theorem}[section]
\newtheorem{lemma}[theorem]{Lemma}
\newtheorem{proposition}[theorem]{Proposition}
\newtheorem{corollary}[theorem]{Corollary}
\theoremstyle{definition}
\newtheorem{definition}[theorem]{Definition}
\theoremstyle{remark}
\newtheorem{remark}[theorem]{Remark}
\newtheorem{example}[theorem]{Example}
\setlist[enumerate,1]{label={\rm(\arabic*)}, ref={\rm\arabic*}}
\newcommand{\bbA}{\mathbb{A}}
\newcommand{\bbC}{\mathbb{C}}
\newcommand{\bbG}{\mathbb{G}}
\newcommand{\bbP}{\mathbb{P}}
\newcommand{\bbV}{\mathbb{V}}
\newcommand{\bbZ}{\mathbb{Z}}
\newcommand{\Gm}{\mathbb{G}_m}
\newcommand{\cB}{\mathcal{B}}
\newcommand{\cF}{\mathcal{F}}
\newcommand{\cJ}{\mathcal{J}}
\newcommand{\cL}{\mathcal{L}}
\newcommand{\cO}{\mathcal{O}}
\newcommand{\cS}{\mathcal{S}}
\newcommand{\cU}{\mathcal{U}}
\newcommand{\cX}{\mathcal{X}}
\newcommand{\rB}{\textup{B}}
\newcommand{\rE}{\textup{E}}
\newcommand{\rG}{\textup{G}}
\newcommand{\rH}{\textup{H}}
\newcommand{\rR}{\textup{R}}
\newcommand{\rd}{\textup{d}}
\newcommand{\an}{\textup{an}}
\newcommand{\too}{\longrightarrow}
\renewcommand{\phi}{\varphi}
\renewcommand{\epsilon}{\varepsilon}
\renewcommand{\ker}{\Ker}
\DeclareMathOperator{\rk}{rk}
\DeclareMathOperator{\pr}{pr}
\DeclareMathOperator{\cHom}{\mathcal{H}\hspace{-2.5pt}\textit{om}}
\DeclareMathOperator{\cEnd}{\mathcal{E}\hspace{-1pt}\textit{nd}}
\DeclareMathOperator{\Hom}{Hom}
\DeclareMathOperator{\im}{Im}
\DeclareMathOperator{\Ker}{Ker}
\DeclareMathOperator{\coker}{Coker}
\DeclareMathOperator{\Sk}{Sk}
\DeclareMathOperator{\Pic}{Pic}
\DeclareMathOperator{\End}{End}
\DeclareMathOperator{\Ext}{Ext}
\DeclareMathOperator{\Lie}{Lie}
\DeclareMathOperator{\ev}{ev}
\DeclareMathOperator{\id}{id}
\DeclareMathOperator{\At}{At}
\DeclareMathOperator{\tr}{tr}
\renewcommand{\le}{\leqslant}
\renewcommand{\ge}{\geqslant}
\def\lowsim{\vbox to 0pt{\vss\hbox{$\scriptstyle\sim$}\vskip-1.5pt}}
\newcommand{\supth}[1]{\ensuremath{#1^{\mathrm{th}}}}
\title{The universal vector extension of an abeloid variety}
\author{Marco Maculan}
\address{Institut de Math\'ematiques de Jussieu, Sorbonne Universit\'e, 4 place Jussieu, 75252 Paris, France}
\email{marco.maculan@imj-prg.fr}
\begin{document}


\maketitle

\begin{prelims}

\DisplayAbstractInEnglish

\bigskip

\DisplayKeyWords

\medskip

\DisplayMSCclass

\end{prelims}


\newpage

\setcounter{tocdepth}{1}

\tableofcontents


\section{Introduction}

\subsection{Background} Let $K$ be an algebraically closed non-trivially valued complete non-Archimedean field. The driving force behind Tate's foundation \cite{TateRigid} of rigid analysis was the uniformization of elliptic curves over a $p$-adic field: given an elliptic curve $E$ over $K$, there is an isomorphism $E(K) \cong K^{\times} / q^{\bbZ}$ of rigid analytic spaces for some $|q| < 1$ if and only if the $j$-invariant of $E$ is not integral.  Within the framework of Berkovich spaces, analytic spaces are locally path connected and locally contractible (in contrast with the total disconnectedness of~$K$), so that the usual theory of universal covers and fundamental groups can be applied. Tate's result then can be restated as follows. The topological space underlying the Berkovich space $E^\an$ attached to $E$ is contractible if and only if $E$ has good reduction; if this is not the case, the universal cover of $E^\an$ is $\Gm^{\an}$, its fundamental group is identified with $q^{\bbZ} \subset K^\times$ for some $|q| < 1$, and $E^\an = \Gm^{\an} / q^\bbZ$. Mumford generalized Tate's theorem both for higher-genus curves, see \cite{MumfordCurves}, and higher-dimensional abelian varieties, see \cite{MumfordDegeneratingAbelianVarieties}. Given an abelian variety $A$ over $K$, the universal cover of $A^\an$ is of the form $E^\an$ for a semi-abelian variety
\begin{equation} \label{Eq:UniversalCoverIntro}
  0 \too T \too E \too B \too 0,
\end{equation}
where $T$ is a $K$-torus and $B$ an abelian variety with good reduction. The topological space $E^\an$ is again seen to be contractible, and the fundamental group of $A^\an$ is seen to be a free abelian group $\Lambda \subset E(K) $ of rank $\dim T$. Later on L\"{u}tkebohmert \cite[Corollary 7.6.5]{LutkebohmertCurves} obtained such a uniformization result for all \emph{abeloid varieties}, that is, proper smooth connected analytic groups over $K$~-- the rigid analytic analogue of a complex torus.

\subsection{Motivation}
In this paper the universal cover of the universal vector extension of an abelian variety is made explicit. 
Recall that a vector  extension of an abelian variety $A$ over a field $k$ is a short exact sequence of algebraic  groups
\[ 0 \too \bbV(F) \too G \too A \too 0,\]
where $F$ is a finite-dimensional $k$-vector space and $\bbV(F)$ the vector group attached to it. Such an extension corresponds (up to isomorphism) to elements of the cohomology group $\rH^1(A, \cO_A) \otimes_K F$. Taking $F = \rH^1(A, \cO_A)^\vee$, the vector extension with isomorphism class $\id \in \End \rH^1(A, \cO_A)$,
\[  0 \too \bbV\left(\rH^1\left(A, \cO_A\right)^\vee \right) \too E(A) \too A \too 0, \]
is said to be the \emph{universal} one; it has the property that any other vector extension $G$ is obtained as the push-out of $E(A)$ along the linear map $\rH^1(A, \cO_A)^\vee \to F$ given by the isomorphism class of $G$. 

When $k = \bbC$, the uniformization of $E(A)$ admits a quite explicit description, which should hopefully clarify the statements in the rigid context. The exponential map $V:= \Lie A \to A(\bbC)$ is a universal cover of $A(\bbC)$. We identify the fundamental group $\pi_1(A(\bbC), 0)$ with the kernel of the exponential map $\Lambda \subseteq \Lie A$. Hodge theory permits one to see $\rH^1(A, \cO_A)^\vee$ as the complex vector space $\bar{V}$ conjugate to $V$; consequently, we have an inclusion $\theta_\Lambda \colon \Lambda \to \bar{V}$. Then,
\[ E(A)(\bbC) = (V \times \bar{V}) / \Lambda\]
 with $\Lambda$ embedded diagonally. The choice of a basis of $\Lambda$ induces a biholomorphism $E(A)(\bbC)  \cong (\bbC^\times)^{2g}$. In particular, the complex manifold $E(A)(\bbC)$  is Stein, meaning that it can be holomorphically embedded in $\bbC^n$ as a closed subspace for some $n \ge 0$. Such an embedding cannot be algebraic, as all algebraic functions on $E(A)$ are constant. In other words, $E(A)$ is an example of a non-affine algebraic variety whose associated complex space is Stein. 
 
 The motivation of the present paper comes from the study of the analogous question over $K$; see \cite{UniversalExtension}.

\subsection{Results} From now on let us work over $K$. To ease notation, the superscript `an' is dropped, and all algebraic varieties are treated as analytic spaces. Let
\[ 0 \too T \too E \too B \too 0\]
be the semi-abelian variety which is the universal cover of $A$ and $\Lambda \subseteq E(K)$ the fundamental group. In order to state the main result, it is necessary to also have at hand  the uniformization of the dual abelian variety $\check{A}$. See $B$ as the dual of its dual~$\check{B}$; then the group morphism $\Lambda \to B(K)$ defines a semi-abelian variety
\[ 0 \too \check{T} \too \check{E} \stackrel{\check{p}}{\too} \check{B} \too 0,\]
where $\check{T}$ is the $K$-torus with group of characters $\Lambda$. Let $\check{\Lambda}$ be the group of characters of the $K$-torus $T$. The datum of $\Lambda \subseteq E(K)$ induces an embedding of $\check{\Lambda} \subseteq \check{E}(K)$, and
\[\check{A}^\an = \check{E}^\an / \check{\Lambda}.\]
For an algebraic group $G$, let $\omega_G = (\Lie G)^\vee$ denote the dual of its Lie algebra. Since the quotient map $\check{E}^\an \to \check{A}^\an$ is \'etale, the spaces of invariant $1$-forms $\omega_{\check{E}}$ and $\omega_{\check{A}}$ are identified. The above short exact sequence of algebraic groups gives the following one: 
\[ 0 \too \omega_{\check{B}} \stackrel{\rd \check{p}}{\too} \omega_{\check{A}} \too \omega_{\check{T}} \too 0,\]
where $\rd \check{p}$ is the pull-back of $1$-forms along $\check{p}$. The first result concerns structure.

\begin{theoremintro} \label{Th:StructureUniversalCoverUnivExt} The universal cover $\tilde{E}(A)$ of\, $E(A)$ is contractible,~is the pull-back to~$\tilde{A}$ of\, $E(A)$ and is the push-out of\, $E(B) \times_B E$ along $\rd \check{p}$; i.e.\ there is a commutative and~exact diagram 
\[
\begin{tikzcd}
0 \ar[r] & \bbV\left(\omega_{\check{B}}\right) \ar[d, "\rd \check{p}"] \ar[r] & E(B) \times_B E \ar[r] \ar[d] & E \ar[r] \ar[d, equal] & 0 \\
0 \ar[r] & \bbV\left(\omega_{\check{A}}\right) \ar[r] & \tilde{E}(A) \ar[r] & E \ar[r] & 0\rlap{.}
\end{tikzcd}
\]
\end{theoremintro}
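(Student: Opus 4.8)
The plan is to separate the topological assertions (contractibility, the pull-back description) from the algebraic one (the push-out square), and then to reduce the latter to a single duality compatibility.

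\textbf{Step 1: covering and contractibility.} First I would note that $E(A)\to A$ is a torsor under the vector group $\bbV(\omega_{\check A})$. Pulling back along it the \'etale covering $\tilde A=E\to A$ (with deck group $\Lambda$) gives an \'etale covering $E(A)\times_A E\to E(A)$, and $E(A)\times_A E$ is a $\bbV(\omega_{\check A})$-torsor over $E$, hence connected. Since $E$ is contractible by L\"utkebohmert's uniformization of abeloid varieties, and a torsor under a vector group is a homotopy equivalence onto its base, $E(A)\times_A E$ is contractible, in particular simply connected; so $E(A)\times_A E\to E(A)$ is the universal cover. Thus $\tilde E(A)=E(A)\times_A E$, it is contractible, and it is tautologically the pull-back of $E(A)$ along $\tilde A\to A$. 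This also produces the right-hand column of the diagram and realizes the bottom row as the pull-back of $0\to\bbV(\omega_{\check A})\to E(A)\to A\to 0$ along $E\to A$.

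\textbf{Step 2: the push-out square.} Next I would identify $E(B)\times_B E$ with \emph{the} universal vector extension $E(E)$ of the semi-abelian variety $E$: it is the pull-back of $E(B)\to B$ along $E\to B$, hence a vector extension of $E$ by $\bbV(\omega_{\check B})$, and applying $\Ext(-,\bbG_a)$ to $0\to T\to E\to B\to 0$ together with $\Hom(T,\bbG_a)=\Ext^1(T,\bbG_a)=0$ shows that pull-back induces an isomorphism $\rH^1(B,\cO_B)=\Ext^1(B,\bbG_a)\cong\Ext^1(E,\bbG_a)$ carrying the class $\id$ of $E(B)$ to the class $\id$ of $E(B)\times_B E$. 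The universal property of $E(E)$, applied to the vector extension $0\to\bbV(\omega_{\check A})\to\tilde E(A)\to E\to 0$, then yields a unique homomorphism $\Phi\colon E(B)\times_B E\to\tilde E(A)$ over $\id_E$, restricting on kernels to the linear map $\phi\colon\omega_{\check B}=\Ext^1(E,\bbG_a)^\vee\to\omega_{\check A}$ that classifies $[\tilde E(A)]$. Any such $\Phi$ exhibits $\tilde E(A)$ as the push-out of $E(B)\times_B E$ along $\phi$ (the tautological arrow from the push-out to $\tilde E(A)$ is the identity on kernels and on cokernels, hence an isomorphism). The full commutative exact diagram is then in place, with left column $\phi$, and only $\phi=\rd\check p$ remains.

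\textbf{Step 3: $\phi=\rd\check p$.} Because $\tilde E(A)=E(A)\times_A E$ is the pull-back along $E\to A$ of the extension $E(A)$, of class $\id\in\End\rH^1(A,\cO_A)=\rH^1(A,\cO_A)\otimes\omega_{\check A}$, its class is the image of $\id$ under $(E\to A)^*\otimes\id_{\omega_{\check A}}$, and reading this in $\Hom\bigl(\Ext^1(E,\bbG_a)^\vee,\omega_{\check A}\bigr)$ shows $\phi$ is the transpose of $(E\to A)^*\colon\rH^1(A,\cO_A)\to\Ext^1(E,\bbG_a)$. So, modulo the identifications of Step 2 and $\rH^1(A,\cO_A)=\Lie\check A=\Lie\check E$, $\Ext^1(E,\bbG_a)=\Ext^1(B,\bbG_a)=\rH^1(B,\cO_B)=\Lie\check B$, the equality $\phi=\rd\check p$ is equivalent to $(E\to A)^*=\Lie\check p=(\rd\check p)^\vee$. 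This I would deduce from the construction of the dual uniformization: $\rH^1(A,\cO_A)$ is the Lie algebra of $\check A=\check E/\check\Lambda$, hence equals $\Lie\check E$; pull-back of $\bbG_a$-extensions along $E\to A$, transported along $E\to B$, is identified with $\Lie\check p\colon\Lie\check E\to\Lie\check B$ via the $1$-motive duality relating the period map $E\to A$ to the embedding $\check\Lambda\subseteq\check E(K)$; correspondingly the kernel $\Hom(\Lambda,K)$ of $(E\to A)^*$ maps onto $\Lie\check T=\ker\Lie\check p$. Transposing gives exactly $0\to\omega_{\check B}\xrightarrow{\rd\check p}\omega_{\check A}\to\omega_{\check T}\to 0$, which is $\phi$.

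\textbf{Main obstacle.} Steps 1 and 2 are essentially formal once L\"utkebohmert's uniformization and the homotopy-theoretic behaviour of vector-group torsors over Berkovich spaces are available. The real work is Step 3 — matching the pull-back on $\rH^1(\,\cdot\,,\cO)$ (equivalently $\Ext(-,\bbG_a)$) along the period map $E\to A$ with the structure morphism $\check p\colon\check E\to\check B$ of the dual uniformization. This is a bi-extension / $1$-motive duality computation, and how transparent it is depends on the exact set-up of the dual uniformization (the embedding $\check\Lambda\subseteq\check E(K)$ and the identity $\check A=\check E/\check\Lambda$) recalled earlier in the paper; I would organize the argument so that the previously established compatibilities carry out this bookkeeping.
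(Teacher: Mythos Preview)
Your Step~2 contains exactly the gap the paper singles out as the reason the direct approach fails. You apply the universal property of $E(E)=E(B)\times_B E$ to the extension $0\to\bbV(\omega_{\check A})\to\tilde E(A)\to E\to 0$, but that universal property classifies \emph{algebraic} vector extensions of $E$ (your own computation of $\Ext^1(E,\bbG_a)$ via $0\to T\to E\to B\to 0$ is an algebraic one). The extension $\tilde E(A)=E(A)\times_A E$ is only given as an \emph{analytic} extension of the non-proper analytic group $E$, and there is no a~priori comparison between analytic and algebraic vector extensions of such an $E$; equivalently, one does not know in advance that the class of $\tilde E(A)$ lies in the image of $\Hom(\omega_{\check B},\omega_{\check A})\to\Ext^1_{\mathrm{an}}(E,\bbV(\omega_{\check A}))$. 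The paper states this explicitly in the ``Content of the paper'' paragraph and circumvents it by reversing the logic: it works with the canonical extension $(\cU_A)$ (equivalently $A^\natural$, the moduli of rank~$1$ connections), \emph{defines} $(\cU_E)$ as the push-out of $p^*(\cU_B)$ along $\rd\check p$, and then proves directly the isomorphism $u^*\cU_A\cong\cU_E$ by comparing restrictions of the two Poincar\'e bundles $\cL_A$ and $\cL_B$ to first-order thickenings (\S4.1.3). Thus the push-out description is built in, and the pull-back description is the content of that isomorphism; no universal property is invoked.

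Your Step~1 also leans on a claim that requires justification in the Berkovich setting: that a torsor under a vector group is a homotopy equivalence onto its base. This is not a standard fact for non-Archimedean analytic spaces, and the paper does not argue this way. Instead, contractibility of $E^\natural$ is obtained by exhibiting it as the complement of a Cartier divisor in the Raynaud generic fiber of an explicit smooth connected admissible formal $R$-scheme, and then invoking Berkovich's deformation retraction onto the skeleton (Lemma~4.13 and Proposition~4.14). Only after contractibility is established does $E^\natural\to A^\natural$ become literally the universal cover. So both of your ``formal'' steps actually hide the substantive work; the paper's route through the canonical extension and formal models is what supplies it.
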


In the statement the isomorphisms $\omega_{\check{A}} \cong \rH^1(A, \cO_A)^\vee$ and $\omega_{\check{B}} \cong \rH^1(B, \cO_B)^\vee$ are understood. The fundamental group $\pi_1(E(A), 0)$ of $E(A)$ with base-point $0$ can be identified with a subgroup of (the $K$-rational points of) its universal cover $\tilde{E}(A)$. The projection $\tilde{E}(A) \to E$ given by Theorem~\ref{Th:StructureUniversalCoverUnivExt} then induces a map
\[ \phi \colon \pi_1(E(A), 0) \too \Lambda.\]
In order to understand how $\pi_1(E(A), 0)$ sits inside $\tilde{E}(A)$, note that Theorem~\ref{Th:StructureUniversalCoverUnivExt} gives an isomorphism $\coker(E(B) \times_B E \to \tilde{E}(A)) \cong \bbV(\omega_{\check{T}})$. Let
\[ \pr_u \colon \tilde{E}(A) \too \bbV\left(\omega_{\check{T}}\right)\]
be the induced projection. The image of $\pi_1(E(A), 0)$ is described by means of the \emph{universal vector hull} of the group $\Lambda$, that is, the $K$-linear map $\theta_\Lambda \colon \Lambda \to \omega_{\check{T}}$ defined as follows. By definition, $\check{T}$ is the $K$-torus with group of characters $\Lambda$. Seeing $\chi \in \Lambda$ as a character $\chi \colon \check{T} \to \Gm$, set
\[ \theta_\Lambda(\chi) := \chi^\ast \tfrac{\rd z}{z} \in \omega_{\check{T}},\]
where $z$ is the coordinate function on $\Gm$.

\begin{theoremintro}\label{Th:UniversalCoverUnipotentQuotient}  The map $\phi \colon \pi_1(E(A), 0) \to \Lambda$ is an isomorphism, and
\[ \pr_u{} \circ \phi^{-1} = \theta_\Lambda.\]
\end{theoremintro}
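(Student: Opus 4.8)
The plan is to exploit the functoriality of the universal cover together with the concrete diagram furnished by Theorem~\ref{Th:StructureUniversalCoverUnivExt}. First I would observe that $\phi$ is automatically injective: the quotient map $E(A) \to A$ fits into a morphism of universal covers $\tilde{E}(A) \to \tilde{A}$ compatible with $\pi_1(E(A),0) \to \pi_1(A,0) = \Lambda$, and by Theorem~\ref{Th:StructureUniversalCoverUnivExt} the composite $\tilde{E}(A) \to \tilde{A}$ with the chosen presentation is exactly the projection $\tilde{E}(A) \to E$ (up to the covering $E \to \tilde A$ —here one uses that $E$ is already simply connected, being the universal cover of $A$). Since $\pi_1(E(A),0) \to \pi_1(E(A),0)$ is an isomorphism and the deck action on $\tilde{E}(A)$ is faithful, $\phi$ must be injective; surjectivity will come out of the identification with $\theta_\Lambda$ below, once we know $\theta_\Lambda$ is defined on all of $\Lambda$.

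Next I would pin down the composite $\pr_u \circ \phi^{-1}$. Because $\tilde{E}(A)$ is contractible, the deck group is identified with $\pi_1(E(A),0)$ sitting inside $\tilde{E}(A)(K)$, and the quotient $\bbV(\omega_{\check T}) = \coker(E(B)\times_B E \to \tilde{E}(A))$ carries the induced lattice $\pr_u(\pi_1)$. The key point is that this cokernel, and the map $\pr_u$, are the ones obtained by dualizing the uniformization of $\check A = \check E/\check\Lambda$: indeed $\omega_{\check T}$ sits in $0 \to \omega_{\check B} \to \omega_{\check A} \to \omega_{\check T} \to 0$, and the torus $\check T$ has character group $\Lambda$. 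Concretely, $\check T = \bbV(\omega_{\check T})/\pr_u(\pi_1)$ as a rigid group: the universal cover of a torus $\check T$ with character lattice $\Lambda$ is $\bbV(\Lie \check T) = \bbV(\omega_{\check T}^\vee)$ with deck lattice... — here I must be careful: the map $\theta_\Lambda(\chi) = \chi^\ast(\rd z/z)$ lands in $\omega_{\check T}$, so the relevant statement is that the universal cover of $\check T^\an$ is $\bbV(\Lie\check T)$ and the period lattice, transported via the invariant-forms pairing into $\omega_{\check T}$, is precisely $\theta_\Lambda(\Lambda)$. I would therefore reduce the theorem to the torus case: show that for a split $K$-torus with character group $\Lambda$, the composite $\Lambda = \pi_1(\text{torus}) \hookrightarrow \text{universal cover} \to \omega_{\check T}$ is $\chi \mapsto \chi^\ast(\rd z/z)$. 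This is a direct computation on $\Gm^\an$: its universal cover is $\Gm^\an$ again, with deck group $q^{\bbZ}$; tracking the logarithmic derivative $\rd z/z$ of the coordinate through $\chi$ gives the formula, and one extends $\bbZ$-linearly in $\chi$.

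The remaining—and main—step is to verify the compatibility that makes $\pr_u$ on $\tilde{E}(A)$ restrict on the deck lattice to this torus-period map composed with $\phi^{-1}$. Here the input is Theorem~\ref{Th:StructureUniversalCoverUnivExt}: the fact that $\tilde{E}(A)$ is the push-out of $E(B)\times_B E$ along $\rd\check p$ means that the $\omega_{\check T}$-coordinate of a deck transformation of $\tilde{E}(A)$ is computed by pushing the corresponding element of $\Lambda$ through the uniformization data of $\check E$ and then applying the cokernel map $\omega_{\check A} \to \omega_{\check T}$. Since the embedding $\check\Lambda \subseteq \check E(K)$ and hence the whole dual picture is built from $\Lambda \subseteq E(K)$, unwinding these identifications should show that the $\omega_{\check T}$-component of the deck transformation attached to $\chi \in \Lambda$ is exactly $\theta_\Lambda(\chi)$. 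I expect the principal obstacle to be bookkeeping: matching the two natural identifications of $\omega_{\check T}$ (one from the algebraic exact sequence $0 \to \omega_{\check B} \to \omega_{\check A} \to \omega_{\check T} \to 0$, one from the character group of $\check T$ being $\Lambda$) and checking that the sign/normalization in $\theta_\Lambda(\chi) = \chi^\ast(\rd z/z)$ is the one dictated by Theorem~\ref{Th:StructureUniversalCoverUnivExt}, rather than its inverse. Once the diagram is set up correctly, the equality $\pr_u \circ \phi^{-1} = \theta_\Lambda$ —and with it the surjectivity of $\phi$, since $\theta_\Lambda$ is defined on all of $\Lambda$ and the rest of the deck group is already captured by $E(B)\times_B E$— follows formally.
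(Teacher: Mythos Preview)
Your argument for the isomorphism part is essentially right in spirit (once Theorem~\ref{Th:StructureUniversalCoverUnivExt} identifies $\tilde E(A)$ with the pull-back of $E(A)$ along $E\to A$, the map $\phi$ is the induced isomorphism $\pi_1(E(A),0)\to\pi_1(A,0)=\Lambda$), though the phrasing is tangled and the sentence ``$\pi_1(E(A),0)\to\pi_1(E(A),0)$ is an isomorphism'' is circular as written.

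The real problem is the second half. Your ``reduction to the torus case'' rests on claims that are false in the non-Archimedean setting. A split $K$-torus has \emph{contractible} Berkovich space (it retracts onto its skeleton $\bbR^n$), so $\pi_1(\check T^{\an})=0$; there is no period lattice for a torus here. In particular the sentence ``its universal cover is $\Gm^{\an}$ again, with deck group $q^{\bbZ}$'' is self-contradictory, and the assertion $\check T=\bbV(\omega_{\check T})/\pr_u(\pi_1)$ cannot hold: a vector group modulo a discrete lattice is never an algebraic torus over $K$. You are importing the complex-analytic exponential picture, which simply does not exist $p$-adically. Consequently there is no ``torus period map'' to compare $\pr_u$ with, and the proposed unwinding in the last paragraph has nothing to unwind.

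What the paper does instead is entirely explicit and avoids covering-space heuristics for the formula. The universal cover $E^\natural$ is built as the affine bundle attached to the canonical extension $\cU_E=\pi_{E\ast}\iota_E^\ast\cL_E$, and the embedding $\Lambda\hookrightarrow E^\natural(K)$ is \emph{defined} by sending $\chi$ to the section $\chi^\natural:=\langle\chi,\check\jmath\,\rangle$, where $\check\jmath\colon\check E_1\to\check E$ is the first-order thickening at the identity. That this is the deck-group embedding is checked by analysing the $\Lambda_S$-linearization of $\cU_E$ (\Cref{Prop:LinearizationCanonicalExtension}, \Cref{Prop:UniversalCoverUniversalVectorExtension}). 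The identity $\pr_u(\chi^\natural)=\theta_\Lambda(\chi)$ is then a jet computation on the toric bundle $\check E\to\check B$ (\Cref{Prop:InfinitesimalSectionToricBundle}): one reduces to $\Lambda=\bbZ$, $X=S$, $L=\cO_S$, and reads off that the class of the coordinate $z$ in $\cO_S[z]/(z-1)^2$ projects to $\tfrac{\rd z}{z}$. The differential form $\chi^\ast\tfrac{\rd z}{z}$ thus arises from infinitesimal geometry, not from any lattice in a universal cover of $\check T$.
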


Concretely, when the abelian variety $A$ has good reduction, the above results simply say that $E(A)$ is contractible, so that it coincides with its universal cover. In the extremely opposite situation, when $A = T / \Lambda$ has totally degenerate reduction, the result is more interesting and gives the following description of $E(A)$:
\[ E(A) = \left(T \times \bbV\left(\omega_{\check{T}}\right)\right) / \{ \left(\chi, \theta_{\Lambda}(\chi)\right) : \chi \in \Lambda\}. \]
In the framework of $1$-motives, Theorems~\ref{Th:StructureUniversalCoverUnivExt} and~\ref{Th:UniversalCoverUnipotentQuotient} say that $\tilde{E}(A)$ is the universal vector extension of the $1$-motive $M = [\Lambda \to E]$; see \cite[Section~1.4]{BVS} and \cite[Section~2.2]{Ber}.

\subsection{Content of the paper} In order to show  Theorems~\ref{Th:StructureUniversalCoverUnivExt} and~\ref{Th:UniversalCoverUnipotentQuotient}, the natural approach would be to compare $\tilde{E}(A)$ with the universal vector extension $M^\natural$ of the $1$-motive $M = [\Lambda \to E]$. However, $\tilde{E}(A)$ is an \emph{analytic} vector extension of $E$, and since $E$ is not proper, it is not clear \textit{a priori} why it should be an \emph{algebraic} one. Therefore, we cannot apply the universal property of $M^\natural$ directly. Instead, we will proceed by making an explicit construction of $E(A)$ giving the main results above as a byproduct. Unfortunately, the universal property of the universal vector extension does not say much about how $E(A)$ is constructed. It is instead more insightful to look at the moduli space $A^\natural$ of translation-invariant line bundles on $A$ endowed with a (necessarily integrable) connection~-- such a moduli space is canonically isomorphic to the universal vector extension. The construction of $A^\natural$ presented here (see \Cref{Sec:UniversalExtensionAbelianScheme}), although quite natural, seemingly does not appear in the literature; it is inspired from \cite{Biswas}, even though the Hodge-theoretical reasoning therein had to be circumvented. Such a definition for $A^\natural$ has several advantages. First, it translates right away to the rigid analytic framework for abeloid varieties. Second, its explicit nature permits one to perform the necessary computations (see notably \Cref{Prop:LinearizationCanonicalExtension}, \Cref{Prop:UniversalCoverUniversalVectorExtension} and \Cref{Thm:LinearizationAndUniversalVectorHull}). Third, it allows one to determine the canonical linear isomorphism through which $A^\natural$ is obtained by push-out from $E(A)$ (see \Cref{Thm:CanonicalIsomorphismUniversalVectorExtensionModuliSpaceConnections}).\footnote{This formula, perhaps because of its what-else-could-it-be nature, is missing in the literature. Mazur and Messing prove the canonical isomorphism is functorial in $A$ (see \cite[Proposition 2.6.7]{MazurMessing}), but its explicit form  is missing. This gap was already pointed out by Crew (see \cite[Introduction]{CrewCompositio}), but in the description he proposes (\emph{op.cit.}, Theorem 2.7), he does not determine such a linear map.} The `universal cover' of $A^\natural$ is then defined by hand (see \Cref{sec:UniversalCoverUniversalVectorExtension}) and only ultimately shown to be contractible (see \Cref{Prop:UniversalCoverContractible}), so that it is literally the universal cover of $A^\natural$.

\subsection{Conventions}Let $X$ be a locally ringed space and
\begin{equation} \tag{$F$}
\cdots \too F_{i - 1} \too F_i \too F_{i + 1} \too \cdots
\end{equation}
a sequence of $\cO_{X}$-modules indexed by integers. For morphisms of locally ringed spaces $f \colon Y \to X$ and $g \colon X \to Z$, and an $\cO_X$-module $M$, let $f^\ast (F)$, $g_\ast (F)$, $(F) \otimes M$ denote the sequences obtained from $(F)$ by, respectively, pulling back along $f$, pushing forward along $g$ and taking the tensor product with $M$.

Let $K$ be a complete non-trivially valued non-Archimedean  field. In this paper $K$-analytic spaces are considered in the sense of Berkovich (see \cite{BerkovichIHES}). 
By abuse of notation, given a $K$-analytic space $X$, an $\cO_X$-module here is what is called an $\cO_{X_{\rG}}$-module in \emph{op.cit.} As soon as the $K$-analytic space $X$ is good (that is, every point admits an affinoid neighbourhood), the  two notions coincide (\emph{op.cit.}, Proposition 1.3.4). For a $K$-analytic space $S$, an $S$-analytic space in groups will be called simply an $S$-analytic group. An \emph{abeloid variety} over $S$ is a proper, smooth $S$-analytic group with connected fibers.

\section{Moduli of rank 1 connections and universal extension of an abelian scheme} \label{Sec:UniversalExtensionAbelianScheme}

Let $S$ be a scheme, $\alpha \colon A \to S$ an abelian scheme, $e \colon S \to A$ the zero section and $\omega_{A} = e^\ast \Omega_{A/S}^1$. Let $\mu, \pr_1, \pr_2 \colon A \times_S A \to A$ be, respectively, the group law, the first  and the second projections.

\subsection{Connections on homogeneous line bundles} \label{Sec:ConnectionsHomogeneousLineBundles}
A \emph{homogeneous line bundle} on $A$ is the datum of a line bundle on $A$ together with an isomorphism of $\cO_A$-modules $\phi \colon  \pr_1^\ast L \otimes \pr_2^\ast L \to \mu^\ast L$. The isomorphism $\phi$ is called \emph{rigidification}.

\begin{remark} \label{Rmk:RigidificationAtNeutralSection} The isomorphism $(e, e)^\ast \phi \colon e^\ast L \otimes e^\ast L \to e^\ast L$ induces a trivialization $u \colon \cO_S \to e^\ast L$ of the line bundle $e^\ast L$ on $S$.
\end{remark}

Let $\Delta_1$ be the first-order thickening of $A \times_S A$ along the diagonal, $p_i \colon \Delta_1 \to A$ the $\supth{i}$ projection  for $i = 1, 2$, $A_1$ the first-order thickening of $A$ along $e$, $\iota \colon A_1 \to A$ the closed immersion, $\pi \colon A_1 \to S$ the structural morphism and $\tau \colon A_1 \to \Delta_1$ the morphism defined by $p_{1} \circ \tau = x \circ \pi$ and $p_{2} \circ \tau = \iota$.  Following Grothendieck, a connection on $L$ is an isomorphism of $\cO_{\Delta_1}$-modules $\nabla \colon p_1^\ast L \to p_2^\ast L$ whose restriction to $A$ is the identity. Similarly, an infinitesimal rigidification\footnote{Mazur--Messing call an `infinitesimal rigidification' simply a `rigidification'. Here, the adjective `infinitesimal' is added in order to distinguish the concept from that of a rigidification of a homogeneous line bundle.} at $e$ is an isomorphism of $\cO_{A_1}$-modules $\rho \colon \pi^\ast e^\ast \to \iota^\ast L$ whose restriction to the zero section is the identity. (Refer to Sections~\ref{sec:AtiyahExtensions} and~\ref{sec:InfinitesimalRigidifications}
for basics on connections and infinitesimal rigidifications.) In particular, for a connection $\nabla$ on $L$, $\tau^\ast \nabla$ is an infinitesimal rigidification of $L$ at $e$.

\begin{proposition} \label{Lemma:ConnectionsAndRigidifications}  For a homogeneous line bundle $L$ on $A$, we have a bijection
\[ \left\{ \textup{connections on $L$} \right\} \too
\left\{
\begin{array}{c}
\textup{infinitesimal} \\
\textup{rigidifications of $L$ at $e$} 
\end{array} \right\}, \quad \nabla \longmapsto \tau^\ast \nabla.
 \]
\end{proposition}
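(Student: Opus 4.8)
The plan is to exhibit an inverse to the map $\nabla \mapsto \tau^\ast\nabla$, using in an essential way that $L$ carries a homogeneous structure (rigidification $\phi$). The key point is that a connection on a line bundle on a group scheme is equivalent to a translation-invariance datum, and such a datum is in turn determined by infinitesimal information along the identity section.

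First I would recall the translation interpretation. For a homogeneous line bundle $(L,\phi)$, the rigidification $\phi\colon \pr_1^\ast L\otimes \pr_2^\ast L\to\mu^\ast L$ says that pulling back $L$ along translation by a point is (canonically, in families) isomorphic to $L$ twisted by a line bundle on the base, so $L$ is "translation-invariant". Concretely, restrict $\phi$ over $\Delta_1$: let $q_1\colon \Delta_1\to A$ be the composite $A\times_S A\xrightarrow{\pr_1}A$ restricted, and note that the two projections $p_1,p_2\colon\Delta_1\to A$ together with the difference map give, over the first-order neighbourhood of the diagonal, exactly the data needed to compare $p_1^\ast L$ with $p_2^\ast L$. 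The homogeneity isomorphism $\phi$ produces an isomorphism $p_1^\ast L\otimes (\text{difference})^\ast L\to p_2^\ast L$ over $\Delta_1$; since the difference map factors through the first-order thickening $A_1$ of $A$ along $e$, the factor $(\text{difference})^\ast L$ is identified with the pull-back of $\iota^\ast L$, i.e.\ with the line bundle $A_1$ which an infinitesimal rigidification $\rho$ trivializes against $\pi^\ast e^\ast L$. Hence each $\rho$ yields, via $\phi$, an isomorphism $p_1^\ast L\to p_2^\ast L$ over $\Delta_1$; one checks that its restriction to $A$ (the diagonal) is the identity because $\rho$ restricts to the identity on $e$ and $\phi$ is compatible with the trivialization $u$ of Remark~\ref{Rmk:RigidificationAtNeutralSection}. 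This defines the candidate inverse $\rho\mapsto \nabla_\rho$.

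Next I would check the two compositions are identities. That $\tau^\ast\nabla_\rho=\rho$ follows by pulling the construction back along $\tau\colon A_1\to\Delta_1$: by the defining relations $p_1\circ\tau=x\circ\pi$, $p_2\circ\tau=\iota$, pulling the $\phi$-twisted isomorphism back along $\tau$ collapses the "difference" factor to the original $A_1$-line bundle, and one recovers $\rho$ on the nose — this is really just unwinding that $\tau$ is the section of the difference map over $A_1$. For the other composition, starting from a connection $\nabla$, one must show $\nabla_{\tau^\ast\nabla}=\nabla$. Here the homogeneity is used again: a connection on a homogeneous line bundle is automatically compatible with the rigidification $\phi$ (any two connections differ by a global $1$-form, and on an abelian scheme invariant $1$-forms are the only ones that preserve homogeneity — more precisely, the connection built from $\phi$ and $\tau^\ast\nabla$ agrees with $\nabla$ because both restrict to $\tau^\ast\nabla$ along $\tau$ and the comparison isomorphism over $\Delta_1$ is rigid, being a first-order datum determined by its pull-back along the family of translations). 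I would make this precise by working fppf-locally on $S$ so $e^\ast L\cong\cO_S$, reducing to the statement that a connection on a homogeneous line bundle is the same as its germ along $e$.

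The main obstacle I expect is the last verification — that the connection reconstructed from $\phi$ and $\tau^\ast\nabla$ equals the original $\nabla$ — which amounts to showing a connection on a homogeneous line bundle is \emph{uniquely} determined by its associated infinitesimal rigidification. The clean way is to phrase everything in terms of Atiyah extensions (Section~\ref{sec:AtiyahExtensions}): the homogeneous structure splits the Atiyah extension of $L$ into "invariant" and "infinitesimal" parts, a connection is a splitting, and the restriction-to-$A_1$ map on splittings is then visibly bijective because the map of extensions $\pi^\ast\mathrm{At}(e^\ast L)\to \mathrm{At}(L)|_{A_1}$ is an isomorphism — this is exactly where $A$ being a \emph{group} (with invariant differentials) does the work. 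One should be careful that the bijection is canonical and not merely an isomorphism of torsors, so I would track the identity-restriction conditions on both sides throughout rather than just counting dimensions.
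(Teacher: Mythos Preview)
Your construction of the inverse map $\rho \mapsto \nabla_\rho$ is exactly the one in the paper: the difference $p_2 - p_1 \colon \Delta_1 \to A$ factors through a map $\eta \colon \Delta_1 \to A_1$, and the rigidification $\phi$ pulled back along $(e\circ\pi\circ\eta, p_1)$ and $(p_2 - p_1, p_1)$ combines with $\eta^\ast\rho$ to produce $\nabla$. Your check that $\tau^\ast\nabla_\rho = \rho$ via $\eta\circ\tau = \id$ is likewise the paper's argument.

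Where you diverge is in the other direction. You frame it as verifying $\nabla_{\tau^\ast\nabla} = \nabla$ and propose machinery with Atiyah extensions; the paper instead proves injectivity of $\nabla \mapsto \tau^\ast\nabla$ directly, which is a one-line torsor argument: two connections on $L$ differ by a global section of $\Omega^1_{A/S}$, two infinitesimal rigidifications differ by a section of $\omega_A$, and the evaluation-at-$e$ map $\alpha_\ast\Omega^1_{A/S} \to \omega_A$ is an isomorphism because $A$ is an abelian scheme. That is all that is needed; once injectivity is in hand, the surjectivity construction together with $\tau^\ast\nabla_\rho = \rho$ finishes the proof without ever needing to check $\nabla_{\tau^\ast\nabla} = \nabla$ separately. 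Your Atiyah-extension discussion is not wrong in spirit, but the specific claim that ``$\pi^\ast\mathrm{At}(e^\ast L) \to \mathrm{At}(L)|_{A_1}$ is an isomorphism'' is not the right formulation and obscures the simple point that invariant $1$-forms on an abelian scheme are determined by their value at the origin.
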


\begin{proof} The argument is borrowed from (2) and (3) in the proof of \cite[Proposition 3.2.3, pp.~39--40]{MazurMessing}.

\textit{Injectivity.}~ Let $\nabla, \nabla' \colon p_1^\ast L \to p_2^\ast L$ be connections on $L$. Since both isomorphisms $\nabla$ and $\nabla'$ are the identity when restricted to the diagonal, they differ by an homomorphism of $\cO_X$-modules $L \to \Omega^1_{\alpha} \otimes L$. The latter is equivalent to the datum of a global section $\omega$ of $\Omega^1_{\alpha}$ as $L$ is a line bundle. On the other hand, the evaluation at $e$ homomorphism $\epsilon  \colon \alpha_\ast \Omega^1_\alpha \to \omega_A$ is also an isomorphism. Therefore, the isomorphisms $\tau^\ast \nabla$ and $\tau^\ast \nabla'$ differ by $\epsilon(\omega)$, which is $0$ if and only if $\omega$ is.

\textit{Surjectivity.}~ Let $\rho \colon \pi^\ast e^\ast L \to \iota^\ast L$ be an infinitesimal rigidification of $L$. The morphism $p_2 - p_1 \colon \Delta_1 \to A$ factors  through the closed immersion $\iota \colon A_1 \to A$. Indeed, when restricted to the diagonal, the map $p_2 - p_1$ has constant value $e$. As $A_1$ (resp.\ $\Delta_1$) is the first-order thickening of $A$ along $e$ (resp.\ of $X \times_S X$ along the diagonal $\Delta$), the morphism $p_2 - p_1$ induces a morphism $\eta \colon \Delta_1 \to A_1$ between first-order thickenings such that $\iota \circ \eta = p_2 - p_1$. The pull-back of the rigidification $\phi$ along the morphisms of $S$-schemes 
$  (e \circ \pi \circ \eta, p_1), (p_2 - p_1, p_1) \colon \Delta_1 \to A \times A$
furnishes the following isomorphisms of line bundles over $\Delta_1$: 
\begin{align*} 
(e \circ \pi \circ \eta, p_1)^\ast \phi \colon \eta^\ast \pi^\ast e^\ast L \otimes p_1^\ast L &\too p_1^\ast  L, \\
(p_2 - p_1, p_1)^\ast \phi \colon  (p_2 - p_1)^\ast L \otimes p_1^\ast L &\too p_2^\ast L.
 \end{align*}
On the other hand, taking the tensor product of $\eta^\ast \rho$ with $p_1^\ast L$  gives rise the isomorphism of $\cO_{\Delta_1}$-modules
\[ \eta^\ast \rho \otimes \id \colon \eta^\ast \pi^\ast e^\ast L \otimes p_1^\ast L \too (p_2 - p_1)^\ast L \otimes p_1^\ast L.\]
Consider the unique homomorphism of $\cO_{\Delta_1}$-modules $\nabla \colon p_1^\ast L \to p_2^\ast L$ making the following diagram commutative:
\begin{equation} \label{eq:DiagramDefiningConnectionFromRigidifcation}
\begin{tikzcd}[column sep=35pt]
\eta^\ast \pi^\ast e^\ast L \otimes p_1^\ast L \ar[r, "{\eta^\ast \rho \otimes \id}"]  \ar[d, "{(e \circ \pi \circ \eta, p_1)^\ast \phi}", swap] & (p_2 - p_1)^\ast L \otimes p_1^\ast L \ar[d, "{(p_2 - p_1, p_1)^\ast \phi}"] \\
p_1^\ast  L \ar[r, "\nabla"] & p_2^\ast L\rlap{.} 
\end{tikzcd}
\end{equation}
To conclude, one has to show that the infinitesimal rigidification $\tau^\ast \nabla$ is $\rho$. Notice that the endomorphism $\eta \circ \tau$ of the $S$-scheme $A_1$ is the identity. Indeed, since $\iota$ is a closed immersion (thus a monomorphism of schemes), it suffices to show the equality $\iota \circ \eta \circ \tau = \iota$. But $ \iota \circ \eta \circ \tau = (p_2 - p_1) \circ \tau = \iota - e \circ \pi = \iota$ as $A_1$-valued points of $A$ because $e \circ \pi$ is the neutral element of the group $A(A_1)$. Now, pulling back the diagram \eqref{eq:DiagramDefiningConnectionFromRigidifcation} along $\tau$ gives the following commutative diagram of $\cO_{A_1}$-modules:
\[ 
\begin{tikzcd}[column sep=35pt]
\pi^\ast e^\ast L \ar[r, "\tau^\ast \nabla"]\ar[d, "{\id \otimes \pi^\ast u}", swap] & \iota^\ast L \ar[d, "\id \otimes \pi^\ast u"] \\
\pi^\ast e^\ast L \otimes \pi^\ast e^\ast L \ar[r, "{\rho \otimes \id}"]  & \iota^\ast L \otimes \pi^\ast e^\ast L\rlap{,}
\end{tikzcd}
\]
where $u \colon \cO_S \to e^\ast L$ is the trivialization introduced in \Cref{Rmk:RigidificationAtNeutralSection}. (Here we used the equalities  $p_{1} \circ \tau = x \circ \pi$ and $p_{2} \circ \tau = \iota$ holding by the definition of $\tau$.) The equality $\tau^\ast \nabla = \rho$ follows. This concludes the proof.
\end{proof}

For a homogeneous line bundle $L$ on $A$, let
\begin{equation}
\tag{$\At_{A/S}(L)$} 0 \too \Omega^1_{A/S} \too \At_{A/S}(L) \too \cO_A \too 0
\end{equation}
be its Atiyah extension (see \Cref{sec:AtiyahExtensions}). Recall that connections on $L$ correspond to splittings of the Atiyah extension of $L$.

\begin{proposition} \label{Prop:VanishingAtiyahClass}
  Let $(L, \phi)$ be a homogeneous line bundle on $A$.
\begin{enumerate}
\item\label{p:vac-1} If the cohomology group $\rH^1(S, \omega_A)$ vanishes, then the line bundle $L$ admits a connection.
\item\label{p:vac-2} The following sequence of\, $\cO_S$-modules is short exact:
\begin{equation}
\tag*{$\alpha_\ast (\At_{A/S}(L))$}
0 \too \alpha_\ast \Omega^1_{A/S} \too \alpha_\ast \At_{A/S}(L) \too \alpha_\ast \cO_A \too 0.
\end{equation}
\item\label{p:vac-3} The homomorphism $\alpha_\ast \alpha^\ast (\At_{A/S}(L)) \to (\At_{A/S}(L))$ of short exact sequences of\, $\cO_A$-modules obtained by adjunction is an isomorphism.
\end{enumerate} 
\end{proposition}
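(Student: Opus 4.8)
The plan is to treat the three items essentially in order, since each feeds the next. For \ref{p:vac-1}, recall that connections on $L$ correspond to splittings of the Atiyah extension $\At_{A/S}(L)$, and by \Cref{Lemma:ConnectionsAndRigidifications} also to infinitesimal rigidifications of $L$ at $e$. An infinitesimal rigidification is an isomorphism $\pi^\ast e^\ast L \to \iota^\ast L$ of $\cO_{A_1}$-modules restricting to the identity along the zero section. Since $A_1$ is a first-order thickening of $S$ with ideal $\omega_A$ (pulled back along $\pi$), the obstruction to extending the given trivialization $u$ of $e^\ast L$ to $A_1$ is a class in $\rH^1(A_1,\cO_{A_1}^\times)$ whose "linearization" lives in $\rH^1(S,\omega_A)$; more precisely the set of such extensions, when nonempty, is a torsor under $\rH^0(S,\omega_A)$ (this matches the injectivity computation in \Cref{Lemma:ConnectionsAndRigidifications}, where the difference of two connections is measured by $\epsilon(\omega)$ with $\omega\in\rH^0(S,\Omega^1_\alpha)$). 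So the vanishing of $\rH^1(S,\omega_A)$ kills the obstruction and produces a rigidification, hence a connection. Concretely, I would write the exact sequence of sheaves on $A_1$ relating $\cO_{A_1}^\times$, $\cO_A^\times = \cO_{S}^\times$ (via $\iota$), and the additive sheaf $\pi^\ast\omega_A$, take the long exact cohomology sequence, and observe that $\rH^1(S,\pi^\ast\omega_A) = \rH^1(S,\omega_A) = 0$ lets us lift $u$.

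For \ref{p:vac-2}, the point is right-exactness, i.e.\ surjectivity of $\alpha_\ast\At_{A/S}(L)\to\alpha_\ast\cO_A$; left-exactness and exactness in the middle are automatic since $\alpha_\ast$ is left exact. The cokernel of $\alpha_\ast\At_{A/S}(L)\to\alpha_\ast\cO_A$ injects into $\rR^1\alpha_\ast\Omega^1_{A/S}$, and the image of $1\in\alpha_\ast\cO_A=\cO_S$ is exactly the Atiyah class of $L$ in $\rR^1\alpha_\ast\Omega^1_{A/S}$ (equivalently, the obstruction to a connection). I want to show this class vanishes. This follows from \ref{p:vac-1} applied after base change: it suffices to check vanishing locally on $S$, and over any affine open $U\subseteq S$ one has $\rH^1(U,\omega_A)$... but this need not vanish in general. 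The cleaner route is to use homogeneity directly: the rigidification $\phi$ gives, after pulling back along the maps $(e\circ\pi\circ\eta,p_1)$ and $(p_2-p_1,p_1)$ as in the proof of \Cref{Lemma:ConnectionsAndRigidifications}, a canonical way to build a connection out of \emph{any} infinitesimal rigidification, and the formation of the Atiyah class commutes with the arbitrary base change $S'\to S$. Taking $S'$ to be an affine cover refines nothing, so instead I would argue that the Atiyah class of a homogeneous line bundle is translation-invariant, hence (being a section of $\rR^1\alpha_\ast\Omega^1_{A/S}$, a sheaf on $S$) is determined by its restriction to the zero section, where it visibly vanishes because $e^\ast L$ is trivialized compatibly with $\phi$. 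This gives surjectivity. I expect this step — pinning down why the Atiyah class of a homogeneous bundle dies — to be the main obstacle, and the homogeneity/translation-invariance argument is the key idea: one shows that $\mu^\ast\At_{A/S}(L)$ and $\pr_1^\ast\At_{A/S}(L)\oplus\pr_2^\ast\At_{A/S}(L)$ are identified via $\phi$ (up to the Atiyah extension of the trivial bundle), so the Atiyah class is primitive for the group law, and a primitive section of a sheaf pulled back from $S$ must vanish.

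For \ref{p:vac-3}, I would use that $\alpha$ is an abelian scheme, so $\alpha_\ast\cO_A = \cO_S$ and the formation of $\alpha_\ast$ of the pieces of the extension commutes with base change; moreover $\alpha^\ast\alpha_\ast\cF\to\cF$ for $\cF$ one of the three terms. The map $\alpha_\ast\alpha^\ast(\At_{A/S}(L))\to(\At_{A/S}(L))$ is a morphism of short exact sequences of $\cO_A$-modules; by the five lemma it is an isomorphism provided it is so on the sub and the quotient. On the quotient $\cO_A$: we have $\alpha^\ast\alpha_\ast\cO_A=\alpha^\ast\cO_S=\cO_A$ and the counit is the identity. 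On the sub $\Omega^1_{A/S}$: since $A$ is an abelian scheme, $\Omega^1_{A/S}\cong\alpha^\ast\omega_A$ is free of finite rank pulled back from $S$, and $\alpha_\ast\alpha^\ast\omega_A = \omega_A\otimes\alpha_\ast\cO_A = \omega_A$ (projection formula plus $\alpha_\ast\cO_A=\cO_S$), so again $\alpha^\ast\alpha_\ast\Omega^1_{A/S}=\Omega^1_{A/S}$ with the counit the identity. Therefore the middle map is an isomorphism. The only subtlety is whether $\alpha_\ast$ applied to the extension still gives the computed values after pulling back — but this is exactly the content of \ref{p:vac-2} (short-exactness of $\alpha_\ast(\At_{A/S}(L))$) together with the fact that each term of $\alpha_\ast(\At_{A/S}(L))$ is locally free and its formation commutes with base change. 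So the whole proof hinges on \ref{p:vac-2}, and in particular on the homogeneity argument for the vanishing of the Atiyah class.
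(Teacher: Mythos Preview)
Your argument for \eqref{p:vac-1} is essentially correct and matches the paper: infinitesimal rigidifications at $e$ correspond to splittings of the short exact sequence $\pi_\ast(\iota^\ast L)$, whose extension class lives in $\rH^1(S,\omega_A)$; vanishing of this group gives the splitting. The paper phrases this directly in terms of the extension class rather than via the multiplicative exact sequence you sketch, but the content is the same.

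Your treatment of \eqref{p:vac-3} via the five lemma is exactly the paper's argument.

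The genuine problem is in \eqref{p:vac-2}. You write ``over any affine open $U\subseteq S$ one has $\rH^1(U,\omega_A)$\ldots but this need not vanish in general'' and then abandon this route. This is a mistake: $\omega_A$ is a vector bundle on $S$, so for any affine open $U\subseteq S$ one has $\rH^1(U,\omega_A\rvert_U)=0$ by Serre's vanishing theorem for quasi-coherent sheaves on affine schemes. That is precisely the paper's argument: surjectivity of $\alpha_\ast\At_{A/S}(L)\to\alpha_\ast\cO_A$ is local on $S$; over an affine open, $\rH^1$ vanishes, \eqref{p:vac-1} produces a connection, and $\alpha_\ast$ of the corresponding splitting $s\colon\cO_A\to\At_{A/S}(L)$ is a section of the map in question, hence it is surjective. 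You had the right idea and discarded it.

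Your alternative homogeneity route is not wrong in spirit, but the final step is muddled: the Atiyah class of $L$ lives in $\rH^1(A,\Omega^1_{A/S})$, not in $\rH^0$, and your claim that ``a primitive section of a sheaf pulled back from $S$ must vanish'' is an $\rH^0$ statement. For $\rH^1$ of an abelian scheme, primitivity is automatic (the whole of $\rH^1(A,\cO_A)$ is primitive, for instance), so primitivity alone cannot force the class to die. One can make a version of this argument work by carefully tracking the decomposition $\Omega^1_{A\times_S A/S}\cong\pr_1^\ast\Omega^1_{A/S}\oplus\pr_2^\ast\Omega^1_{A/S}$ and restricting along $(e,\id)$, but this is considerably more work than the two-line affine-localization argument you abandoned.
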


\begin{proof} \eqref{p:vac-1}~ According to \Cref{Lemma:ConnectionsAndRigidifications}, it suffices to show that the line bundle $L$ admits an infinitesimal rigidification at $e$. With the notation of \Cref{Prop:RigidificationsAndSplittings}, an infinitesimal rigidification corresponds to a splitting of the short exact sequence of $\cO_S$-modules
\begin{equation}\tag*{$\pi_\ast (\iota^\ast L)$}
0 \too \omega_A \otimes e^\ast L \too \pi_\ast \iota^\ast L \too e^\ast L \too 0.
\end{equation}
The latter is an extension of the line bundle $e^\ast L$ on $S$ by the vector bundle $\omega_A \otimes e^\ast L$; therefore, its isomorphism class lies in $\rH^1(S, \cHom(e^\ast L, \omega_A \otimes e^\ast L)) = \rH^1(S, \omega_A)$. By hypothesis, the cohomology group $\rH^1(S, \omega_A)$ vanishes; hence the short exact sequence $\pi_\ast (\iota^\ast L)$ splits, and the line bundle $L$ admits a connection.

\eqref{p:vac-2}~ The push-forward along $\alpha$ is a left-exact functor; therefore, it suffices to show that the natural map $p \colon \alpha_\ast \At_{A/S}(L) \to \alpha_\ast \cO_A$ is surjective. The statement is local on $S$; thus the scheme $S$ may be supposed to be affine. Under this assumption, the cohomology group $\rH^1(S, \omega_A)$ vanishes, and because of \eqref{p:vac-1}, the line bundle $L$ admits a connection. In other words, by \Cref{Prop:CharacterizationConnections},  the Atiyah extension $(\At_{A/S}(L))$ of $L$  admits a splitting $s \colon \cO_A \to \At_{A/S}(L)$.  The homomorphism of $\cO_S$-modules $\alpha_\ast s \colon \alpha_\ast \cO_A \to \alpha_\ast \At_{A/S}(L)$ is a section of $p$; that is, it satisfies $p \circ \alpha_\ast s = \id$. In particular, the homomorphism $p$ is surjective.

\eqref{p:vac-3}~ The homomorphism of short exact sequences of $\cO_A$-modules in question is the commutative diagram
\[
\begin{tikzcd}
0 \ar[r] & \alpha^\ast \alpha_\ast \Omega^{1}_{A/S} \ar[r] \ar[d] & \alpha^\ast \alpha_\ast \At_{A/S}(L) \ar[r] \ar[d] & \alpha^\ast \alpha_\ast \cO_A \ar[r] \ar[d] & 0 \\
0 \ar[r] & \Omega^{1}_{A/S} \ar[r] & \At_{A/S}(L) \ar[r] & \cO_A \ar[r] & 0\rlap{,} 
\end{tikzcd}
\]
where the three vertical arrows are given by adjunction. The leftmost and the rightmost vertical arrows are isomorphisms; thus the central vertical arrow must be so by the five lemma.
\end{proof}

\begin{remark} \label{Remark:AlternativeDescriptionPushForwardAtiyahExtension} The Atiyah extension $(\At_{A/S}(L))$ is obtained as the tensor product with $L^\vee$ of the short exact sequence 
\begin{equation}
\tag{$\cJ^1_{A/S}(L)$} 0 \too \Omega^{1}_{A/S} \otimes L \too \cJ^1_{A/S}(L) \too L \too 0,
\end{equation}
where $\cJ^1_{A/S}(L)$ is the $\cO_X$-module of first-order jets of $L$. \Cref{Prop:RigidificationsAndPrincipalParts} furnishes an isomorphism 
$\phi \colon e^\ast (\cJ^1_{A/S}(L)) \to \pi_\ast (\iota^\ast L)$ of short exact sequences of $\cO_S$-modules. Taking the tensor product with $e^\ast L^\vee$ induces an isomorphism
\[ e^\ast (\At_{A/S}(L)) \too \pi_\ast (\iota^\ast L) \otimes e^\ast L^\vee\]
of short exact sequences of $\cO_S$-modules. Also note  that, by \Cref{Rmk:RigidificationAtNeutralSection}, the line bundle $e^\ast L$ on $S$ is trivial, whence we have an isomorphism
\[ e^\ast (\At_{A/S}(L)) \cong \pi_\ast (\iota^\ast L).\]
\end{remark}

\subsection{The canonical extension of the trivial line bundle}  The functor associating with an $S$-scheme $S'$ the set of isomorphism classes of homogeneous line bundles on $A \times_S S'$ is representable by an abelian scheme $\check{\alpha} \colon \check{A} \to S$, called the \emph{dual abelian scheme} (this is \cite[Theorem 1.9]{FaltingsChai}; to see that the definition in \emph{loc.~cit.} is equivalent to the one here, see \cite[Proposition 18.4]{OortCommutativeGroupSchemes}). Let $\cL$ be the Poincar\'e bundle (that is, the universal homogeneous line bundle on $A \times_S\check{A}$), $q \colon A \times_S \check{A} \to \check{A}$ the projection onto the second factor and $(\At_q(\cL))$ the Atiyah extension relative to $q$ of the Poincar\'e bundle $\cL$. Set $\cU_{\check{A}} := q_\ast \At_q(\cL)$. The sequence of $\cO_{\check{A}}$-modules 
\begin{equation} \tag{$\cU_{\check{A}}$}
  0 \too \check{\alpha}^\ast \omega_{A} \too \cU_{\check{A}} \too \cO_{\check{A}} \too 0
\end{equation}
 obtained by pushing forward the short exact sequence $(\At_q(\cL))$ along $q$, is short exact by \Cref{Prop:VanishingAtiyahClass}\eqref{p:vac-2} applied  to the abelian scheme $A \times_S \check{A}$ over $\check{A}$.

\begin{definition} \label{Def:UniversalExtension} The extension $(\cU_{\check{A}})$ is called the \emph{canonical extension} of $\cO_{\check{A}}$.
\end{definition}

\begin{remark} \label{Rmk:SplittingCanonicalExtensionByCanonicalDerivation} The pull-back of the Atiyah extension of $\cL$ along the unramified morphism $(\id_{A}, \check{e})$ is the Atiyah extension of the trivial line bundle $\cO_{A}$. Therefore, the canonical derivation $\rd_{A/S} \colon \cO_{A} \to \Omega^1_{A/S}$, being a connection on the trivial bundle, defines a splitting of $\check{e}^\ast (\cU_{\check{A}})$, called the \emph{canonical splitting}.
\end{remark}

\begin{remark} \label{Rmk:AlternativeDescriptionCanonicalExtension} Let $A_1$ be the first-order thickening of $A$ along the zero section $e$, $\iota \colon A_1 \times_S \check{A} \to A \times_S \check{A}$ the morphism obtained from the closed immersion $A_1 \to A$ by base change along $\check{\alpha}$ and $\pi \colon A_1 \times_S \check{A} \to \check{A}$ the second projection. \Cref{Remark:AlternativeDescriptionPushForwardAtiyahExtension} (applied to the abelian scheme $A \times_S \check{A}$ over $\check{A}$) furnishes an isomorphism of sequences of $\cO_{\check{A}}$-modules
\[ (e, \id_{\check{A}})^\ast (\At_q(\cL)) \stackrel{\lowsim}{\too} \pi_\ast (\iota^\ast \cL).  \]
On the other hand, by \Cref{Prop:VanishingAtiyahClass}\eqref{p:vac-3}, the evaluation at the zero section
\[ (\cU_{\check{A}}) = q_\ast (\At_q(\cL)) \too (e, \id_{\check{A}})^\ast (\At_q(\cL)) \]
is an isomorphism of sequences of $\cO_{\check{A}}$-modules. Composing these isomorphisms 
yields an isomorphism of sequences of $\cO_{\check{A}}$-modules
\[ (\cU_{\check{A}}) \cong \pi_\ast (\iota^\ast \cL). \]
\end{remark}

\subsection{Moduli space of connections} \label{sec:ModuliSpaceConnections} 

\begin{definition}
The $S$-scheme $\check{A}^\natural = \bbP(\cU_{\check{A}}) \smallsetminus \bbP(\check{\alpha}^\ast \omega_{A})$ is called the \emph{moduli space of rank $1$ connections} on~$A$.
\end{definition}

\begin{theorem} \label{Thm:RepresentabilityOfModuliSpaceOfConnections} The $S$-scheme $\check{A}^\natural$ represents the functor associating with an $S$-scheme $S'$ the set of isomorphism classes of triples $(L, \phi, \nabla)$ made up of a homogeneous line bundle $(L, \phi)$ on the abelian scheme $A_{S'}$ and a connection  $\nabla \colon L \to \Omega^1_{A_{S'}/S'} \otimes L$.
\end{theorem}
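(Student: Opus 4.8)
The strategy is to unwind the projective-bundle construction $\bbP(\cU_{\check A}) \smallsetminus \bbP(\check\alpha^\ast\omega_A)$ into a moduli description and match it termwise with the data $(L,\phi,\nabla)$. Recall that for a vector bundle $\cE$ on $\check A$ sitting in a short exact sequence $0 \to \cF \to \cE \to \cO_{\check A} \to 0$, the open subscheme $\bbP(\cE)\smallsetminus\bbP(\cF)$ represents the functor sending $\check A' \to \check A$ to the set of splittings of the pulled-back sequence (equivalently, to sections of $\cE \to \cO_{\check A}$ over $\check A'$, up to nothing — they form a set, not just a groupoid, being a torsor under $\cF$). Applying this to $(\cU_{\check A})$, a morphism $S' \to \check A^\natural$ is the same as a pair consisting of a morphism $f\colon S' \to \check A$ and a splitting of $f^\ast(\cU_{\check A})$. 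Now $f\colon S' \to \check A$ is, by the universal property of the dual abelian scheme recalled in \Cref{Sec:ConnectionsHomogeneousLineBundles} (via \cite{FaltingsChai, OortCommutativeGroupSchemes}), the datum of a homogeneous line bundle $(L,\phi)$ on $A_{S'}$, namely the pull-back $((\id_A\times f)^\ast\cL, \phi)$ of the Poincar\'e bundle.

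The remaining point is therefore to identify, for such an $f$, the set of splittings of $f^\ast(\cU_{\check A})$ with the set of connections on $L$. By construction $(\cU_{\check A}) = q_\ast(\At_q(\cL))$, and base change along $f$ (using \Cref{Prop:VanishingAtiyahClass}\eqref{p:vac-3}, which shows $\cU_{\check A}$ recovers the relative Atiyah extension after pulling back to the total space, together with flat base change for the proper smooth morphism $q$) gives a canonical identification of $f^\ast(\cU_{\check A})$ with the push-forward along $A_{S'}\to S'$ of the relative Atiyah extension $(\At_{A_{S'}/S'}(L))$. By \Cref{Prop:VanishingAtiyahClass}\eqref{p:vac-2} this push-forward is short exact, and by \Cref{Prop:VanishingAtiyahClass}\eqref{p:vac-3} the adjunction map $\alpha_{S'}^\ast\alpha_{S'\,\ast}(\At_{A_{S'}/S'}(L)) \to (\At_{A_{S'}/S'}(L))$ is an isomorphism; hence splittings of the push-forward correspond bijectively to splittings of the Atiyah extension itself, which by definition (see \Cref{sec:AtiyahExtensions}) are exactly the connections $\nabla\colon L \to \Omega^1_{A_{S'}/S'}\otimes L$.

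Stringing these bijections together — $\{\text{morphisms } S' \to \check A^\natural\} \leftrightarrow \{(f, \text{splitting of } f^\ast\cU_{\check A})\} \leftrightarrow \{((L,\phi), \text{splitting of } \alpha_{S'\,\ast}\At(L))\} \leftrightarrow \{(L,\phi,\nabla)\}$ — yields the claimed representability, and one checks this chain is functorial in $S'$, so it is an isomorphism of functors. The main obstacle I anticipate is the compatibility bookkeeping in the middle step: one must verify that the base-change isomorphism $f^\ast q_\ast(\At_q(\cL)) \cong \alpha_{S'\,\ast}(\At_{A_{S'}/S'}(L))$ is a map of \emph{short exact sequences} (respecting the sub-$\check\alpha^\ast\omega_A$ and the quotient $\cO$), so that sub-projective-bundle $\bbP(\check\alpha^\ast\omega_A)$ is removed precisely when the section lands in the "connection" locus rather than at infinity; this is where the explicit description of the Poincar\'e bundle's Atiyah extension and the identification $\omega_A \otimes e^\ast L \cong \omega_{A_{S'}}$ from \Cref{Remark:AlternativeDescriptionPushForwardAtiyahExtension} do the real work. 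Everything else is formal manipulation of adjunctions and the $\bbP(\cE)\smallsetminus\bbP(\cF)$ formalism.
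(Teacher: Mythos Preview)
Your proposal is correct and follows essentially the same route as the paper's proof: unwind $\bbP(\cU_{\check A})\smallsetminus\bbP(\check\alpha^\ast\omega_A)$ into pairs $(f,s)$ with $f\colon S'\to\check A$ and $s$ a splitting of $f^\ast(\cU_{\check A})$, interpret $f$ as a homogeneous line bundle $(L,\phi)$ via the Poincar\'e bundle, and use \Cref{Prop:VanishingAtiyahClass}\eqref{p:vac-3} together with \Cref{Prop:CharacterizationConnections} to match splittings with connections. The paper is slightly terser in the middle step---it pulls the splitting $s$ back along $\alpha'$ and invokes \Cref{Prop:VanishingAtiyahClass}\eqref{p:vac-3} directly to identify $\alpha'^\ast f^\ast(\cU_{\check A})$ with $(\At_{\alpha'}(L))$, rather than first identifying $f^\ast(\cU_{\check A})$ with $\alpha'_\ast(\At_{\alpha'}(L))$ and then appealing to adjunction---but the content is the same.
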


\begin{proof} For an $S$-scheme $S'$, an $S'$-valued point of $\check{A}^\natural$ consists of the datum of a morphism of $S$-schemes $f \colon S' \to \check{A}$ and  a splitting $s \colon \cO_{S'} \to f^\ast \cU_{\check{A}}$ of the short exact sequence of $\cO_{S'}$-modules $f^\ast (\cU_{\check{A}})$. Let $(L, \phi)$ be the homogeneous line bundle on the abelian scheme $A' := A_{S'}$ obtained as the pull-back of the Poincar\'e bundle $\cL$ along the morphism $\id_A \times f \colon A' := A \times_S S' \to A \times_S \check{A}$. Let $\alpha' \colon A' \to S'$ be the morphism obtained from $\alpha$ by base change. Then, by \Cref{Prop:VanishingAtiyahClass}\eqref{p:vac-3}, the short exact sequence of $\cO_{A'}$-modules $\alpha'^\ast f^\ast (\cU_{\check{A}})$ is the Atiyah extension $(\At_{\alpha'}(L))$ of the homogeneous line bundle $L$.  By \Cref{Prop:CharacterizationConnections}, the splitting $\alpha'^\ast s \colon \cO_{A'} \to \At_{\alpha'}(L)$ corresponds to a connection on $L$ (notice the equality $\cEnd L = \cO_S$, due to $L$ being a line bundle).
\end{proof}

The tensor product of line bundles together with a connection endows $\check{A}^\natural$ with the structure of a group $S$-scheme.  The natural projection $\pi \colon \check{A}^\natural \to \check{A}$, $(L, \phi, \nabla) \mapsto (L, \phi)$ is a faithfully flat morphism of group $S$-schemes. The kernel of $\pi$ is by definition made of connections on the trivial bundle $\cO_A$. Now, a connection on the trivial line bundle is nothing but $\rd_{A/S} + \omega$ for a global differential form on $A$, where $\rd_{A/S}$ is the canonical $\alpha^{-1}\cO_{S}$-linear derivation. The isomorphism $\alpha_\ast \Omega_{A/S}^1 \cong e^\ast \Omega^1_{A/S} =: \omega_A$  thus yields a short exact sequence of group $S$-schemes
\[ 0 \too \bbV(\omega_{A}) \too \check{A}^\natural \too \check{A} \too 0.\]

The following remark will only be needed once,  quite further in text (see the proof of \Cref{Prop:UniversalCoverUniversalVectorExtension}). The reader may harmlessly skip it at first.

\begin{remark} \label{Rmk:GroupStructureViaRigidification} As any affine bundle on an abelian scheme, the group structure on $\check{A}^\natural$ is defined by a unique isomorphism of $\cO_{\check{A}}$-modules
\[ \check{\psi} \colon  \pr_1^\ast\cU_{\check{A}} +_\rB \pr_2^\ast \cU_{\check{A}} \too \check{\mu}^\ast \cU_{\check{A}},\]
where $\check{\mu}, \pr_1, \pr_2 \colon \check{A} \times_S \check{A} \to \check{A}$ are, respectively, the group law, the first and the second projection, and $+_\rB$ is the Baer sum of extensions (see Examples~\ref{Ex:AffineBundles} and~\ref{Ex:RigidificationAffineBundle} below).  According to \Cref{Ex:PushingForwardRigidificationLineBundle}, the isomorphism $\check{\psi}$ is the push-forward along the morphism $(A_1 \times_S \check{A}) \times_{A_1} (A_1 \times_S \check{A}) \to \check{A} \times_S \check{A}$
of the rigidification \[\check{\phi} \colon  (\id, \pr_1)^\ast \cL \otimes (\id, \pr_2)^\ast \cL  \too (\id, \check{\mu})^\ast \cL\] of the Poincar\'e bundle $\cL$ on the abelian scheme $A \times_S \check{A}$ over $A$.
\end{remark}

\subsection{The Lie algebra of the dual abelian scheme (redux)} \label{sec:LieAlgebraDualAbelianScheme} Let $\check{A}_1$ be the first-order thickening of $\check{A}$ along its zero section $\check{e}$, $\pi \colon A \times \check{A}_1 \to A$ the projection onto the first factor, $\iota \colon  A \times_S \check{A}_1 \to A \times_S \check{A}$ the morphism obtained from the closed embedding $\check{A}_1 \to \check{A}$ by base change with respect to $\alpha \colon A \to S$ and $\alpha_1 \colon A \times_S \check{A}_1 \to S$ the structural morphism.  Consider the isomorphism class
\[ [\iota^\ast \cL] \in \rH^{1}\left(A \times_S \check{A}_1, \cO_{A \times \check{A}_1}^\times\right)\]
of the line bundle $\iota^\ast \cL$ on $A \times_S \check{A}_1$, where $\cL$ is the Poincar\'e bundle on $A \times_S \check{A}$. Let $c_1(\iota^\ast \cL)$ be the global section of the sheaf of abelian groups $\rR^1 \alpha_{1 \ast} \cO_{A \times \check{A}_1}^\times$ on $S$ defined as the image of $[\iota^\ast \cL]$ via the homomorphism of abelian groups
\[ \rH^{1}\left(A \times_S \check{A}_1, \cO_{A \times \check{A}_1}^\times\right) \too \rH^0\left(S, \rR^1 \alpha_{1 \ast} \cO_{A \times \check{A}_1}^\times\right) \]
given by the Grothendieck--Leray spectral sequence
\[ \rH^{p}\left(S, \rR^q \alpha_{1\ast} \cO_{A \times \check{A}_1}^\times\right) \implies \rH^{p+q}\left(A \times_S \check{A}_1, \cO_{A \times \check{A}_1}^\times\right).\]
The first projection $\pi \colon A \times_S \check{A}_1 \to A$ induces a homeomorphism on the underlying topological spaces, whence an isomorphism $\rR^1 \alpha_\ast (\pi_\ast \cO_{A \times \check{A}_1}^\times) \cong \rR^1 \alpha_{1 \ast} \cO_{A \times \check{A}_1}^\times$ that will be treated as understood in what follows. The homomorphism $\cO_{A} \to \pi_\ast \cO_{A \times \check{A}_1}$ defines a splitting of 
$0 \to \alpha^\ast \omega_{\check{A}} \to \pi_\ast \cO_{A \times \check{A}_1} \to \cO_{A} \to 0$. It follows that the short exact sequence of sheaves abelian groups on $A$
\[ 0 \too \alpha^\ast \omega_{\check{A}} \stackrel{\exp}{\too}  \pi_\ast \cO_{A \times \check{A}_1}^\times \too \cO_A^\times \too 0\]
splits, where $\exp \colon v \mapsto 1 + v$ is the truncated exponential. We therefore have an exact sequence of sheaves of abelian groups on $S$
\[ 0 \too \rR^1 \alpha_\ast \alpha^\ast \omega_{\check{A}} \too \rR^1 \alpha_\ast \left(\pi_\ast \cO_{A \times \check{A}_1}^\times\right)\too \rR^1 \alpha_\ast \cO_A. \]
The pull-back of the line bundle $\iota^\ast \cL$ along the morphism $(\id, \check{e}) \colon A \to A \times_S \check{A}_1$ is trivial; thus $c_1(\iota^\ast \cL)$ is (the image via the truncated exponential of) a section of $\cO_S$-module $\rR^1 \alpha_\ast \alpha^\ast \omega_{\check{A}}$ still written $c_1(\iota^\ast \cL)$. The projection formula yields an isomorphisms
$ \rR^1 \alpha_\ast \alpha^\ast \omega_{\check{A}} \cong \rR^1 \alpha_\ast \cO_A \otimes \omega_{\check{A}} = \cHom(\Lie \check{A}, \rR^1 \alpha_\ast \cO_A)$ through which the class $c_1(\iota^\ast \cL)$ corresponds to an isomorphism (see \cite[Section~8.4, Theorem 1]{NeronModels})
\[ \Phi_A \colon \Lie \check{A} \too  \rR^1 \alpha_\ast \cO_A.\]

\subsection{Universal property of the canonical extension}  \label{sec:UniversalPropertyUniversalExtension} We borrow notation from \Cref{sec:LieAlgebraDualAbelianScheme}. For a vector bundle $F$ on $S$, the Grothendieck--Leray spectral sequence $\rH^{p}(S, \rR^q \alpha_\ast \alpha^\ast F) \Rightarrow \rH^{p+q}(A, \alpha^\ast F)$ yields an exact sequence of $\Gamma(S, \cO_S)$-modules
\begin{equation} \label{Eq:SESGrothendieckLeray} 0 \too \rH^1(S, F) \stackrel{\alpha^\ast}{\too} \rH^1(A, \alpha^\ast F) \stackrel{p_F}{\too} \rH^0(S, \rR^1 \alpha_\ast \alpha^\ast F).
\end{equation}
For an extension of $\cO_A$ by $\alpha^\ast F$,
\begin{equation} \tag{$\cF$}
0 \too \alpha^\ast F \too \cF \too \cO_A \too 0,
\end{equation}
let $[\cF] \in \rH^1(A, \alpha^\ast F)$ be its isomorphism class.  The global section $c(\cF) := p_F([\cF])$ of the $\cO_S$-module $\rR^1 \alpha_\ast \alpha^\ast F$ can also be seen as the connecting homomorphism in the long exact sequence of $\cO_S$-modules
\[ 0 \too F \too \alpha_\ast \cF \too \cO_S \stackrel{c(\cF)}{\too} \rR^1 \alpha_\ast \alpha^\ast F \too \cdots. \]
The abelian scheme $A$ coincides with the dual abelian scheme of $\check{A}$. By means of this identification, consider the canonical extension $(\cU_A)$ on $A$ and its isomorphism
\[ [\cU_A] \in \rH^1\left(A, \cHom\left(\cO_A, \alpha^\ast \omega_{\check{A}}\right)\right) = \rH^1\left(A, \alpha^\ast \omega_{\check{A}}\right).\]
The truncated exponential map $\exp \colon \alpha^\ast \omega_{\check{A}} \to \pi_\ast \cO_{A \times \check{A}_1}^\times$, $v \mapsto 1 + v$ induces a homomorphism of abelian groups
\[ \exp \colon \rH^1\left(A, \alpha^\ast \omega_{\check{A}}\right) \too \rH^1\left(A, \pi_\ast \cO_{A \times \check{A}_1}^\times\right).\]
On the other hand, the morphism $\pi$ is a homeomorphism on the underlying topological spaces, thus an isomorphism of abelian groups
\[ \rH^{1}\left(A \times_S \check{A}_1, \cO_{A \times \check{A}_1}^\times\right) \stackrel{\lowsim}{\too} \rH^1\left(A, \pi_\ast \cO_{A \times \check{A}_1}^\times\right).\]
The image of $[\iota^\ast \cL]$ via the preceding isomorphism is the isomorphism class $[\pi_\ast \iota^\ast \cL]$ of the invertible $\pi_\ast \cO_{A \times \check{A}_1}$-module $\pi_\ast \iota^\ast \cL$.

\begin{proposition} \label{Prop:ComputationIsomorphismClassCanonicalExtension} With the notation above,
\begin{align}
\tag{1} [\pi_\ast \iota^\ast \cL] &= \exp([\cU_A]),\label{p:cicce-1}\\
\tag{2} c(\cU_A) &= c_1(\iota^\ast \cL).\label{p:cicce-2}
\end{align}

\end{proposition}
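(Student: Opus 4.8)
The plan is to prove both identities simultaneously by carefully unwinding the two isomorphisms through which the canonical extension $(\cU_A)$ was defined in \Cref{Def:UniversalExtension} and then comparing with the construction of $c_1(\iota^\ast \cL)$ in \Cref{sec:LieAlgebraDualAbelianScheme}. First I would recall from \Cref{Rmk:AlternativeDescriptionCanonicalExtension} (applied with the roles of $A$ and $\check A$ exchanged, so over the base $A$ rather than $\check A$) that there is a canonical isomorphism of short exact sequences of $\cO_A$-modules $(\cU_A) \cong \pi_\ast(\iota^\ast\cL)$, where now $\pi\colon A\times_S\check A_1\to A$ is the first projection and $\iota\colon A\times_S\check A_1\to A\times_S\check A$ is the base change of $\check A_1\hookrightarrow\check A$. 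Under this identification the extension $(\cU_A)$ becomes exactly the extension $0\to\alpha^\ast\omega_{\check A}\to\pi_\ast\cO_{A\times\check A_1}\cdot(\text{the }\iota^\ast\cL\text{-twist})\to\cO_A\to 0$; more precisely $\pi_\ast(\iota^\ast\cL)$, as an invertible $\pi_\ast\cO_{A\times\check A_1}$-module, represents the class obtained from $[\iota^\ast\cL]$ after applying the isomorphism $\rH^1(A\times_S\check A_1,\cO^\times)\xrightarrow{\sim}\rH^1(A,\pi_\ast\cO^\times)$ recalled just before the statement.

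Next I would make explicit the interplay between the additive extension $(\cU_A)$ and the multiplicative sheaf $\pi_\ast\cO_{A\times\check A_1}^\times$. The key point is that the truncated exponential $\exp\colon\alpha^\ast\omega_{\check A}\to\pi_\ast\cO_{A\times\check A_1}^\times$, $v\mapsto 1+v$, fits into a morphism of short exact sequences of sheaves of abelian groups on $A$ from $0\to\alpha^\ast\omega_{\check A}\to\pi_\ast\cO_{A\times\check A_1}\to\cO_A\to 0$ to $0\to\alpha^\ast\omega_{\check A}\to\pi_\ast\cO_{A\times\check A_1}^\times\to\cO_A^\times\to 0$, inducing on $\rH^1$ the map $\exp\colon\rH^1(A,\alpha^\ast\omega_{\check A})\to\rH^1(A,\pi_\ast\cO^\times)$. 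Since the class $[\pi_\ast\iota^\ast\cL]$ is, by the construction of the Poincar\'e bundle over $A\times_S\check A$ and the definition of the dual abelian scheme as the moduli of homogeneous line bundles, precisely the class that restricts on first-order neighbourhoods of $\check A$ to the "linearization" giving $(\cU_A)$, one gets $[\pi_\ast\iota^\ast\cL]=\exp([\cU_A])$. Concretely this is just the observation that the isomorphism of \Cref{Rmk:AlternativeDescriptionCanonicalExtension} is compatible with $\exp$: it identifies the additive extension $(\cU_A)$ with $\log$ of the multiplicative line bundle $\pi_\ast\iota^\ast\cL$. This proves \eqref{p:cicce-1}.

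For \eqref{p:cicce-2}, I would simply chase the definition of the map $p_F$ in the exact sequence \eqref{Eq:SESGrothendieckLeray} and of the map $\rH^1(A\times_S\check A_1,\cO^\times)\to\rH^0(S,\rR^1\alpha_{1\ast}\cO^\times)$ from the Grothendieck--Leray spectral sequence, noting both are edge maps of Leray spectral sequences (for the abelian sheaves $\alpha^\ast\omega_{\check A}$ and $\cO^\times_{A\times\check A_1}$ respectively, the latter transported through the homeomorphism $\pi$). Applying $p_{\omega_{\check A}}$ to $[\cU_A]$ gives $c(\cU_A)\in\rH^0(S,\rR^1\alpha_\ast\alpha^\ast\omega_{\check A})$, while pushing $[\iota^\ast\cL]$ through the spectral-sequence edge map and then through the splitting of $0\to\rR^1\alpha_\ast\alpha^\ast\omega_{\check A}\to\rR^1\alpha_\ast(\pi_\ast\cO^\times)\to\rR^1\alpha_\ast\cO_A$ gives $c_1(\iota^\ast\cL)$ (after the truncated exponential identification). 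By naturality of edge maps with respect to the morphism of short exact sequences of sheaves given by $\exp$, and using that $p_F$ is the composite of the full edge map with the canonical splitting coming from $\cO_A\to\pi_\ast\cO_{A\times\check A_1}$, the two global sections agree. Since both \eqref{p:cicce-1} and \eqref{p:cicce-2} ultimately follow from the same compatibility, I would present \eqref{p:cicce-2} as a corollary of \eqref{p:cicce-1}, obtained by applying $p_{\omega_{\check A}}$ (equivalently the connecting/edge homomorphism) to both sides and using that $\exp$ intertwines the additive and multiplicative edge maps and is the identity on the subsheaf $\alpha^\ast\omega_{\check A}$.

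The main obstacle I anticipate is purely bookkeeping: one must be scrupulous about the identification $\rR^1\alpha_\ast(\pi_\ast\cO^\times_{A\times\check A_1})\cong\rR^1\alpha_{1\ast}\cO^\times_{A\times\check A_1}$ coming from the homeomorphism $\pi$, about which splitting of the exponential sequence is used on $A$ versus on $S$, and about the fact that $p_F$ in \eqref{Eq:SESGrothendieckLeray} is not literally the Leray edge map but its composite with the splitting induced by $\cO_S\to\alpha_\ast\cO_A$ (respectively $\cO_A\to\pi_\ast\cO_{A\times\check A_1}$). Once these identifications are pinned down, the proof is a diagram chase with no further input; the conceptual content is entirely contained in \Cref{Rmk:AlternativeDescriptionCanonicalExtension} together with the functoriality of Leray edge maps under $\exp$.
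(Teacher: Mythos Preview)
Your approach is essentially the same as the paper's: both rely on \Cref{Rmk:AlternativeDescriptionCanonicalExtension} for the identification $(\cU_A)\cong\pi_\ast(\iota^\ast\cL)$, establish \eqref{p:cicce-1} from this, and deduce \eqref{p:cicce-2} immediately from \eqref{p:cicce-1}. The paper's execution is considerably shorter: it simply chooses an affine open cover $\{A_i\}$, represents $[\cU_A]$ by a \v{C}ech cocycle $f_{ij}\in\Gamma(A_i\cap A_j,\alpha^\ast\omega_{\check A})$, and observes that the invertible $\pi_\ast\cO_{A\times\check A_1}$-module $\pi_\ast\iota^\ast\cL$ is then glued along the transition functions $1+f_{ij}=\exp(f_{ij})$, which is \eqref{p:cicce-1}; \eqref{p:cicce-2} follows at once.

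One small imprecision in your write-up: the claimed ``morphism of short exact sequences'' from the additive sequence $0\to\alpha^\ast\omega_{\check A}\to\pi_\ast\cO_{A\times\check A_1}\to\cO_A\to 0$ to the multiplicative one $0\to\alpha^\ast\omega_{\check A}\to\pi_\ast\cO_{A\times\check A_1}^\times\to\cO_A^\times\to 0$ does not exist as stated, since there is no group homomorphism $\cO_A\to\cO_A^\times$ (nor $\pi_\ast\cO\to\pi_\ast\cO^\times$) making the squares commute. What you actually need, and what the \v{C}ech argument supplies directly, is that a local splitting of the extension $(\cU_A)$ yields a local trivialisation of the invertible $\pi_\ast\cO_{A\times\check A_1}$-module $\pi_\ast\iota^\ast\cL$, and the two resulting cocycles are related by $\exp$. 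Your spectral-sequence naturality discussion for \eqref{p:cicce-2} is correct but unnecessary once \eqref{p:cicce-1} is in hand.
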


The second statement can be reformulated by saying that, via $\rR^1 \alpha_\ast \alpha^\ast \omega_{\check{A}} \cong \cHom(\Lie \check{A}, \rR^1 \alpha_\ast \cO_A)$, the following equality holds:
\[ c(\cU_A) = \Phi_A. \]

\begin{proof} \eqref{p:cicce-1}~ \Cref{Rmk:AlternativeDescriptionCanonicalExtension} furnishes an isomorphism of short exact sequence of $\cO_A$-modules $(\cU_A) \cong \pi_\ast (\iota^\ast \cL)$. For an affine open cover $\{ A_i \}_{i \in I}$ of $A$, the isomorphism class $[\cU_A]$ is represented by a $1$-cocycle, say $f_{ij} \in \Gamma(A_i \cap A_j, \alpha^\ast \omega_{\check{A}})$ for $i, j \in I$. The invertible $\pi_\ast \cO_{A \times \check{A}_1}$-module $\pi_\ast \iota^\ast \cL$ is the glueing of the $\cO_{A_i}$-modules $(\pi_\ast \cO_{A \times \check{A}_1})_{\rvert A_i}$ along the transition maps $\exp(f_{ij}) = 1 + f_{ij}$. Relation~\eqref{p:cicce-2} follows immediately from \eqref{p:cicce-1}.
\end{proof}

For a homomorphism $\phi \colon \omega_{\check{A}} \to F$ with $F$ a vector bundle on $S$, let $(\cF_\phi)$ be the short exact sequence obtained by push-out of $(\cU_A)$ along the homomorphism $\alpha^\ast \phi$. Let $\gamma_F(\phi)$ denote its isomorphism class $[\cF_\phi] \in \rH^1(A, \alpha^\ast F)$; then this construction defines a map
\[ \gamma_F \colon \Hom(\omega_{\check{A}}, F) \too \rH^1(A, \alpha^\ast F).\]

\begin{theorem} \label{Th:UniversalPropertyUniversalExtension} The map $\gamma_F$ is injective and $\Gamma(S, \cO_S)$-linear, and its image is the set of isomorphism classes of extensions 
\begin{equation}
\tag{$\cF$} 0 \too \alpha^\ast F \too \cF \too \cO_A \too 0
\end{equation}
such that the short exact sequence of\, $\cO_S$-modules $e^\ast (\cF)$ splits.
\end{theorem}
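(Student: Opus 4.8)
The plan is to exploit the exact sequence~\eqref{Eq:SESGrothendieckLeray} together with the computation of the class of the canonical extension carried out in \Cref{Prop:ComputationIsomorphismClassCanonicalExtension}, reducing everything to a diagram chase. First I would observe that $\gamma_F$ is obviously $\Gamma(S,\cO_S)$-linear and functorial in $F$, since push-out of a fixed extension along $\alpha^\ast\phi$ is additive in $\phi$ and compatible with composition of homomorphisms $\omega_{\check A}\to F\to F'$; in particular, writing $c(\cU_A)=\Phi_A\colon\Lie\check A\to\rR^1\alpha_\ast\cO_A$ as in \Cref{Prop:ComputationIsomorphismClassCanonicalExtension}\eqref{p:cicce-2}, functoriality of the map $p_F$ in~\eqref{Eq:SESGrothendieckLeray} gives $p_F(\gamma_F(\phi))=\phi_\ast(c(\cU_A))$, i.e.\ the global section of $\rR^1\alpha_\ast\alpha^\ast F\cong\cHom(\Lie\check A,\rR^1\alpha_\ast\cO_A)\otimes\ldots$ obtained by postcomposing $\Phi_A$ with $\phi$, up to the projection-formula identifications. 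Since $\Phi_A$ is an \emph{isomorphism} (the cited \cite[Section~8.4, Theorem~1]{NeronModels}), the map $\phi\mapsto\phi_\ast(c(\cU_A))$ is an isomorphism from $\Hom(\omega_{\check A},F)=\Hom(\Lie\check A,F^\vee\mbox{-dual side})$ onto $\rH^0(S,\rR^1\alpha_\ast\alpha^\ast F)$.

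With this in hand the three assertions follow. For \textbf{injectivity}: if $\gamma_F(\phi)=0$ then $p_F(\gamma_F(\phi))=\phi_\ast(c(\cU_A))=0$, and since $\phi\mapsto\phi_\ast(c(\cU_A))$ is injective, $\phi=0$. For the \textbf{image}: given an extension $(\cF)$ of $\cO_A$ by $\alpha^\ast F$ whose restriction $e^\ast(\cF)$ splits, the splitting means precisely that the connecting map $\cO_S\to\rR^1\alpha_\ast\alpha^\ast F$ in the long exact sequence $0\to F\to\alpha_\ast\cF\to\cO_S\xrightarrow{c(\cF)}\rR^1\alpha_\ast\alpha^\ast F$ — which is $p_F([\cF])$ by the remark following~\eqref{Eq:SESGrothendieckLeray} — need not vanish, but what the splitting of $e^\ast(\cF)$ does give is that $[\cF]$ lifts a section of $\rR^1\alpha_\ast\alpha^\ast F$; more precisely, I would argue that the subgroup of $\rH^1(A,\alpha^\ast F)$ consisting of classes with split $e^\ast(\cF)$ is exactly the preimage under $p_F$ of the image of $\rH^0(S,\rR^1\alpha_\ast\alpha^\ast F)$ intersected with a complement of $\alpha^\ast\rH^1(S,F)$. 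Concretely: $e^\ast$ of the canonical extension splits (\Cref{Rmk:SplittingCanonicalExtensionByCanonicalDerivation}), hence $e^\ast(\cF_\phi)$ splits for every $\phi$, so $\im\gamma_F$ is contained in the asserted set. Conversely, given $(\cF)$ with $e^\ast(\cF)$ split, set $\phi:=(\phi\mbox{ corresponding to }p_F([\cF])\mbox{ via }\Phi_A)$; then $\gamma_F(\phi)$ and $[\cF]$ have the same image under $p_F$, so by exactness of~\eqref{Eq:SESGrothendieckLeray} their difference is $\alpha^\ast\beta$ for a unique $\beta\in\rH^1(S,F)$. It remains to see $\beta=0$, and this is where the splitting hypothesis on $e^\ast(\cF)$ is used in an essential way: pulling back along $e$ kills $\alpha^\ast$ (since $e^\ast\alpha^\ast=\id$ on $\rH^1(S,F)$) and $e^\ast\gamma_F(\phi)$ is split, so $e^\ast(\cF)$ split forces $\beta=e^\ast[\cF]-e^\ast\gamma_F(\phi)=0-0=0$; hence $[\cF]=\gamma_F(\phi)$.

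The \textbf{main obstacle} is getting the last identification clean — namely proving that $\phi\mapsto p_F(\gamma_F(\phi))$ is a bijection onto $\rH^0(S,\rR^1\alpha_\ast\alpha^\ast F)$ and, in parallel, that the set of classes with split $e^\ast(\cF)$ maps isomorphically onto that same $\rH^0$. This requires care with the projection-formula isomorphism $\rR^1\alpha_\ast\alpha^\ast F\cong\cHom(\omega_{\check A}^\vee\mbox{-side}, \rR^1\alpha_\ast\cO_A)\otimes\cdots$ and with tracking that $c(\cU_A)=\Phi_A$ really does transport the universal homomorphism $\id\in\End(\Lie\check A)$ to the generator, so that postcomposition with $\phi$ exhausts all of $\rH^0$. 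I would isolate this as a lemma: the composite $\Hom(\omega_{\check A},F)\xrightarrow{\gamma_F}\rH^1(A,\alpha^\ast F)\xrightarrow{p_F}\rH^0(S,\rR^1\alpha_\ast\alpha^\ast F)$ equals the isomorphism induced by $\Phi_A$ and the projection formula; granting that lemma, the theorem is the purely formal diagram chase above. One should also double-check the edge case $\rH^1(S,F)\ne0$, where $\gamma_F$ is genuinely not surjective onto $\rH^1(A,\alpha^\ast F)$ and the splitting condition on $e^\ast(\cF)$ is exactly what cuts out the image; the argument above handles this uniformly since $e^\ast\circ\alpha^\ast=\id$.
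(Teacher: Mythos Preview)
Your proposal is correct and follows essentially the same route as the paper: the paper isolates exactly the lemma you anticipate---that $p_F\circ\gamma_F$ is the isomorphism induced by $\Phi_A\otimes\id_F$ on global sections (\Cref{Lemma:TechnicalLemmaUniversalPropertyCanonicalExtension})---and then runs the same diagram chase using $e^\ast\circ\alpha^\ast=\id$ and the canonical splitting of $e^\ast(\cU_A)$ from \Cref{Rmk:SplittingCanonicalExtensionByCanonicalDerivation}. Your exposition has a few slips (the phrase ``kills $\alpha^\ast$'' should read ``inverts $\alpha^\ast$'', and the false start about preimages and complements in the middle paragraph should be deleted), but the mathematics is the paper's own argument.
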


This statement is an immediate consequence of the following more precise fact. To state it, consider the homomorphism $ e^\ast \colon \rH^1(A, \alpha^\ast F) \to \rH^1(S, F)$ given by the pull-back of an extension along the morphism $e$. Since $e$ is a section of $\alpha$, the composite map $e^\ast \circ \alpha^\ast$ is the identity of $\rH^1(S, F)$. Also, note that an extension $(\cF)$ as above splits if and only if $e^\ast [\cF] = 0$.

\begin{lemma} \label{Lemma:TechnicalLemmaUniversalPropertyCanonicalExtension} For a vector bundle $F$ on $S$,
\begin{enumerate}
\item\label{l:tlupce-1} $p_F \circ \gamma_F$ is the isomorphism induced by\, $\Phi_A \otimes \id_F \colon \cHom(\omega_{\check{A}}, F)  \to \rR^1 \alpha_\ast \alpha^\ast F$ on global sections, 
\item\label{l:tlupce-2} the following sequence of\, $\Gamma(S, \cO_S)$-modules is short exact:
\[ 0 \too \Hom(\omega_{\check{A}}, F) \stackrel{\gamma_F}{\too} \rH^1(A, \alpha^\ast F) \stackrel{e^\ast}{\too} \rH^1(S, F) \too 0.\]
\end{enumerate} 
\end{lemma}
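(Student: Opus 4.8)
The plan is to prove \eqref{l:tlupce-1} first and then deduce \eqref{l:tlupce-2} from it together with the Grothendieck--Leray exact sequence \eqref{Eq:SESGrothendieckLeray}. For \eqref{l:tlupce-1}, recall that $\gamma_F(\phi) = [\cF_\phi]$ is the class of the push-out of $(\cU_A)$ along $\alpha^\ast \phi \colon \alpha^\ast \omega_{\check A} \to \alpha^\ast F$. Since forming the push-out is compatible with the functorial map $p_{(-)}$ coming from the edge morphism of the spectral sequence, one has $p_F([\cF_\phi]) = (\rR^1\alpha_\ast \alpha^\ast \phi)(p_{\omega_{\check A}}([\cU_A])) = (\rR^1\alpha_\ast\alpha^\ast\phi)(c(\cU_A))$. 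By \Cref{Prop:ComputationIsomorphismClassCanonicalExtension}\eqref{p:cicce-2} (in the reformulated form $c(\cU_A) = \Phi_A$), this equals $(\rR^1\alpha_\ast\alpha^\ast\phi)\circ\Phi_A$. Chasing through the projection-formula identifications $\rR^1\alpha_\ast\alpha^\ast F \cong \cHom(\Lie\check A,\rR^1\alpha_\ast\cO_A)$ and likewise for $\omega_{\check A}$, post-composition with $\rR^1\alpha_\ast\alpha^\ast\phi$ becomes composition with $\phi$ on the $\cHom$ side, so $p_F\circ\gamma_F$ is exactly the map induced by $\Phi_A\otimes\id_F$ on global sections, as claimed. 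The only real care needed here is bookkeeping: that the several canonical identifications (projection formula, the homeomorphism $\pi$, the truncated exponential) are all natural in $F$, so that everything commutes with the linear map $\phi$.

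Given \eqref{l:tlupce-1}, part \eqref{l:tlupce-2} follows formally. The sequence \eqref{Eq:SESGrothendieckLeray} is exact:
\[
0 \too \rH^1(S,F) \stackrel{\alpha^\ast}{\too} \rH^1(A,\alpha^\ast F) \stackrel{p_F}{\too} \rH^0(S,\rR^1\alpha_\ast\alpha^\ast F).
\]
Since $\Phi_A$ is an isomorphism (it is the identification of $\Lie\check A$ with $\rR^1\alpha_\ast\cO_A$), so is $\Phi_A\otimes\id_F$, hence by \eqref{l:tlupce-1} the composite $p_F\circ\gamma_F$ is an isomorphism onto $\rH^0(S,\rR^1\alpha_\ast\alpha^\ast F)$. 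In particular $\gamma_F$ is injective, and $\im\gamma_F$ maps isomorphically onto $\im p_F = \ker e^\ast$ — wait, one must be slightly careful, because \eqref{Eq:SESGrothendieckLeray} only gives that $\im p_F = \ker(\text{next differential})$, not a priori that $p_F$ is surjective; but the displayed five-term sequence already truncates $p_F$ to land in $\rH^0(S,\rR^1\alpha_\ast\alpha^\ast F)$ with image equal to all of it precisely because $p_F\circ\gamma_F$ is onto. Since $\alpha^\ast$ is injective and $e^\ast\circ\alpha^\ast = \id$, the map $e^\ast$ is surjective with kernel a complement to $\im\alpha^\ast$; and $\ker e^\ast = \ker p_F^{\vee}$... more cleanly: from $e^\ast\circ\alpha^\ast=\id$ we get $\rH^1(A,\alpha^\ast F) = \im\alpha^\ast \oplus \ker e^\ast$, and exactness of \eqref{Eq:SESGrothendieckLeray} identifies $\ker e^\ast$ with a subgroup mapping injectively via $p_F$; comparing with the fact that $p_F\circ\gamma_F$ is an isomorphism shows $\im\gamma_F = \ker e^\ast$ and that the sequence in \eqref{l:tlupce-2} is short exact.

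The $\Gamma(S,\cO_S)$-linearity of $\gamma_F$ is routine: push-out along $\alpha^\ast\phi$ depends additively on $\phi$ and commutes with scaling, by the bilinearity of the Baer-sum/push-out construction. I expect the main obstacle to be purely organizational rather than mathematical, namely verifying the compatibility in \eqref{l:tlupce-1} between push-out of extensions and the edge map $p_F$ of the Grothendieck--Leray spectral sequence, done carefully enough that its naturality in $F$ is visibly correct; once that diagram chase is in place, \eqref{l:tlupce-2} is a short exercise with \eqref{Eq:SESGrothendieckLeray} and the splitting $e^\ast\circ\alpha^\ast=\id$.
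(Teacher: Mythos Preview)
Your argument for \eqref{l:tlupce-1} is essentially the paper's: push-out along $\alpha^\ast\phi$ is compatible with the connecting map $c(-)$, so $c(\cF_\phi) = (\id\otimes\phi)\circ c(\cU_A) = (\id\otimes\phi)\circ\Phi_A$ by \Cref{Prop:ComputationIsomorphismClassCanonicalExtension}\eqref{p:cicce-2}; the paper expresses this via the evident commutative ladder of long exact sequences.

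There is, however, a genuine gap in your deduction of \eqref{l:tlupce-2}. From $e^\ast\circ\alpha^\ast=\id$ and exactness of \eqref{Eq:SESGrothendieckLeray} you correctly obtain that $p_F$ restricts to an isomorphism on $\ker e^\ast$, and from \eqref{l:tlupce-1} that $p_F$ restricts to an isomorphism on $\im\gamma_F$. But two subgroups of $\rH^1(A,\alpha^\ast F)$ both mapping isomorphically onto $\rH^0(S,\rR^1\alpha_\ast\alpha^\ast F)$ via $p_F$ need not coincide: they are both complements to $\ker p_F=\im\alpha^\ast$, and distinct complements exist in general. Concretely, nothing in your argument excludes the possibility that $\gamma_F(\phi)$ has a nonzero component in $\im\alpha^\ast$ under the splitting $\rH^1(A,\alpha^\ast F)=\im\alpha^\ast\oplus\ker e^\ast$.

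What is missing is the verification that $e^\ast\circ\gamma_F = 0$, i.e.\ that $\im\gamma_F\subseteq\ker e^\ast$. The paper supplies this by observing that the canonical derivation $\rd_{\check A/S}$ gives a splitting of $e^\ast(\cU_A)$ (\Cref{Rmk:SplittingCanonicalExtensionByCanonicalDerivation}), and push-out along $\phi$ carries this to a splitting of $e^\ast(\cF_\phi)$; hence $e^\ast[\cF_\phi]=0$. Once you add this one line, your argument goes through: $\im\gamma_F\subseteq\ker e^\ast$ together with both being complements to $\im\alpha^\ast$ forces equality.
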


\begin{proof} \eqref{l:tlupce-1}~ The diagram of $\cO_S$-modules
\[ 
\begin{tikzcd}
0 \ar[r] & \omega_{\check{A}} \ar[r] \ar[d, "\phi"] & \alpha_\ast \cU_A \ar[r] \ar[d] & \cO_S \ar[d, equal] \ar[r, "c(\cU_A)"] & \rR^1 \alpha_\ast \cO_A \otimes \omega_{\check{A}}  \ar[r] \ar[d, "\id \otimes \phi"] & \cdots \\
0 \ar[r] & F \ar[r] & \alpha_\ast \cF_\phi \ar[r] & \cO_S \ar[r, "c(\cF_\phi)"] & \rR^1 \alpha_\ast \cO_A \otimes F \ar[r]  & \cdots
\end{tikzcd}
\]
is commutative. It follows from \Cref{Prop:ComputationIsomorphismClassCanonicalExtension}\eqref{p:cicce-2} that the composite map $p_F \circ \gamma_F$ is the one induced on global sections by $\Phi_A \otimes \id_F \colon \Lie \check{A} \otimes F \to \rR^1 \alpha_\ast \cO_A \otimes F$. 

\eqref{l:tlupce-2}~ First of all, notice that the map $\gamma_F$ is injective and $\Gamma(S, \cO_S)$-linear. Second, the composite map $e^\ast \circ \gamma_F$ vanishes: for a homomorphism $\phi \colon \omega_{\check{A}} \to F$, the splitting of the short exact sequence of $\cO_S$-module $e^\ast (\cU_A)$ induced by the derivation $\rd_{\check{A}/S}$ (see \Cref{Rmk:SplittingCanonicalExtensionByCanonicalDerivation}) induces a splitting of $e^\ast (\cF_\phi)$. Therefore, it remains to show that the image of $\gamma_F$ is the whole $\ker(e^\ast)$. Set $r_F := (p_F \circ \gamma_F)^{-1} \circ p_F$. Then $r_F \circ \gamma_F$ is the identity of $\Hom(\omega_{\check{A}}, F)$, and the sequence of $\Gamma(S, \cO_S)$-modules
\[ 0 \too \rH^1(S, F) \stackrel{\alpha^\ast}{\too}  \rH^1(A, \alpha^\ast F)  \stackrel{r_F}{\too} \Hom(\omega_{\check{A}}, F) \too 0\]
is short exact by \eqref{Eq:SESGrothendieckLeray}. Since $\gamma_F$ and $e^\ast$ are sections of, respectively,  $r_F$ and $\alpha^\ast$, the result follows.
\end{proof}

\subsection{Preliminaries on extensions of an abelian scheme} Let $G$ be an affine, commutative, faithfully flat and finitely presented group $S$-scheme. 

 \subsubsection{Principal bundles} \label{Sec:PrincipalBundles}  A \emph{principal $G$-bundle} on $A$ is a faithfully flat $A$-scheme $P$ endowed with an action of $G$ such that the morphism $ (\sigma, \pr_P) \colon G \times_S P \to P \times_A P $ is an isomorphism, where $\sigma, \pr_P \colon G \times_S P \to P$ are, respectively, the morphism defining the action and the projection onto $A$. Let $\rH^1_{\textup{fppf}}(A, G)$ denote the set of isomorphism classes of principal $G$-bundles on $A$. 
 
 Let $\rho \colon G \to G'$ be a morphism of group $S$-schemes, where $G'$ has the same properties of $G$, and let $P$ be a principal $G$-bundle on $A$. 
 The quotient $\rho_\ast P$ of $S$-scheme $G' \times_S P$ via the action $g(g',x) = (\rho(g)g', gx)$ of $G$ exists and is a principal $G'$-bundle on $A$, called the \emph{push-out} (see \cite[Propositions 4.5.6 and 12.1.2]{OlssonStacks}).
 For principal $G$-bundles $P$ and $P'$ on $A$, the push-out 
 \[ P \wedge P' := \mu_{G \ast} (P \times_A P')\]
 of the principal $G \times_S G$-bundle $P \times_A P'$ along the sum map $\mu_G \colon G \times_S G \to G$ is called the \emph{sum}. This operation endows the set $\rH^1_{\textup{fppf}}(A, G)$ with the structure of an abelian group with neutral element $G \times_S A$ and inverse $P \mapsto [-1]_\ast P$, where $[-1]$ is the inverse map on $G$. Furthermore, the push-out of an endomorphism of $G$ as a group $S$-scheme induces an endomorphism of the abelian group $\rH^1_{\textup{fppf}}(A, G)$. This equips $\rH^1_{\textup{fppf}}(A, G)$ with the structure of a module on the ring $\End(G)$ of endomorphisms of $G$. For instance, if $G = \bbV(F)$ for some vector bundle $F$ on $S$, the set $\rH^1_{\textup{fppf}}(A, G)$ is naturally a $\Gamma(S, \cO_S)$-module.

\begin{example} \label{Ex:LineBundlesAsPrincipalBundles} Suppose $G = \Gm$. The principal $\Gm$-bundle associated with a line bundle $L$ on $A$ is the total space of $L$ deprived of its zero section $\bbV(L)^\times$. The  so-defined map $\Pic(A) \to \rH^1_{\textup{fppf}}(A, \Gm)$, $L \mapsto  \bbV(L)^\times$ is an isomorphism of abelian groups (see \cite[Expos\'e XI, Proposition 5.1]{SGA1}), that is, a sum of principal $\Gm$-bundles corresponds to a tensor product of line bundles.
\end{example}

\begin{example} \label{Ex:AffineBundles} Suppose $G = \bbV(F)$ for some vector bundle $F$ on $S$, and let 
  \begin{equation} \tag{$\cF$} 0 \too \alpha^\ast F \too \cF \stackrel{p}{\too} \cO_A \too 0
  \end{equation}
  be a short exact sequence of $\cO_A$-modules. The $A$-scheme $\bbA(F) := \bbP(\cF) \smallsetminus \bbP(\alpha^\ast F)$ is a principal $\bbV(F)$-bundle. The map $\rH^1(A, \alpha^\ast F) \to \rH^1_{\textup{fppf}}(A, \bbV(F))$, $(\cF) \mapsto \bbA(\cF)$ is an isomorphism of $\Gamma(S, \cO_S)$-modules (see \cite[Expos\'e XI, Proposition 5.1]{SGA1}). In particular, the sum of such principal $\bbV(F)$-bundles is the $\bbV(F)$-bundle associated with
  the Baer sum of the corresponding extensions.
\end{example}

\subsubsection{Law groups on principal bundles} \label{LawGroupsPrincipalBundles} An \emph{extension} of $A$ by $G$ is the datum of a short exact sequence of commutative group $S$-schemes
\begin{equation} \tag{$E$} 0 \too G \stackrel{i_E}{\too} E \stackrel{p_E}{\too} A \too 0,
\end{equation}
where the morphism $p_E$ is faithfully flat.  Note that an extension $E$ of $A$ by $G$ is naturally a principal $G$-bundle over $A$. Since the natural homomorphism $\cO_S \to f_\ast \cO_A$ is an isomorphism, the commutativity of $E$ is automatic. Morphisms of extensions are defined in the evident way. An isomorphism of extensions is a morphism inducing the identity on $A$ and $G$. Let $\Ext(A, G)$ be set of isomorphism classes of extensions of $A$ by $G$. For a morphism of $S$-group schemes $\rho \colon G \to G'$, with $G'$ having the same properties as $G$, and an extension $(E)$, the cokernel $\phi_\ast (E)$ of the morphism $(i_E, \rho) \colon G \to E \times_S G'$ is called the \emph{push-out} of $(E)$.  The \emph{Baer sum} of extensions $(E)$ and $(E')$ of $A$ by $G$ is the push-out of $E \times_A E$ along the sum morphism $G \times_S G \to G$. The Baer sum endows $\Ext(A, G)$ with the structure of an abelian group.  Similarly to the case of principal $G$-bundles, the abelian group $\Ext(A, G)$ is endowed with the structure of an $\End(G)$-module. Seeing an extension as a principal bundle gives rise to a homomorphism of $\End(G)$-modules
\[ \lambda_G \colon \Ext(A, G) \too \rH^1_{\textup{fppf}}(A, G). \]
A \emph{rigidification} of a principal $G$-bundle $P$ over $A$ is an isomorphism 
\[ \phi \colon \pr_1^\ast P \wedge \pr_2^\ast P \too  \mu_A^\ast P\] of principal $G$-bundles over $A \times_S A$.\footnote{The definition of a rigidification in \cite[Expos\'e VII, Section~1]{SGA7I} involves the commutativity of two diagrams which is automatic over abelian schemes.} The map $\lambda_G$ is injective, and its image is the set of isomorphism classes of principal $G$-bundles admitting a rigidification (see \cite[Th\'eor\`eme 15.5]{SerreGroupesAlgebriques}).

\begin{example} Suppose $G = \Gm$ and identify principal $\Gm$-bundles with line bundles as in \Cref{Ex:LineBundlesAsPrincipalBundles}. A rigidification of a line bundle $L$ on $A$ is an isomorphism of $\cO_{A \times_S A}$-modules $\phi \colon \pr_1^\ast L \otimes \pr_2^\ast L  \to  \mu_A^\ast L $.
\end{example}

\begin{example} \label{Ex:RigidificationAffineBundle} Suppose $G = \bbV(F)$ for some vector bundle $F$ on $S$. Identify principal $\bbV(F)$-bundles with extensions of $\cO_A$ by $f^\ast F$ as in \Cref{Ex:AffineBundles}. For a short exact sequence of $\cO_A$-modules
\begin{equation} \tag{$\cF$} 0 \too f^\ast F \too \cF \too \cO_A \too 0, \end{equation}
a rigidification of $(\cF)$ is an isomorphism of short exact sequences of $\cO_{A \times_S A}$-modules
\[ \phi \colon  \pr_1^\ast (\cF) +_\rB \pr_2^\ast (\cF) \stackrel{\lowsim}{\too}  \mu_A^\ast (\cF),\]
where $\pr_1^\ast (\cF) +_\rB \pr_2^\ast (\cF)$ is the Baer sum of the extensions $\pr_1^\ast (\cF)$ and $\pr_2^\ast (\cF)$.
\end{example}

\begin{example} \label{Ex:PushingForwardRigidificationLineBundle} For a vector bundle $F$  on $S$, let $S'$ be the first-order thickening of $\bbV(F^\vee)$ along its zero section. Let $L$ be a line bundle on $A':= A \times_S S'$ endowed with a rigidification $\phi \colon \pr_1^\ast L \otimes \pr_2^\ast L \to \mu_{A'}^\ast L$,  with the obvious notation.  Let $s \colon A \to A'$ be the closed immersion induced by the zero section of $F^\vee$ and $\pi \colon A' \to A$ the projection onto $A$. The $\cO_A$-module $\cF := \pi_\ast L$ sits into the following short exact sequence of $\cO_A$-modules:
\begin{equation} \tag{$\cF$}
0 \too f^\ast F \otimes s^\ast L \too \cF \too s^\ast L \too 0.
\end{equation}
According to \Cref{Prop:TensorProductBecomesBaerSum}, the push-forward of the $\cO_{A' \times A'}$-module $\pr_1^\ast L \otimes \pr_2^\ast L$ along the morphism  $ \pi \times \pi \colon A' \times_{S'} A' \to A \times_S A $
 is the Baer sum of the extensions $\pr_1^\ast \cF \otimes \pr_2^\ast s^\ast L$ and $\pr_1^\ast s^\ast L \otimes \pr_2^\ast \cF $.  Assume further that the line bundle $s^\ast L$ is isomorphic to $\cO_A$. Then, the vector bundle $\cF$ is an extension of $\cO_A$ by $f^\ast F$, and the isomorphism $(\pi \times \pi)_\ast \phi \colon \pr_1^\ast \cF +_\rB \pr_2^\ast \cF  \to \mu_A^\ast \cF$ is a rigidification like the one considered in Example~\ref{Ex:RigidificationAffineBundle}.
\end{example}

\subsection{The universal vector extension} A \emph{vector extension} of the abelian scheme $A$ is an extension of $A$ by $\bbV(F)$ for some vector bundle $F$ on $S$ called its \emph{vector part}. Recall the injective homomorphism of $\Gamma(S, \cO_S)$-modules 
\[ \lambda_F := \lambda_{\bbV(F)} \colon \Ext(A, \bbV(F)) \too \rH^1_{\textup{fppf}}(A, \bbV(F)) = \rH^1(A, \alpha^\ast F)\]
defined in \Cref{LawGroupsPrincipalBundles} and the map $p_F \colon \rH^1(A, \alpha^\ast F) \to \rH^0(S, \rR^1 \alpha_\ast \alpha^\ast F)$ introduced in \eqref{Eq:SESGrothendieckLeray}. According to \cite[Proposition 1.10]{MazurMessing}, the composite homomorphism
\[ \lambda_{F/S} := p_F \circ \lambda_F \colon  \Ext(A, F) \too \rH^0(S, \rR^1 \alpha_\ast \alpha^\ast F) \]
is bijective. For the vector bundle $F = (\rR^1 \alpha_\ast \cO_A)^\vee$ on $S$, this gives an isomorphism
\[ \Ext(A, \bbV((\rR^1 \alpha_\ast \cO_A)^\vee)) \cong  \End \rR^1 \alpha_\ast \cO_A.\]
\begin{definition} The universal vector extension is the one corresponding to the identity via the above isomorphism:
\begin{equation} \tag{$\rE(A)$}
0 \too \bbV((\rR^1 \alpha_\ast \cO_A)^\vee) \too \rE(A) \too A \too 0.
\end{equation}
\end{definition}

The extension $\rE(A)$ deserves the `universal' title because of the following property.  First notice that, for vector bundles $F$ and $F'$ on $S$ and a homomorphism of $\cO_S$-modules $\phi \colon F \to F'$, the diagram of $\Gamma(S, \cO_S)$-modules
\[
\begin{tikzcd}
\Ext(A, F)  \ar[r, "\lambda_{F/S}"] \ar[d]&\rH^0(S, \rR^1 \alpha_\ast \alpha^\ast F) \ar[d, "\id \otimes \phi"]\\
\Ext(A, F')  \ar[r, "\lambda_{F'/S}"]&\rH^0(S, \rR^1 \alpha_\ast \alpha^\ast F')
\end{tikzcd}
\]
is commutative, where the leftmost vertical arrow is given by push-out along $\phi$. Moreover, for a vector extension $G$, the global section $\lambda_{F/S}(G)$ of $\rR^1 \alpha_\ast \alpha^\ast F$ defines a homomorphism of $\cO_S$-modules $\lambda_{F/S}(G) \colon (\rR^1 \alpha_\ast \cO_A)^\vee \to F$. The extension $G$ is isomorphic (in a unique way!) to the push-out of $(\rE(A))$
along $\lambda_{F/S}(G)$.  The moduli space $A^\natural$ of rank $1$ connections on $\check{A}$ is a vector extension of the abelian scheme $A$ with vector part $\omega_{\check{A}}$. Recall the isomorphism of $\cO_S$-modules $\Phi_A \colon \Lie \check{A} \to \rR^1 \alpha_\ast \cO_A$ introduced in \Cref{sec:LieAlgebraDualAbelianScheme}. \Cref{Prop:ComputationIsomorphismClassCanonicalExtension}\eqref{p:cicce-2} implies the following. 

\begin{theorem} \label{Thm:CanonicalIsomorphismUniversalVectorExtensionModuliSpaceConnections} The vector extension $A^\natural$ is the push-out of\, $\rE(A)$ along
\[ \Phi_A^\vee \colon (\rR^1 \alpha_\ast \cO_A)^\vee \stackrel{\lowsim}{\too} (\Lie \check{A})^\vee = \omega_{\check{A}}.\]
\end{theorem}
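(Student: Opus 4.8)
The goal is to identify the vector extension $A^\natural$ with the push-out of $\rE(A)$ along the specific linear map $\Phi_A^\vee$. Since the universal property of $\rE(A)$ (stated just before the theorem) already guarantees that $A^\natural$ is the push-out of $\rE(A)$ along \emph{some} canonical homomorphism, namely $\lambda_{F/S}(A^\natural) \colon (\rR^1 \alpha_\ast \cO_A)^\vee \to \omega_{\check{A}}$ with $F = \omega_{\check{A}}$, the content of the theorem is purely the computation of that homomorphism: I must show $\lambda_{F/S}(A^\natural) = \Phi_A^\vee$. So the whole proof reduces to unwinding the definition of $\lambda_{F/S} = p_F \circ \lambda_F$ applied to the vector extension $A^\natural$.

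First I would recall that $A^\natural$, as a vector extension of $A$ with vector part $\omega_{\check{A}}$, is by construction the extension associated to the canonical extension $(\cU_A)$ of $\cO_A$ on $A$ (here using that $A$ is the dual of $\check{A}$, as in \Cref{sec:UniversalPropertyUniversalExtension}); indeed the short exact sequence of group schemes $0 \to \bbV(\omega_{\check{A}}) \to A^\natural \to A \to 0$ produced in \Cref{sec:ModuliSpaceConnections} is precisely $\bbA(\cU_A)$ in the notation of \Cref{Ex:AffineBundles}. Therefore, under the isomorphism $\lambda_F \colon \Ext(A, \bbV(\omega_{\check{A}})) \to \rH^1(A, \alpha^\ast \omega_{\check{A}})$, the class $\lambda_F(A^\natural)$ is exactly $[\cU_A]$. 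Applying $p_F$ then gives $\lambda_{F/S}(A^\natural) = p_F([\cU_A]) = c(\cU_A)$, the global section of $\rR^1 \alpha_\ast \alpha^\ast \omega_{\check{A}}$ attached to the canonical extension. The key input is now \Cref{Prop:ComputationIsomorphismClassCanonicalExtension}\eqref{p:cicce-2}, which identifies $c(\cU_A)$, via the projection formula isomorphism $\rR^1 \alpha_\ast \alpha^\ast \omega_{\check{A}} \cong \cHom(\Lie \check{A}, \rR^1 \alpha_\ast \cO_A)$, with the map $\Phi_A \colon \Lie \check{A} \to \rR^1 \alpha_\ast \cO_A$ from \Cref{sec:LieAlgebraDualAbelianScheme}.

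To conclude I would match up the two descriptions of the linear map governing the push-out. On one side, $\lambda_{F/S}(A^\natural)$ is a global section of $\rR^1 \alpha_\ast \alpha^\ast \omega_{\check{A}} \cong \cHom(\Lie\check{A}, \rR^1\alpha_\ast\cO_A)$, i.e.\ a homomorphism $\Lie\check{A} \to \rR^1\alpha_\ast\cO_A$, and we have just seen it equals $\Phi_A$. On the other side, the homomorphism $(\rR^1\alpha_\ast\cO_A)^\vee \to \omega_{\check{A}}$ through which the push-out is taken is the one obtained from this global section by the canonical identification $\Gamma(S, \cHom(\Lie\check{A}, \rR^1\alpha_\ast\cO_A)) = \Hom((\rR^1\alpha_\ast\cO_A)^\vee, \omega_{\check{A}})$ (transpose, using $\omega_{\check{A}} = (\Lie\check{A})^\vee$ and reflexivity of vector bundles). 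Under this identification $\Phi_A \colon \Lie\check{A} \xrightarrow{\sim} \rR^1\alpha_\ast\cO_A$ corresponds to its transpose inverse, which is precisely $\Phi_A^\vee \colon (\rR^1\alpha_\ast\cO_A)^\vee \xrightarrow{\sim} (\Lie\check{A})^\vee = \omega_{\check{A}}$. This gives the claimed formula.

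The only mild subtlety — and what I would be most careful about — is bookkeeping of the several identifications: the compatibility of $\lambda_F$ with the isomorphism $\rH^1_{\textup{fppf}}(A,\bbV(F)) \cong \rH^1(A,\alpha^\ast F)$ of \Cref{Ex:AffineBundles}, and the passage between "global section of a $\cHom$-sheaf" and "honest homomorphism of the underlying modules", together with the duality conventions that turn $\Phi_A$ into $\Phi_A^\vee$. None of these is hard, but one must keep the variance straight so that the final map is $\Phi_A^\vee$ and not $\Phi_A^{-1}$ composed with something. Since the theorem is explicitly stated as "an immediate consequence" of \Cref{Prop:ComputationIsomorphismClassCanonicalExtension}\eqref{p:cicce-2}, the proof is short once these identifications are spelled out.
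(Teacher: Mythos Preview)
Your approach is correct and is precisely the paper's: the statement is deduced immediately from \Cref{Prop:ComputationIsomorphismClassCanonicalExtension}\eqref{p:cicce-2} by observing that $\lambda_{F/S}(A^\natural) = p_F([\cU_A]) = c(\cU_A) = \Phi_A$ and then reading this class as a homomorphism $(\rR^1\alpha_\ast\cO_A)^\vee \to \omega_{\check{A}}$. One small wording fix: under the identification $\Gamma(S, \cHom(\Lie\check{A}, \rR^1\alpha_\ast\cO_A)) \cong \Hom((\rR^1\alpha_\ast\cO_A)^\vee, \omega_{\check{A}})$ the map $\Phi_A$ corresponds to its \emph{transpose} $\Phi_A^\vee$, not its ``transpose inverse'' --- no inversion is involved, and since this is exactly the variance point you flagged as delicate, it is worth stating cleanly.
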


As a consequence, one obtains a down-to-earth description of how vector extensions are constructed. For a homomorphism of $\cO_S$-modules $\phi \colon \omega_{\check{A}} \to F$, let $E_\phi$ be the push-out of $A^\natural$ along $\phi$. 

\begin{corollary}\label{Prop:ModuliSpaceConnectionIsUniversalExtension} The map
$\Hom(\omega_{\check{A}}, F) \to\Ext(A, F)$, $\phi \mapsto [E_\phi]$ 
is the inverse of $(\Phi_{A}^\vee \otimes \id _F) \circ \lambda_{F / S}$.
\end{corollary}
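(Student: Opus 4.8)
The plan is to deduce \Cref{Prop:ModuliSpaceConnectionIsUniversalExtension} formally from \Cref{Thm:CanonicalIsomorphismUniversalVectorExtensionModuliSpaceConnections} together with the universal property of $\rE(A)$ already spelled out above. First I would recall the basic chain: for any vector bundle $F$ on $S$, the composite $\lambda_{F/S} = p_F \circ \lambda_F \colon \Ext(A, F) \to \rH^0(S, \rR^1 \alpha_\ast \alpha^\ast F)$ is bijective by \cite[Proposition 1.10]{MazurMessing}, and it is natural in $F$: for a homomorphism $\psi \colon F \to F'$ the square relating push-out along $\psi$ and $\id \otimes \psi$ on cohomology commutes (this naturality diagram is displayed in the text right before \Cref{Thm:CanonicalIsomorphismUniversalVectorExtensionModuliSpaceConnections}).

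Next I would set up the two maps to be compared. On one side we have $\phi \mapsto [E_\phi]$, where $E_\phi$ is the push-out of $A^\natural$ along $\phi \colon \omega_{\check A} \to F$. By \Cref{Thm:CanonicalIsomorphismUniversalVectorExtensionModuliSpaceConnections}, $A^\natural$ is itself the push-out of $\rE(A)$ along $\Phi_A^\vee \colon (\rR^1\alpha_\ast \cO_A)^\vee \stackrel{\lowsim}{\to} \omega_{\check A}$; hence $E_\phi$ is the push-out of $\rE(A)$ along $\phi \circ \Phi_A^\vee$, because push-out is transitive (composing the two push-out squares). On the other side, the universal property of $\rE(A)$ says precisely that the assignment sending a class $\xi \in \rH^0(S, \rR^1\alpha_\ast\alpha^\ast F)$, viewed as a homomorphism $\xi \colon (\rR^1\alpha_\ast\cO_A)^\vee \to F$, to the push-out of $\rE(A)$ along $\xi$, is the inverse of $\lambda_{F/S}$. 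Equivalently, $\lambda_{F/S}$ of the push-out of $\rE(A)$ along any $\xi \colon (\rR^1\alpha_\ast\cO_A)^\vee \to F$ equals $\xi$ (under the identification $\Hom((\rR^1\alpha_\ast\cO_A)^\vee, F) = \rH^0(S,\rR^1\alpha_\ast\alpha^\ast F)$ via the projection formula).

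Putting these together: $\lambda_{F/S}([E_\phi]) = \lambda_{F/S}$ of the push-out of $\rE(A)$ along $\phi \circ \Phi_A^\vee$, which by the universal property equals $\phi \circ \Phi_A^\vee$, i.e.\ $(\Phi_A^\vee \otimes \id_F)(\phi)$ under the projection-formula identification $\Hom(\omega_{\check A}, F) \hookrightarrow \Hom((\rR^1\alpha_\ast\cO_A)^\vee, F) = \rH^0(S, \rR^1\alpha_\ast\alpha^\ast F)$. Hence $(\Phi_A^\vee \otimes \id_F) \circ \lambda_{F/S}$ composed with $\phi \mapsto [E_\phi]$ sends $\phi$ to $\phi$, so the two maps are mutually inverse. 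I would also remark that $\phi \mapsto [E_\phi]$ lands in $\Ext(A,F)$ and not merely in $\rH^1(A,\alpha^\ast F)$ because push-out of an extension is an extension, and that it is $\Gamma(S,\cO_S)$-linear since all the operations involved (push-out, Baer sum, $\lambda_{F/S}$) are; this makes the bijectivity statement meaningful as an isomorphism of $\Gamma(S,\cO_S)$-modules.

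The only genuinely delicate point is bookkeeping the identification $\Hom(\omega_{\check A}, F) = \Hom((\Lie\check A)^\vee, F)$ versus $\rH^0(S, \rR^1\alpha_\ast\alpha^\ast F) = \rH^0(S, \rR^1\alpha_\ast\cO_A \otimes F) = \Hom((\rR^1\alpha_\ast\cO_A)^\vee, F)$ (projection formula) and checking that $\Phi_A^\vee \otimes \id_F$ really corresponds to precomposition with $\Phi_A^\vee$ under these identifications, i.e.\ that the tensor notation $\Phi_A^\vee \otimes \id_F$ and the composition notation $(-) \circ \Phi_A^\vee$ agree; this is a standard but slightly fussy transpose/duality verification, and it is where the bulk of the (short) write-up would go. Everything else is a diagram chase assembling the transitivity of push-out with the two universal properties, so I expect no real obstacle beyond this identification.
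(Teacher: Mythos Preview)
Your proposal is correct and matches the paper's approach: the paper offers no explicit proof of this corollary, presenting it simply ``as a consequence'' of \Cref{Thm:CanonicalIsomorphismUniversalVectorExtensionModuliSpaceConnections} together with the universal property of $\rE(A)$, and your argument via transitivity of push-out is exactly the intended expansion. The one point to tighten is the direction of the identification you flag at the end: as written, you compute $\lambda_{F/S}([E_\phi]) = \phi \circ \Phi_A^\vee$ and then want $(\Phi_A^\vee \otimes \id_F)$ applied to this to return $\phi$, so make sure in your write-up that $(\Phi_A^\vee \otimes \id_F)$ is interpreted as the map $\rH^0(S,\rR^1\alpha_\ast\alpha^\ast F) \to \Hom(\omega_{\check A},F)$ inverse to $\phi \mapsto \phi \circ \Phi_A^\vee$ (equivalently, the map induced on global sections by the inverse of $\Phi_A \otimes \id_F$ from \Cref{Lemma:TechnicalLemmaUniversalPropertyCanonicalExtension}\eqref{l:tlupce-1}), rather than precomposition by $\Phi_A^\vee$ itself.
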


Observe that the affine bundle underlying $E_\phi$ is $\bbP(\cF_\phi) \smallsetminus \bbP(\alpha^\ast F)$, where $(\cF_\phi)$ is the push-out of $(\cU_{A})$ along $\phi$.

\section{Preliminaries on Tate--Raynaud uniformization}  \label{sec:TateRaynaudUniformization}

Let $K$ be a complete non-trivially valued non-Archimedean field and $R$ its ring of integers.

\subsection{Raynaud's generic fiber of a formal abelian scheme} Let $\cS$ be an admissible formal $R$-scheme, $\cB$  a formal abelian scheme over the formal $R$-scheme $\cS$, $\check{\cB}$ its dual  and $\cL_{\cB}$ the Poincar\'e bundle on $\cB \times_{\cS} \check{\cB}$. As customary, Raynaud's generic fibers of formal schemes are referred to by straight letters (as opposed to curly ones for formal schemes). More explicitly, let $S$, $B$ and $\check{B}$ be  Raynaud's generic fibers of $\cS$, $\cB$ and $\check{\cB}$. Let $\cL_{B}$ be the line bundle on $B \times_S \check{B}$ deduced from $\cL_{\cB}$. The $K$-analytic space $\check{B}$ represents the functor associating with a $S$-analytic space the group of isomorphism classes of homogeneous line bundles on $B \times_S S'$.  Moreover, the universal object is the line bundle $\cL_B$ on $B \times_S \check{B}$ (see \cite[Proposition 6.2]{BoschLutkebohmertDegenerating}). Let $\beta \colon B \to S$ and $\check{\beta} \colon \check{B} \to S$ be the structural morphisms. For an $S$-analytic space $S' \to S$ and morphisms $b \colon S' \to B$, $\check{b} \colon S' \to \check{B}$ of $S$-analytic spaces, let 
\[ \cL_{B, (b, \check{b})} := (b, \check{b})^\ast \cL_B\]
be the line bundle on $S'$ obtained by pulling back the Poincar\'e bundle $\cL_B$ on $B \times_S \check{B}$ along the morphism $(b, \check{b}) \colon S' \to B \times_S \check{B}$.

\subsection{Datum of a toric extension} Let $\check{\Lambda}$ be a free abelian group of rank equal to that of $\Lambda$, $\check{\Lambda}_S$ the constant $S$-analytic group with value $\check{\Lambda}$ and $T$ the split $S$-torus with group of characters $\check{\Lambda}$. From now on suppose that $S$ is connected, so that sections of the morphism $\check{\Lambda}_S \to S$ are naturally in one-to-one correspondence with elements $\check{\Lambda}$ and will be henceforth identified with those. Let  $\check{c} \colon \check{\Lambda}_S \to \check{B}$ be a morphism of $S$-analytic groups, and consider the extension $\epsilon \colon E \to S$ of the proper $K$-analytic group $B$ by the torus $T$ determined by $\check{c}$:
\[ 0 \too T \too E \stackrel{p}{\too} B \too 0. \]

The extension $E$ is described as follows. For an $S$-analytic space $S'$, an $S'$-valued point $g$ of the $S$-analytic space $E$ is the datum of 
\begin{itemize}
\item an $S'$-valued point $b = p(g)$ of  $B$ and,
\item for $\check{\chi} \in \check{\Lambda}$, a trivialization $ \langle g, \check{\chi} \rangle_E$ of the line bundle $\cL_{B, (b, \check{c}(\check{\chi}))}$ on $S'$.
\end{itemize}
 Moreover, the trivializations are required to satisfy the following compatibility: for $\check{\chi}, \check{\chi}' \in \check{\Lambda}$, 
\[\left\langle g, \check{\chi}  +  \check{\chi}' \right\rangle_E = \left\langle g, \check{\chi} \right\rangle_E \otimes \left\langle g, \check{\chi}' \right\rangle_E,\]
where the equality is meant to be understood via the isomorphism
\begin{equation} \label{Eq:IsomorphismEivingMultiplication}
   \cL_{B, (b, \check{c}(\check{\chi}))} \otimes  \cL_{B, (b, \check{c}(\check{\chi}'))} \cong  \cL_{B, (b, \check{c}(\check{\chi} + \check{\chi}'))} 
\end{equation}
induced by the implied rigidification of the homogeneous line bundle $\cL_B$.

\subsection{Dual datum} Let $\Lambda$ be a free abelian group of finite rank, $\Lambda_S$ the constant $S$-analytic group with value $\Lambda$, $\check{T}$ the split $S$-torus with group of characters $\Lambda$ and $i \colon \Lambda_S \to E$ a morphism of $S$-analytic groups which is a closed immersion.  See the abeloid variety $B$ as the dual of $\check{B}$; then the group morphism $c = p \circ i \colon M_S \to B$ determines an extension $\check{E}$ of the proper $S$-analytic group $\check{B}$ by the torus $\check{T}$:
 \[ 0 \too \check{T} \too \check{E} \stackrel{\check{p}}{\too} \check{B} \too 0. \]
 
Just to fix notation, for an $S$-analytic space $S'$, an $S'$-valued point $\check{g}$ of $\check{E}$ corresponds to the datum of
\begin{itemize}
\item an $S'$-valued point $\check{b} = \check{p}(\check{g})$ of  $\check{B}$ and,
\item for $\chi \in \Lambda$, a trivialization $ \langle \chi, \check{g} \rangle_{\check{E}}$ of the line bundle $\cL_{B, (c(\chi), \check{b})}$ on $S'$.
\end{itemize}
As before, the trivializations are subsumed to the relation, for $\chi, \chi' \in \Lambda$,
\[ \langle \chi + \chi', \check{g} \rangle_{\check{E}} = \langle \chi, \check{g} \rangle_{\check{E}} \otimes \langle \chi', \check{g} \rangle_{\check{E}} .\]
Now, for $\check{\chi} \in \check{\Lambda}$, by the symmetry of the Poincar\'e bundle, there is a unique  $S$-valued point $\check{\imath}(\check{\chi})$ of $\check{E}$ such that, for $\chi \in \Lambda$,
\[ \left\langle \chi, \check{\imath}(\check{\chi}) \right\rangle_{\check{E}} = \left\langle i(\chi), \check{\chi} \right\rangle_E.\]
This defines an injective morphism of $S$-analytic groups $\check{\imath} \colon \check{\Lambda}_S \to \check{E}$. 

In an attempt of unburdening the (already overwhelming) notation, in what follows $\Lambda_S$ is identified with the image of $i$, and the subscript $E$ is dropped from the pairing $\langle -, -\rangle_E$, and similarly for $\check{\Lambda}_S$, $\check{\imath}$ and $\langle -, -\rangle_{\check{E}}$.

\subsection{Quotient} \label{sec:QuotientAbeloidVariety}
Since the subgroup $\Lambda_S$ is closed (hence fiberwise discrete) in $E$, the (topological) quotient $A := E / \Lambda_S$ exists. Moreover, assume that the structural morphism $\alpha \colon A \to S$ is proper. Under these working hypotheses, the bilinear pairing $\langle -, - \rangle$ on $\Lambda \times \check{\Lambda}$ is non-degenerate, the subgroup  $\check{\Lambda}_S$ is closed in $\check{E}$, the quotient $\check{A} := \check{E} / \check{\Lambda}_S$ exists, and the structural morphism $\check{\alpha} \colon \check{A} \to S$ is proper (see \cite[Proposition 3.4]{BoschLutkebohmertDegenerating}). Let $u \colon E \to A$ and $\check{u} \colon \check{E} \to \check{A}$ be the quotient maps. The situation is summarized in the following diagrams:
\begin{center}
\begin{tikzcd}
& \Lambda_S \ar[d, "i"] \ar[dr, "c"] & \\
T \ar[r] & E \ar[r, "p"] \ar[d, "u"] & B \\
& A\rlap{,}
\end{tikzcd} \hspace{60pt}
\begin{tikzcd}
& \check{\Lambda}_S \ar[d, "\check{\imath}"] \ar[dr, "\check{c}"] & \\
\check{T} \ar[r] & \check{E} \ar[r, "\check{p}"] \ar[d, "\check{u}"] & \check{B} \\
& \check{A}\rlap{.}
\end{tikzcd}
\end{center}

\subsection{Coherent sheaves on the quotient} Descent of modules along the morphism $u$ can be restated in terms of $\Lambda_S$-linearizations. More precisely, the datum of a coherent $\cO_A$-module is equivalent to that of a coherent $\cO_E$-module $V$ endowed with a $\Lambda_S$-linearization 
\[\lambda \colon \pr_E^\ast V \too \sigma^\ast V, \]
where $\sigma, \pr_E \colon \Lambda_S \times_S E \to E$ are, respectively, the restriction to $\Lambda_S \times_S E$ of the group law of $E$ and the projection onto the second factor. Quite concretely, the group $\Lambda_S$ being constant, the datum of a $\Lambda_S$-linearization of a coherent $\cO_E$-module $V$ boils down to that, for $\chi \in \Lambda$, of an isomorphism of $\cO_E$-modules
\[ \lambda_\chi \colon V \too \tr_\chi^\ast V,\]
where $\tr_\chi$ is the translation by $\chi$ on $E$. Additionally, the collection of isomorphisms $\lambda_\chi$ is required to fulfil the following compatibility, for $\chi, \chi' \in \Lambda$:
\[ \lambda_{\chi + \chi'} = \tr_\chi^\ast \lambda_{\chi'} \circ \lambda_\chi.\]

\subsection{Homogeneous line bundles}  \label{sec:HomogeneousLineBundlesOnTheUniversalCover} Owing to \cite[Theorem 6.7]{BoschLutkebohmertDegenerating},  given a homogeneous line bundle $L$ on $A$, its pull-back $u^\ast L$ on $E$ is isomorphic to $p^\ast M$ for some line bundle $M$ on $B$. Moreover, the natural rigidification 
\[ \pr_1^\ast p^\ast M \otimes \pr_2^\ast p^\ast M \too \mu_E^\ast p^\ast M,\]
where $\mu_E$ is the group law on $E$, is equivariant with respect to the $\Lambda_S$-linearization $\lambda$ on $p^\ast M$ induced by the isomorphism $u^\ast L \cong p^\ast M$. It follows that the $\Lambda_S$-linearization $\lambda$ can be expressed as the datum of isomorphisms, for $\chi \in \Lambda$,
\[ \lambda_\chi \colon p^\ast M \too \tr_\chi^\ast p^\ast M, \quad s \longmapsto s \otimes r(\chi),\]
where the isomorphism $\tr_\chi^\ast p^\ast M \cong p^\ast M \otimes \epsilon^\ast c(\chi)^\ast M$ coming from the homogeneity of $M$ is taken into account (recall that $\epsilon$ is the structural morphism of $E$) and $r$ is a trivialization of the line bundle $c^\ast M$ on $\Lambda_S$. What is more, the trivialization $r$ must satisfy, for $\chi, \chi' \in \Lambda$, the relation
\[ r(\chi) \otimes r(\chi') = r(\chi + \chi'),\]
where, as is customary at this stage, the above equality is meant to be understood via the isomorphism $c(\chi)^\ast M \otimes c(\chi')^\ast M \cong c(\chi + \chi')^\ast M$ given by the rigidification of $M$.

\subsection{Duality} \label{sec:DualityAbeloidVariety}  By \cite[Theorem 6.8]{BoschLutkebohmertDegenerating} the $S$-analytic group $\check{A}$ represents the functor associating with an $S$-analytic space $S'$ the set of isomorphism classes of homogeneous line bundles on $A \times_S S'$. Furthermore, let $\cL_A$ be the Poincar\'e bundle   on $A \times_S \check{A}$. According to \emph{loc.~cit.}, there is a (necessarily unique) isomorphism of line bundles
\[ \xi \colon (u, \check{u})^\ast \cL_A \stackrel{\lowsim}{\too} \cL_E:= (p, \check{p})^\ast \cL_B \]
on $E \times_S \check{E}$ compatible with the implied rigidifications of the homomogeneous line bundles $\cL_A$ and $\cL_B$. The isomorphism $\xi$ endows the line bundle $\cL_E$ with a $(\Lambda \times \check{\Lambda})_S$-linearization $\lambda$, which can be described as follows. For characters $\chi \in \Lambda$ and $\check{\chi} \in \check{\Lambda}$, an $S$-analytic space $S'$ and $S'$-valued points $x$ of $E$ and $\check{x}$ of $\check{E}$, the isomorphism of $\cO_{S'}$-modules
\[ \lambda_{(\chi, \chi'), (x, x')} \colon \cL_{E, (x, \check{x})} \stackrel{\lowsim}{\too} \cL_{E, (x + \chi, \check{x} + \check{\chi})}\]
induced by the linearization $\lambda$ is
\begin{equation} \label{Eq:LinearizationPoincareBundle}
v \longmapsto \left(\left\langle x, \check{\chi}\right\rangle_E \otimes \left\langle \chi, \check{\chi}\right\rangle_{E}\right) \otimes \left( \left\langle \chi, \check{x} \right\rangle_{\check{E}} \otimes v\right).
\end{equation}
In order to make sense of \eqref{Eq:LinearizationPoincareBundle}, observe that $\langle x, \check{\chi}\rangle_E$ is by definition a section of the line bundle $\cL_{E, (x, \check{\chi})}$, while $\langle \chi, \check{\chi}\rangle_{E}$ is a section of the line bundle $\cL_{E, (\chi, \check{\chi})}$ and $\langle x, \check{\chi}\rangle_E \otimes \langle \chi, \check{\chi}\rangle_{E}$ is seen as a section of the line bundle $\cL_{E, (x + \chi, \check{\chi})}$ via the isomorphism of $\cO_{S'}$-modules
\[  \cL_{E, (x, \check{\chi})} \otimes \cL_{E, (\chi, \check{\chi})} \cong \cL_{E, (x + \chi, \check{\chi})} \]
induced by the rigidification of the homogeneous line bundle $\cL_B$. Arguing similarly, $\langle \chi, \check{x} \rangle_{\check{E}} \otimes v$ is a section of line bundle $\cL_{E, (x + \chi, x')}$, so that the right-hand side of \eqref{Eq:LinearizationPoincareBundle} is the section of the line bundle $\cL_{E, (x + \chi, \check{x} + \check{\chi})}$ via the isomorphism of $\cO_{S'}$-modules
\[  \cL_{E, (x + \chi, \check{\chi})} \otimes \cL_{E, (x + \chi, \check{x})} \cong  \cL_{E, (x + \chi, \check{x} + \check{\chi})}. \]

\section{The universal vector extension of an abeloid variety} 

Let $K$ be a non-trivially valued complete non-Archimedean field and $S$ a $K$-analytic space.

\subsection{The canonical extension} 

\subsubsection{Definition} Let $A$ be an abeloid variety over $S$, that is, a proper and smooth $S$-analytic group with (geometrically) connected fibers. Suppose that the functor associating with an $S$-analytic space $S'$ the set of isomorphism classes of (fiberwise) homogeneous line bundles on $A_{S'}$ is represented by an abeloid variety $\check{A}$ over $S$. For instance, this is the case if $S$ is a $K$-rational point (see \cite[Corollary 7.6.5]{LutkebohmertCurves}) or if $\check{A}$ admits a uniformization as the one described in \Cref{sec:TateRaynaudUniformization} (see \cite[Theorem 6.8]{BoschLutkebohmertDegenerating}).  Under this assumption, translating the arguments of \Cref{Sec:UniversalExtensionAbelianScheme} into the rigid analytic framework permits one to define the canonical extension $(\cU_A)$ on the abeloid variety $A$ and the moduli space $A^\natural$ of rank $1$ connections on $A$.

\subsubsection{Canonical extension on the universal cover} From now on, and up until \Cref{sec:UniversalCoverAffineBundles}, the abeloid variety $A$ is supposed to admit a uniformization as the one described in \Cref{sec:TateRaynaudUniformization}. We borrow notation  introduced therein and consider the canonical extension $(\cU_B)$ on (Raynaud's generic fiber of) the formal abelian scheme $B$.

\begin{definition} The pull-back of differential forms along the morphism $\check{p} \colon \check{E} \to \check{B}$ induces a homomorphism of $\cO_S$-modules $\rd \check{p} \colon \omega_{\check{B}} \to \omega_{\check{E}}$.  The short exact sequence of $\cO_E$-modules
\begin{equation}
\tag{$\cU_E$} 0 \too \epsilon^\ast \omega_{\check{E}} \too \cU_{E} \too \cO_E \too 0
\end{equation}
obtained as the push-out of short exact sequence of $\cO_E$-modules $p^\ast (\cU_B)$ along $\rd \check{p}$ is called the \emph{canonical extension} of $\cO_E$.
\end{definition}

\subsubsection{Alternative description} \label{sec:AlternativeDescriptionCanonicalExtensionUniversalCover}To perform `explicit' computations, it is often useful to have at hand a down-to-earth expression for $\cU_E$, similar to that in \Cref{Rmk:AlternativeDescriptionCanonicalExtension}. Let $X = A$, $B$, $E$ and, respectively, $\check{X} = \check{A}$, $\check{B}$, $\check{E}$. Let $\check{X}_1 = \check{A}_1$, $\check{B}_1$, $\check{E}_1$ be the first-order thickenings of $\check{X}$, and
\begin{align*}
\iota_X \colon X \times_S \check{X}_1 \too X \times_S \check{X}, \quad \pi_X \colon X \times_S \check{X}_1 &\too X, 
\end{align*}
respectively, the closed immersion and the projection onto the first factor. With this notation, the considerations in \Cref{Rmk:AlternativeDescriptionCanonicalExtension} furnish isomorphisms
\begin{align*}
\cU_A \cong \pi_{A \ast} \iota^\ast_A \cL_A, \quad \cU_B \cong \pi_{B \ast} \iota^\ast_B \cL_B,
\end{align*}
where $\cL_A$ and $\cL_B$ are, respectively, the Poincar\'e bundles on $A \times_S \check{A}$ and $B \times_S \check{B}$.  Recall the isomorphism 
\[ \xi \colon (u, \check{u})^\ast \cL_A \stackrel{\lowsim}{\too} \cL_E := (p, \check{p})^\ast \cL_B \]
of line bundles on $E \times_S \check{E}$ considered in \Cref{sec:DualityAbeloidVariety}. By definition, the canonical extension $\cU_E$ is the push-forward along the morphism $\pi_E$ of the line bundle $\iota^\ast_E \cL_E$ on $E \times_S \check{E}_1$:
\[ \cU_E = \pi_{E \ast} \iota^\ast_E \cL_E.\]
Pushing forward the isomorphism $\iota_E^\ast \xi$ along the map $\pi_E$ yields an isomorphism of $\cO_E$-modules
\[ \pi_{E \ast} \iota^\ast_E \xi \colon  u^\ast \cU_{A} \stackrel{\lowsim}{\too} \cU_E.\]

\subsection{Linearization of the canonical extension} \label{Sec:LinearizationCanonicalExtension} By means of the isomorphism $u^\ast \cU_{A} \cong \cU_E$ described above, the canonical extension $\cU_E$ acquires a natural $\Lambda_S$-linearization. The task undertaken here is to give an explicit expression for it. Unfortunately, this point is as crucial as dreadfully technical.

\subsubsection{} \label{sec:LinearizationPoincareBundleThickeningUniversalCover} Describing the $\Lambda_S$-linearization of the line bundle $\iota_E^\ast \cL_E$ is easily achieved. Indeed, such a linearization is the pull-back along the morphism $\iota_E$ of the $(\Lambda \times \check{\Lambda})_S$-linearization of the line bundle $\cL_E$. Let $\check{\jmath} \colon \check{E}_1 \to \check{E}$ denote the closed immersion and, for $\chi \in \Lambda$, consider the trivialization $\langle \chi, \check{\jmath} \rangle$ of the line bundle 
\[
\cL_{E, (\chi, \check{\jmath}\,)} = \left(\chi, \id_{\check{E}_1}\right)^\ast \iota_E^\ast \cL_E\]
 on $\check{E}_1$. Evaluating \eqref{Eq:LinearizationPoincareBundle} at $\check{\chi} = 0$, $x = \id_E$ and $\check{x} = \check{\jmath}$ shows that the $\Lambda_S$-linearization of the line bundle $\iota_E^\ast \cL_E$ is given, for $\chi \in \Lambda$, by the isomorphism
\begin{equation} \label{Eq:LinearizationPoincareBundleOnThickening}
\iota_E^\ast \cL_E \too \left(\tr_\chi, \id_{\check{E}_1}\right)^\ast \iota_E^\ast \cL_E, \quad v \longmapsto  v \otimes \langle \chi, \check{\jmath}\,\rangle,
\end{equation}
where $\tr_\chi$ is the translation by $\chi$ on $E$. To make sense of the formula, notice that the homogeneity of the line bundle $\iota_E^\ast \cL_E$ furnishes an isomorphism
\begin{equation} \label{Eq:TranslationPoincareBundleUniversalCover}
  \left(\tr_\chi, \id\right)^\ast \iota_E^\ast \cL_E  \cong \iota_E^\ast \cL_E \otimes \epsilon_1^\ast \cL_{E, (\chi, \check{\jmath}\,)},
\end{equation}
where $\epsilon_1 \colon E \times \check{E}_1 \to \check{E}_1$ is the projection onto the second factor.

\subsubsection{} The $\Lambda_S$-linearization of the unipotent bundle $\cU_E$ is somewhat trickier to come by. Rather formally, for $\chi \in \Lambda$, the isomorphism $\cU_E \to \tr_\chi^\ast \cU_E$ is just the push-forward of the isomorphism \eqref{Eq:LinearizationPoincareBundleOnThickening} along the $\Lambda_S$-equivariant morphism $\pi_E$. The key observation (see \Cref{Prop:TensorProductBecomesBaerSum}) is that the isomorphism \eqref{Eq:TranslationPoincareBundleUniversalCover} becomes, after pushing forward along $\pi_E$, an isomorphism of $\cO_E$-modules
\[ \tr_\chi^\ast \cU_E \cong \cU_E +_\rB \epsilon^\ast \chi^\ast \cU_E. \]
(Notice that both the vector bundles $\cU_E$ and $\epsilon^\ast \chi^\ast \cU_E$ are extensions of $\cO_E$ by $\epsilon^\ast \omega_{\check{E}}$; thus their Baer sum is well defined.) 

Accordingly, the tensor product in \eqref{Eq:LinearizationPoincareBundleOnThickening} is now replaced by a `sum'. To be more precise about what this possibly means, recall how the Baer sum of the extensions $(\cU_E)$ and $(\epsilon^\ast \chi^\ast \cU_E)$ is constructed. First, consider the $\cO_E$-submodule 
\[ V \subseteq \cU_E \oplus \epsilon^\ast \chi^\ast \cU_E \]
made of pairs $(v, w)$ whose components have same projection in $\cO_E$. The $\cO_E$-module $V$ is an extension of $\cO_E$ by $\epsilon^\ast (\omega_{\check{E}} \oplus \omega_{\check{E}})$, and the Baer sum in question is its push-out along the sum map $\omega_{\check{E}} \oplus \omega_{\check{E}} \to \omega_{\check{E}}$. For  a pair $(v, w)$, the aforementioned `sum' $v + w$ is the image of $(v, w)$ in $\cU_E +_\rB \epsilon^\ast \chi^\ast \cU_E$. Summing up, we proved the following. 

\begin{proposition} \label{Prop:LinearizationCanonicalExtension}With the notation above, the $\Lambda_S$-linearization of the unipotent bundle $\cU_E$ is given, for $\chi \in \Lambda$, by the isomorphism
\begin{equation} \label{Eq:LinearizationCanonicalExtension}
\cU_E \too \tr_\chi^\ast \cU_E, \quad v \longmapsto v + q(v).\left\langle \chi, \check{\jmath}\, \right\rangle,
\end{equation}
where $q \colon \cU_E \to \cO_E$ is the projection in the datum of the extension $(\cU_E)$.
\end{proposition}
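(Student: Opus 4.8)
The plan is to assemble \Cref{Prop:LinearizationCanonicalExtension} from the two inputs already set up in the surrounding subsections, namely the explicit $\Lambda_S$-linearization of the line bundle $\iota_E^\ast \cL_E$ recorded in \eqref{Eq:LinearizationPoincareBundleOnThickening}, together with the compatibility of push-forward along $\pi_E$ with the Baer-sum formalism (the citation to \Cref{Prop:TensorProductBecomesBaerSum}). First I would recall that, by the alternative description in \Cref{sec:AlternativeDescriptionCanonicalExtensionUniversalCover}, we have $\cU_E = \pi_{E\ast}\iota_E^\ast \cL_E$, and that the $\Lambda_S$-linearization on $\cU_E$ is \emph{defined} as the push-forward along the $\Lambda_S$-equivariant morphism $\pi_E$ of the linearization on $\iota_E^\ast\cL_E$. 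So the whole content is to compute what this push-forward does, concretely, on sections.

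The key step is to track the isomorphism \eqref{Eq:TranslationPoincareBundleUniversalCover},
\[
  (\tr_\chi,\id)^\ast \iota_E^\ast\cL_E \cong \iota_E^\ast\cL_E \otimes \epsilon_1^\ast \cL_{E,(\chi,\check{\jmath}\,)},
\]
through $\pi_{E\ast}$. On the left, $\pi_{E\ast}(\tr_\chi,\id)^\ast\iota_E^\ast\cL_E = \tr_\chi^\ast \cU_E$, using that $\pi_E$ is equivariant for translation by $\chi$. On the right, $\pi_{E\ast}$ of a tensor product of a relative line bundle with the pull-back $\epsilon_1^\ast$ of a line bundle on $\check{E}_1$ becomes, by \Cref{Prop:TensorProductBecomesBaerSum}, the Baer sum of extensions, giving $\cU_E +_\rB \epsilon^\ast\chi^\ast\cU_E$; here $\chi^\ast\cU_E$ is exactly $\pi_{E\ast}$ (i.e.\ the global-sections/trivialization datum) of $\cL_{E,(\chi,\check{\jmath}\,)}$ over $\check{E}_1$, which under the identifications of \Cref{sec:AlternativeDescriptionCanonicalExtensionUniversalCover} is the pull-back along $\chi$ of the canonical extension. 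Then I would translate the formula $v \mapsto v \otimes \langle\chi,\check{\jmath}\,\rangle$ of \eqref{Eq:LinearizationPoincareBundleOnThickening}: under push-forward, the operation $\otimes \langle\chi,\check{\jmath}\,\rangle$ turns into the Baer-sum operation, so a local section $v$ of $\cU_E$ is sent to the class of the pair $(v,\, q(v)\!\cdot\!\langle\chi,\check{\jmath}\,\rangle)$ in $\cU_E +_\rB \epsilon^\ast\chi^\ast\cU_E$. This is exactly the asserted map $v \mapsto v + q(v).\langle\chi,\check{\jmath}\,\rangle$, once one unwinds, as the paper does in the paragraph preceding the statement, what the Baer-sum class of such a pair means (the submodule $V$ of pairs with equal image in $\cO_E$, pushed out along the addition map $\omega_{\check{E}}\oplus\omega_{\check{E}}\to\omega_{\check{E}}$). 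The factor $q(v)$ appears precisely because $\langle\chi,\check{\jmath}\,\rangle$ is a section of the line bundle $\cL_{E,(\chi,\check{\jmath}\,)}$ over $\check{E}_1$, i.e.\ of $\epsilon^\ast\chi^\ast$ of the \emph{sub}bundle $\epsilon^\ast\omega_{\check{E}}\subseteq\epsilon^\ast\chi^\ast\cU_E$, and multiplying it by the scalar $q(v)\in\cO_E$ lands its second component in the same fiber of $\cO_E$ as $v$, which is what is needed for the pair to lie in $V$.

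The main obstacle I expect is bookkeeping rather than a genuine difficulty: matching the tensor-vs-Baer-sum dictionary carefully, keeping straight which factor of the tensor product in \eqref{Eq:TranslationPoincareBundleUniversalCover} becomes which summand of the Baer sum, and verifying that the homogeneity isomorphism \eqref{Eq:TranslationPoincareBundleUniversalCover} is the one compatible with the rigidifications so that \Cref{Prop:TensorProductBecomesBaerSum} applies on the nose (in particular that the roles of $\cU_E$ and $\epsilon^\ast\chi^\ast\cU_E$ are not swapped, and that the "$+_\rB$" is w.r.t.\ the common quotient $\cO_E$ and common sub $\epsilon^\ast\omega_{\check{E}}$). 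Since both \eqref{Eq:LinearizationPoincareBundleOnThickening} and the Baer-sum-becomes-sum identity \Cref{Prop:TensorProductBecomesBaerSum} are already available, the argument is short: apply $\pi_{E\ast}$ to \eqref{Eq:LinearizationPoincareBundleOnThickening}, identify source and target via the displayed isomorphisms, and read off the formula. I would therefore present the proof as: recall $\cU_E = \pi_{E\ast}\iota_E^\ast\cL_E$ and that its linearization is $\pi_{E\ast}$ of \eqref{Eq:LinearizationPoincareBundleOnThickening}; invoke \Cref{Prop:TensorProductBecomesBaerSum} to identify $\pi_{E\ast}$ of \eqref{Eq:TranslationPoincareBundleUniversalCover} with $\tr_\chi^\ast\cU_E \cong \cU_E +_\rB \epsilon^\ast\chi^\ast\cU_E$; and conclude that $v \otimes \langle\chi,\check{\jmath}\,\rangle$ push-forwards to the Baer-sum class of $(v,\,q(v).\langle\chi,\check{\jmath}\,\rangle)$, i.e.\ to $v + q(v).\langle\chi,\check{\jmath}\,\rangle$ in the notation fixed just before the statement.
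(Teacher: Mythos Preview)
Your approach is exactly the paper's: the proposition is stated with the words ``Summing up, we proved the following'' after the discussion in the subsection, and that discussion is precisely what you describe---push forward \eqref{Eq:LinearizationPoincareBundleOnThickening} along $\pi_E$, and invoke \Cref{Prop:TensorProductBecomesBaerSum} to convert the tensor product in \eqref{Eq:TranslationPoincareBundleUniversalCover} into a Baer sum.

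One correction to your justification of the factor $q(v)$: the section $\langle\chi,\check{\jmath}\,\rangle$ is \emph{not} a section of the subbundle $\epsilon^\ast\omega_{\check{E}} \subset \epsilon^\ast\chi^\ast\cU_E$. As recorded in \Cref{Rmk:InterpretationInfinitesimalSection}, it is a section of the full extension $\chi^\ast\cU_E$ whose projection to $\cO_S$ equals $\langle\chi,\check{e}\rangle = 1$. This is exactly why $q(v)\cdot\langle\chi,\check{\jmath}\,\rangle$ has projection $q(v)$ in $\cO_E$, matching that of $v$, so that the pair lies in $V$. Had $\langle\chi,\check{\jmath}\,\rangle$ lived in the subbundle, its projection would be $0$, and no scalar multiple would make the pair land in $V$ unless $q(v)=0$.
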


  The meaning of the preceding formula is unveiled by the following.

  \begin{remark} \label{Rmk:InterpretationInfinitesimalSection} The equality $\cU_E = \pi_{E \ast} \iota_E^\ast \cL_E$ implies
    \[ \chi^\ast \cU_E = \check{\epsilon}_{1\ast} \cL_{E, (\chi, \check{\jmath}\,)},\] where $\check{\epsilon}_{1} \colon \check{E}_1 \to S$ is the structural morphism. By means of the preceding, the trivialization $\langle \chi, \check{\jmath}\,\rangle$ of the line bundle $\cL_{E, (\chi, \check{\jmath}\,)}$ can be seen as a section of the vector bundle $\chi^\ast \cU_E$ on $S$.  Moreover, the sum in  expression \eqref{Eq:LinearizationCanonicalExtension} does make sense for the following reason: the projection in $\cO_S = \cL_{E, (\chi, \check{e})}$ of the section $\langle \chi, \check{\jmath}\, \rangle$ of the extension $\chi^\ast \cU_E$ is $\langle \chi, \check{e}\rangle = 1$ (where $\check{e}$ is the neutral section of $\check{E}$);  thus the sections $v$ and $q(v).\langle \chi, \check{\jmath}\, \rangle$ have equal projection in $\cO_S$.
 \end{remark}

\subsection{Universal cover of the universal vector extension} \label{sec:UniversalCoverUniversalVectorExtension}

\begin{definition} The \emph{universal cover} of the universal vector extension $A^\natural$ is 
\[ E^\natural := \bbP(\cU_E) \smallsetminus \bbP(\epsilon^\ast \omega_{\check{E}}).\]
\end{definition}

\begin{remark} When $S$ is a point, the name `universal cover' is well deserved as the topological space $E^\natural$ is contractible (see \Cref{Prop:UniversalCoverContractible}) and comes with a covering map  $E^\natural \to A^\natural$ (see \Cref{Prop:UniversalCoverUniversalVectorExtension}).
\end{remark}

Let $\check{E}_1$ be the first-order thickening of $\check{E}$ along the neutral section and $\check{\jmath} \colon \check{E}_1 \to \check{E}$ the closed immersion. As explained in \Cref{Rmk:InterpretationInfinitesimalSection}, for $\chi \in \Lambda$, the trivialization $\langle \chi, \check{\jmath}\,\rangle$ of the line bundle $\cL_{E, (\chi, \check{\jmath}\,)}$ on $\check{E}_1$ defines a section of the vector bundle $\chi^\ast \cU_E$ on $S$. Moreover, the projection of $\langle \chi, \check{\jmath}\,\rangle$ in $\cO_S$ is $1$; thus the section $\langle \chi, \check{\jmath}\,\rangle$ defines a splitting of the short exact sequence $\chi^\ast (\cU_E)$. Therefore, $\langle \chi, \check{\jmath}\,\rangle$ defines an $S$-valued point $\chi^\natural$ of the universal cover $E^\natural$. Let
\[ i^\natural \colon \Lambda_S \too E^\natural, \quad \chi \longmapsto \chi^\natural := \langle \chi, \check{\jmath}\,\rangle \]
be the so-defined morphism of $S$-analytic spaces. For $\chi \in \Lambda$, the projection of $\chi^\natural$ in $E$ is by definition $\chi$; thus the morphism $i^\natural$ is a closed immersion. The isomorphism of $\cO_E$-modules $u^\ast \cU_A \cong \cU_E$ introduced in \Cref{sec:AlternativeDescriptionCanonicalExtensionUniversalCover} induces an isomorphism of $E$-analytic spaces $A^\natural \times_A E \cong E^\natural$. It follows that $E^\natural$ is naturally endowed with a structure of $S$-analytic space in groups and the natural morphism of $S$-analytic spaces
\[ u^\natural \colon E^\natural \too A^\natural\]
is a group morphism.

\begin{theorem} \label{Prop:UniversalCoverUniversalVectorExtension} The map $i^\natural$ is a group morphism with image the kernel of $u^\natural$; that is, the following sequence of $S$-analytic groups is short exact:
\[ 0 \too \Lambda_S \stackrel{i^\natural}{\too} E^\natural \stackrel{u^\natural}{\too} A^\natural \too 0.\]
\end{theorem}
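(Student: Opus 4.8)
The plan is to verify exactness at each of the three spots of the sequence, working throughout with the $\Lambda_S$-linearization of $\cU_E$ made explicit in \Cref{Prop:LinearizationCanonicalExtension}, and using that descent along $u$ identifies $A^\natural$ with the quotient $E^\natural / \Lambda_S$ in the category of $S$-analytic spaces. Since the isomorphism $u^\ast \cU_A \cong \cU_E$ of \Cref{sec:AlternativeDescriptionCanonicalExtensionUniversalCover} is compatible with the group structures (it is the pull-back along $u$ of the isomorphism defining the group law on $A^\natural$), the morphism $u^\natural$ is automatically a faithfully flat group morphism, so the only substantive points are that $i^\natural$ is a group morphism and that its image is exactly $\ker u^\natural$.

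First I would check that $i^\natural$ is a homomorphism. By construction $\chi^\natural$ is the $S$-point of $E^\natural$ given by the splitting $\langle \chi, \check{\jmath}\,\rangle$ of $\chi^\ast(\cU_E)$, and the group law on $E^\natural$ over a pair of $S$-points $\chi, \chi'$ is computed via the rigidification isomorphism $\check{\psi}$ of \Cref{Rmk:GroupStructureViaRigidification}, which ultimately comes from the rigidification of the Poincaré bundle $\cL_B$ (via \Cref{Ex:PushingForwardRigidificationLineBundle}). Thus $\chi^\natural +_{E^\natural} \chi'^\natural$ is represented by the image of $(\langle \chi, \check{\jmath}\,\rangle, \langle \chi', \check{\jmath}\,\rangle)$ under the pushed-forward rigidification, which by the compatibility relation $\langle \chi, \check{\chi}\rangle \otimes \langle \chi', \check{\chi}\rangle = \langle \chi + \chi', \check{\chi}\rangle$ satisfied by the pairing on $\check{E}$ — applied with $\check{\chi}$ replaced by the infinitesimal section $\check{\jmath}$ — is exactly $\langle \chi + \chi', \check{\jmath}\,\rangle = (\chi+\chi')^\natural$. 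Hence $i^\natural$ is a group morphism, and it is a closed immersion since its composite with $E^\natural \to E$ is $i \colon \Lambda_S \hookrightarrow E$, which is a closed immersion.

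Next, exactness in the middle. That $u^\natural \circ i^\natural = 0$ means: the image of $\chi^\natural$ in $A^\natural = E^\natural/\Lambda_S$ is the zero section. Equivalently, $\chi^\natural$ and $0^\natural = \check{e}$ (the canonical splitting of $\check{e}^\ast(\cU_E)$, by \Cref{Rmk:SplittingCanonicalExtensionByCanonicalDerivation}) lie in the same $\Lambda_S$-orbit — indeed they differ precisely by the action of $\chi$. To see this I would trace through the linearization formula \eqref{Eq:LinearizationCanonicalExtension}: the isomorphism $\cU_E \to \tr_\chi^\ast \cU_E$ sends a section $v$ to $v + q(v)\langle\chi,\check\jmath\,\rangle$, so it carries the canonical splitting $s_0$ (value $q=1$) of the fiber over $0$ to the splitting $s_0 + \langle\chi,\check\jmath\,\rangle$ of the fiber over $\chi$, and by the very definition of $\langle\chi,\check\jmath\,\rangle$ as a section of $\chi^\ast\cU_E$ with projection $1$ (\Cref{Rmk:InterpretationInfinitesimalSection}), this is exactly the splitting defining $\chi^\natural$. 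Thus the $\Lambda_S$-orbit of the zero section of $E^\natural$ is precisely $i^\natural(\Lambda_S)$, so $\ker u^\natural = u^\natural{}^{-1}(0) = i^\natural(\Lambda_S)$; this gives both $u^\natural \circ i^\natural = 0$ and the reverse inclusion $\ker u^\natural \subseteq i^\natural(\Lambda_S)$. Injectivity of $i^\natural$ on the level of $S$-points follows again by composing with $E^\natural \to E$.

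The main obstacle, I expect, is bookkeeping rather than conceptual: one has to be scrupulous about \emph{which} rigidification/linearization isomorphism is being used at each stage and confirm that the group law on $E^\natural$ obtained from $\check\psi$ via push-forward (\Cref{Ex:PushingForwardRigidificationLineBundle}, \Cref{Rmk:GroupStructureViaRigidification}) is genuinely compatible, under the identification $\cU_E = \pi_{E\ast}\iota_E^\ast\cL_E$, with the multiplicativity $\langle -,-\rangle_E$ of the Poincaré-bundle pairing restricted to the infinitesimal thickening $\check E_1$. Once that compatibility is nailed down, the computation that $\chi^\natural + \chi'^\natural = (\chi+\chi')^\natural$ and that the linearization moves $0^\natural$ to $\chi^\natural$ are both immediate from the additivity relations already recorded in \Cref{sec:DualityAbeloidVariety} and \Cref{Sec:LinearizationCanonicalExtension}. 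Faithful flatness of $u^\natural$ and the fact that $A^\natural$ is the quotient then close the argument.
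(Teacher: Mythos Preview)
Your proposal is correct and follows essentially the same route as the paper: both identify the $\Lambda_S$-action on $E^\natural$ (coming from the linearization of $\cU_E$ in \Cref{Prop:LinearizationCanonicalExtension}) with translation by $\chi^\natural$, and deduce the short exact sequence from the fact that $A^\natural$ is the quotient by this action. The only presentational difference is in the verification that $i^\natural$ is a homomorphism: you unwind back to the multiplicativity $\langle \chi, \check{\jmath}\,\rangle \otimes \langle \chi', \check{\jmath}\,\rangle = \langle \chi + \chi', \check{\jmath}\,\rangle$ of the Poincar\'e pairing, whereas the paper observes more directly that, once the linearization is written as $\lambda_\chi \colon v \mapsto v + q(v)\chi^\natural$, the cocycle condition $\lambda_{\chi+\chi'} = \tr_\chi^\ast \lambda_{\chi'} \circ \lambda_\chi$ \emph{is} the identity $\chi^\natural + \chi'^\natural = (\chi+\chi')^\natural$---so additivity comes for free from a property of linearizations already in hand, and the ``bookkeeping obstacle'' you flag is absorbed into \Cref{Prop:LinearizationCanonicalExtension} rather than redone.
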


\begin{proof} The group law on the affine bundle $A^\natural = \bbP(\cU_A) \smallsetminus \bbP(\alpha^\ast \omega_{\check{A}})$ is given by an isomorphism of $\cO_A$-modules
\[ \psi_A \colon  \pr_1^\ast \cU_A +_\rB \pr_2^\ast \cU_A  \stackrel{\lowsim}{\too}  \mu_A^\ast \cU_A,\]
where $\mu_A, \pr_1, \pr_2 \colon A \times_S A \to A$ are, respectively, the group law, the first and the second projection on $A$. According to \Cref{Rmk:GroupStructureViaRigidification}, or better its rigid analytic analogue, such an isomorphism $\psi_A$ is the push-forward along the projection
\[ \left(A \times_S \check{A}_1\right) \times_{\check{A}_1} \left(A \times_S \check{A}_1\right) = A \times_S A \times \check{A}_1 \too A \times_S A \]
 of the isomorphism $(\id_{A} \times \check{\jmath}, \id_{A} \times \check{\jmath})^\ast \phi_A$, where
\[ \phi_A \colon  \pr_1^\ast \cL_A \otimes \pr_2^\ast \cL_A \stackrel{\lowsim}{\too} \mu_A^\ast \cL_A\]
is the rigidification of the Poincar\'e bundle $\cL_A$ and
\[ \left(\id_{A} \times \check{\jmath}, \id_{A} \times \check{\jmath}\,\right) \colon \left(A \times_S \check{A}_1\right) \times_{\check{A}_1} \left(A \times_S \check{A}_1\right) \too \left(A \times_S \check{A}\right) \times_{\check{A}} \left(A \times_S \check{A}\right)
\]
the closed immersion. By design, the group law of $E^\natural$ is constructed from that of $A^\natural$. That is, the group law on the affine bundle $E^\natural =  \bbP(\cU_E) \smallsetminus \bbP(\epsilon^\ast \omega_{\check{E}})$ is defined by the isomorphism of $\cO_E$-modules
\[ \psi_E := (u \times u)^\ast \psi_A \colon  \pr_1^\ast \cU_E +_\rB \pr_2^\ast \cU_E \stackrel{\lowsim}{\too} \mu_E^\ast \cU_E,\]
the isomorphism $u^\ast \cU_A \cong \cU_E$ (and with the obvious notation) being allowed for.
Thanks to \eqref{Eq:LinearizationCanonicalExtension} and with the notation therein, the $\Lambda_S$-linearization of the unipotent bundle $\cU_E$ is the datum of the isomorphisms, for $\chi \in \Lambda$,
\[ \lambda_\chi  \colon v \longmapsto v + q(v) \chi^\natural,\]
where $q \colon \cU_E \to \cO_E$ is the projection in the datum of the extension $(\cU_E)$. 

Given $\chi, \chi' \in \Lambda$, via the isomorphism $\psi_E$, the wanted additivity $\chi^\natural +\chi'^\natural = (\chi + \chi')^\natural$ is seen to be  nothing but the compatibility $\lambda_{\chi + \chi'} = \tr_\chi^\ast \lambda_{\chi'} \circ \lambda_\chi$ to which is subsumed the $\Lambda_S$-linearization of $\cU_E$.

This substantially also proves the remaining assertions. Indeed, the affine bundle $A^\natural$ is the quotient of $E^\natural$ by the natural action of $\Lambda_S$ on it induced by the $\Lambda_S$-linearization of $\cU_E$. Moreover, from this point of view, the map $u^\natural$ is just the quotient morphism, and the above considerations show that the action of $\Lambda_S$ on $E^\natural$ is described, for $\chi \in \Lambda$, as the translation by $\chi^\natural$. This amounts to saying that the sequence of $S$-analytic groups
\[ 0 \too \Lambda_S \stackrel{i^\natural}{\too} E^\natural \stackrel{u^\natural}{\too} A^\natural \too 0\]
is short exact, thus concluding the proof.
\end{proof}

\subsection{A computation on toric bundles} The next task is acquiring a better understanding of the map $i^\natural$. This is done via a computation on toric bundles whose proof is clearer when stated in a broader generality. So let us  momentarily reset 
our notation, and let $X$ be a separated $S$-analytic space, $\Lambda$ a free abelian group of finite rank $n$, $\Lambda \to \Pic(X)$, $\chi \mapsto L_\chi$ a group homomorphism and $P$ the $X$-analytic space whose points $s$ with values on a $X$-analytic space $f \colon X' \to X$ form the set of data, for $\chi \in \Lambda$, of a trivialization $\langle \chi, s \rangle$ of the line bundle $f^\ast L_\chi$. Moreover, the trivializations above satisfy, for $\chi, \chi' \in \Lambda$, the relation\footnote{\label{Footnote:AbuseNotationToricBundle}Several abuses of notation have been perpetrated
here. Rather than isomorphism classes of line bundles, one should fix, for $\chi \in \Lambda$, a line bundle $L_\chi$ and, for $\chi, \chi' \in \Lambda$, isomorphisms $L_\chi \otimes L_{\chi'} \cong L_{\chi + \chi'}$ through which the formula $\langle \chi, s \rangle \otimes \langle \chi, s \rangle = \langle \chi + \chi', s \rangle$ ought to be understood.}
\begin{equation} \label{Eq:RelationTrivializationLineBundles} \langle \chi, s \rangle \otimes \langle \chi, s \rangle = \langle \chi + \chi', s \rangle.
\end{equation}
Let $p \colon P \to X$ be the projection. The split torus $T$ over $S$ with group of characters $\Lambda$ acts naturally on $P$ by the rule defined, for an $S$-analytic space $S'$,  $S'$-valued points $s$ of $P$ and $t$ of $T$ and a character $\chi \in \Lambda$, by 
\[ \langle \chi, ts \rangle = \chi(t) \langle \chi, s \rangle. \]

Let $u \colon S \to P$ be a morphism of $S$-analytic spaces, $P_1$ the first-order thickening of $P$ along the section $u$ and $j \colon P_1 \to P$ the closed immersion. To make the notation more flexible, given a finite morphism of $K$-analytic spaces $Z \to S$ and a coherent $\cO_Z$-module $F$, let us denote again by $F$ its push-forward onto $S$.  Consider the $S$-valued point $x = p(u)$ of $X$, the first-order thickening $X_1$ of $X$ along the section $x$, the closed immersion $i \colon X_1 \to X$ and the following commutative and exact diagram of $\cO_S$-modules:
\begin{equation} \label{Eq:ProofProjectionOntoTrivialPartCanonicalExtension1}
\begin{tikzcd}[column sep=20pt, row sep=15pt]
& 0 \ar[d] & 0 \ar[d] & 0 \ar[d] & \\
0 \ar[r] & x^\ast \Omega^1_{X/S} \otimes x^\ast L_\chi \ar[d, "\rd p \otimes \id"] \ar[r] & i^\ast L_\chi \ar[d] \ar[r]  & x^\ast L_\chi \ar[d, equal] \ar[r] & 0 \\
0 \ar[r] & u^\ast \Omega^1_{P/S} \otimes x^\ast L_\chi \ar[d] \ar[r] & j^\ast p^\ast L_\chi \ar[d] \ar[r] & x^\ast L_\chi \ar[d] \ar[r] & 0 \\
0 \ar[r] & u^\ast \Omega^1_{P/X} \otimes x^\ast L_\chi \ar[d] \ar[r, "\sim"] & j^\ast p^\ast L_\chi / i^\ast L_\chi \ar[d] \ar[r] & 0 \\
&0 & 0\rlap{.}
\end{tikzcd}
\end{equation}

The map $i^\ast L_\chi \to j^\ast p^\ast L_\chi$ is given by adjunction with respect to the morphism $p_1 \colon P_1 \to X_1$ induced by~$p$. The isomorphism $T \times_S P \to  P \times_X P$ given by the action of $T$ yields an isomorphism of $\cO_S$-modules $u^\ast \Omega^1_{P/X} \cong e^\ast \Omega^{1}_{T/S} =:\omega_{T}$, where $e$ is the neutral section of $T$. Consider the homomorphism 
\[ q \colon j^\ast p^\ast L_\chi \too \omega_T   \otimes x^\ast L_\chi \]
 defined via the isomorphisms $u^\ast \Omega^1_{P/X} \otimes x^\ast L_\chi \cong j^\ast p^\ast L_\chi / i^\ast L_\chi$ and $u^\ast \Omega^1_{P/X} \cong \omega_T$.

\begin{proposition} \label{Prop:InfinitesimalSectionToricBundle} With the notation above,
\[ q(\langle \chi, j\rangle) = \chi^\ast \tfrac{\rd z}{z} \otimes u,\]
where $z$ is the coordinate function on $\bbG_{m, S}$ and $\chi$ is seen as a character $T \to \bbG_{m, S}$.  
\end{proposition}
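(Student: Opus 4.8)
The plan is to reduce the statement to the case of a single copy of $\bbG_{m,S}$, where the trivialization $\langle \chi, j\rangle$ is literally the constant section $1$, and then to compute the relevant quotient map explicitly. Concretely, a character $\chi \colon T \to \bbG_{m,S}$ realizes the line bundle $L_\chi$ as the pull-back along $\chi$ (composed with $P \to T$-type data) of the line bundle $\cO(1)$-flavoured object on $\bbG_{m,S}$; more precisely, the construction of $P$ is functorial in the homomorphism $\Lambda \to \Pic(X)$, so it suffices to check the identity after pushing everything forward along the morphism induced by a single character $\chi$. One therefore replaces $\Lambda$ by $\bbZ$, $P$ by (the total space of) $\bbG_{m,X}$, the section $u$ by the unit section, and $\langle \chi, j\rangle$ by the tautological trivialization of $\cO_{\bbG_{m,X}}$ along the unit section thickened to first order.

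First I would make the diagram \eqref{Eq:ProofProjectionOntoTrivialPartCanonicalExtension1} completely explicit in this reduced situation. Here $P = \bbG_{m,X}$, the thickening $P_1$ is $\Spec$ (relative) of $\cO_X[z]/(z-1)^2$, and $j^\ast p^\ast L_\chi$ is the free rank-one module generated by the image of $z$. The sub-object $i^\ast L_\chi$ corresponds to the constant section, i.e. to the class of $1$, and the quotient $j^\ast p^\ast L_\chi / i^\ast L_\chi \cong u^\ast\Omega^1_{P/X}\otimes x^\ast L_\chi$ identifies the class of $z$ (equivalently, of $z-1$) with $\tfrac{\rd z}{z}\big|_{z=1}\otimes u = \rd z|_{z=1}\otimes u$, since $\tfrac{\rd z}{z}$ and $\rd z$ agree at $z=1$. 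Finally, the identification $u^\ast\Omega^1_{P/X} \cong \omega_T$ coming from the isomorphism $T\times_S P \to P\times_X P$ is, in these coordinates, exactly the pull-back of $\tfrac{\rd z}{z}$ along the character $\chi$ — this is the one place where I must be careful that the logarithmic form, not $\rd z$, is the natural generator of $\omega_{\bbG_{m,S}}$, because it is the translation-invariant one. Chasing the section $\langle\chi,j\rangle$, which is the class of $z$ in $j^\ast p^\ast L_\chi$, through these identifications then yields $q(\langle\chi,j\rangle) = \chi^\ast\tfrac{\rd z}{z}\otimes u$.

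The remaining point is to justify the reduction itself, i.e.\ that it is legitimate to compute $q$ after composing with a single character. This is because the formation of $P$, of the thickening $P_1$, of the diagram \eqref{Eq:ProofProjectionOntoTrivialPartCanonicalExtension1}, and of the map $q$ are all compatible with the homomorphism $\Lambda \to \Pic(X)$ in the following sense: the $\chi$-component of everything only sees the line bundle $L_\chi$ and the section $\langle\chi,s\rangle$, and the action of $T$ restricted to the one-parameter subgroup dual to $\chi$ matches the standard $\bbG_m$-action on $\bbG_{m,X}$ under the map $P \to \bbG_{m,X}$ classifying $\langle\chi,-\rangle$. Thus the statement for general $\Lambda$ follows componentwise from the rank-one case.

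I expect the main obstacle to be purely bookkeeping: keeping straight the three identifications $u^\ast\Omega^1_{P/X}\otimes x^\ast L_\chi \cong j^\ast p^\ast L_\chi/i^\ast L_\chi$, $u^\ast\Omega^1_{P/X}\cong \omega_T$ (via the action isomorphism), and $\omega_T \cong$ the span of $\chi^\ast\tfrac{\rd z}{z}$, and verifying that under their composition the distinguished section $\langle\chi,j\rangle$ lands where claimed with the correct normalization — in particular that no spurious unit or sign is introduced, and that the $x^\ast L_\chi$-factor is precisely the trivialization $u$ from \Cref{Rmk:RigidificationAtNeutralSection}-type considerations. Once the coordinates are fixed, each verification is a one-line computation, but the isomorphism $T\times_S P \xrightarrow{\sim} P\times_X P$ and its effect on relative differentials is the step that genuinely requires care.
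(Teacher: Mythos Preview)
Your strategy is the same as the paper's: reduce via the character $\chi$ to a rank-one situation and then compute explicitly in coordinates $\cO[z]/(z-1)^2$, where $z \mapsto z-1 \leftrightarrow \tfrac{\rd z}{z}$. The paper organizes the d\'evissage into three steps rather than two: after reducing to $\Lambda = \bbZ$ it further restricts to the fiber $P_x$ over the section $x$, using the given trivialization $u$ of $x^\ast L_\chi$ to identify $P_x$ with $\bbG_{m,S}$, and only then computes. Your sentence ``replace $P$ by $\bbG_{m,X}$, the section $u$ by the unit section'' skips this: for a nontrivial $L_\chi$ the torsor $P_\chi$ is not $\bbG_{m,X}$, so you cannot write $P_1 = \Spec \cO_X[z]/(z-1)^2$ globally over $X$. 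This is exactly the bookkeeping you flag at the end, and the paper's fix---pass to the fiber at $x$ where $u$ trivializes everything---is the cleanest way to close it; once that is done, your third-step computation and the paper's coincide verbatim.
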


\begin{proof}
The argument is just a tedious \emph{d\'evissage} until reaching the case $\Lambda = \bbZ$, $\chi = 1$, $X = S$, $L_1 = \cO_S$, $u = 1$,  for which the result is substantially trivial.

\medskip

\textit{First step.}~ To begin with, reduce to the case $\Lambda = \bbZ$ and $\chi = 1$. Consider the $X$-analytic space $P_\chi$ parametrizing trivializations of the line bundle $L_\chi$ and the morphism $\pr_\chi \colon P \to P_\chi$ sending a point $s$ of $P$ with values in an $X$-analytic space $X'$ to the trivialization $\langle \chi , s\rangle$. Then, by design, the trivialization $\langle \chi, j\rangle$ can be seen as the composite morphism $\pr_\chi \circ j \colon P_1 \to P_\chi$. The latter factors through the first-order thickening $P_{\chi, 1}$ of $P_\chi$ along the section $\langle \chi, u\rangle = \pr_\chi(u)$, giving rise to the commutative diagram
\[
\begin{tikzcd}
P_1 \ar[r, "j"] \ar[d, "\pr_{\chi, 1}"'] & P \ar[d, "\pr_\chi"] \\
P_{\chi, 1} \ar[r, "j_{\chi}"]& P_\chi\rlap{,}
\end{tikzcd}
\]
where $j_\chi$ is the closed immersion and $\pr_{\chi, 1}$ the factorization of $\pr_\chi$. Upon letting $p_\chi \colon P_\chi \to X$ be the projection, the homomorphism $q_\chi \colon j_\chi^\ast p_\chi^\ast L_\chi \to \omega_{\Gm} \otimes x^\ast L_\chi$, defined analogously to $q$, fits in the  commutative diagram of $\cO_S$-modules
\[ 
\begin{tikzcd}
j_\chi^\ast p_\chi^\ast L_\chi \ar[d] \ar[r, "q_\chi"] & \pr_\chi(u)^\ast \omega_{\Gm} \otimes x^\ast L_\chi \ar[d, "\rd \chi \otimes \id"] \\
j^\ast p^\ast L_\chi  \ar[r, "q"] & \omega_T \otimes x^\ast L_\chi,
\end{tikzcd}
\]
where the leftmost vertical arrow is given by adjunction with respect to the map $\pr_{\chi, 1}$ and the rightmost one by pull-back of differential forms along the character $\chi$. In particular, it suffices to show that the trivialization of the line bundle $L_\chi$ given by the $P_{\chi, 1}$-valued point $j_\chi$ of $P_\chi$ is mapped to $\tfrac{\rd z}{z} \otimes u$ by $q_\chi$.

\medskip

\textit{Second step.} Suppose $\Lambda = \bbZ$ and $\chi = 1$. To simplify notation,  simply write $L$ instead of $L_\chi$. The aim of this second step is to reduce to the case $X = S$, $L = \cO_S$ and $u = 1$. In order to do this, consider the fiber $P_x$ of $P$ at $x$, which can also be seen  as the principal $\Gm$-bundle over $S$ associated with the line bundle $x^\ast L$ on $S$. In this case, the identity map of $S$ plays the role of the section $x$, and the line bundle $j^\ast p^\ast L$ on $P_1$ is replaced by the line bundle $\pi^\ast x^\ast L$ on $P_{x,1}$, where $P_{x, 1}$ is the first-order thickening of $P_x$ at $u$ and $\pi \colon P_{x, 1} \to S$ is the structural morphism. Diagram \eqref{Eq:ProofProjectionOntoTrivialPartCanonicalExtension1} for the principal bundle $P_x$ reads more simply as the following: 
\[
\begin{tikzcd}[column sep=20pt, row sep=15pt]
& 0 \ar[d, equal]& 0 \ar[d] & 0 \ar[d] & \\
0 \ar[r, equal]& 0 \ar[d] \ar[r] & x^\ast L \ar[d] \ar[r, equal] & x^\ast L \ar[d, equal] \ar[r] & 0 \\
0 \ar[r] & x^\ast L \otimes u^\ast \Omega^{1}_{P_x/S} \ar[d, equal] \ar[r] & \pi^\ast x^\ast L \ar[d] \ar[r] & x^\ast L \ar[d] \ar[r] & 0 \\
0 \ar[r] & x^\ast L \otimes u^\ast \Omega^{1}_{P_x/S}\ar[d] \ar[r, "\sim"] & \pi^\ast x^\ast L / x^\ast L \ar[r] \ar[d] & 0 \\
& 0 & 0\rlap{,} 
\end{tikzcd}
\]
where $x^\ast L \to \pi_\ast x^\ast L$ is natural map. The restriction map from $P_1$ to $P_{x, 1}$ furnishes a homomorphism of commutative diagrams of $\cO_S$-modules from \eqref{Eq:ProofProjectionOntoTrivialPartCanonicalExtension1} to the above, matching the entries in the obvious manner. In particular, the projection $q_x \colon \pi^\ast x^\ast L \to x^\ast L \otimes \omega_{\Gm}$ defined similarly to $q$ sits in the following diagram:
\[ 
\begin{tikzcd}
j^\ast p^\ast L \ar[r, "q"] \ar[d] & x^\ast L \otimes \omega_{\Gm}\ \ar[d, equal] \\
\pi^\ast x^\ast L \ar[r, "q_x"] & x^\ast L \otimes \omega_{\Gm}.
\end{tikzcd}
\]

Strictly speaking, the above argument permits one  to reduce to the case $X = S$ and to a line bundle $L$ on~$S$. However, by means of the isomorphism $\cO_S \cong L$ induced by the given trivialization $u$, one is finally led back to the case of the trivial line bundle on $S$ and $u = 1$.

\medskip

\textit{Third step.} Suppose $X= S$, $L = \cO_S$ and $u = 1$. Let $z$ be the coordinate function on $P = \Gm$. The $\cO_S$-algebra $\cO_{P_1}$ is then identified with $\cO_{S}[z] / (z-1)^2$ and its ideal generated by $z-1$ with the $\cO_S$-module $\omega_{\Gm}$, so that
\[ \cO_{S}[z] / (z-1)^2 = \cO_S \oplus \omega_{\Gm}.\]
In these terms, the map $q$ is just the projection onto $\omega_{\Gm}$. Moreover, and tautologically enough, the closed immersion $j \colon P_1 \to P$ corresponds to the invertible function $z$ on $P_1$. Writing $z = 1 + (z - 1)$ shows that the image of $z$ in $\omega_{\Gm}$ coincides with that of $z - 1$, which in turn corresponds to the invariant differential $\tfrac{\rd z}{z}$.
\end{proof}

\subsection{Relation with the universal vector hull of the fundamental group} In this section the map $i^\natural$ is related to the universal vector hull of the group $\Lambda$. To define the latter, recall that the group $\Lambda$ is by definition the lattice of characters of the torus $\check{T}$. Given $\chi \in \Lambda$, let $\chi \colon \check{T} \to \bbG_{m, S}^\an$ again denote the corresponding character. 

\begin{definition} The \emph{universal vector hull} of $\Lambda_S$ is the $S$-analytic group morphism
\[ \theta_\Lambda \colon \check{T} \too \bbV\left(\omega_{\check{T}}\right), \quad \chi \longmapsto  \chi^\ast \tfrac{\rd z}{z}, \]
where $z$ is the coordinate function on $\Gm$ and $\tfrac{\rd z}{z}$ the corresponding invariant differential form.
\end{definition}

\begin{lemma} \label{Lemma:BasisOfCharactersGivesBasisOfInvariantDifferential} For a basis $\chi_1, \dots, \chi_n$ of $\Lambda$, where $n = \rk \Lambda$, the sections $\theta_\Lambda(\chi_i)$ form a basis of the vector bundle $\omega_{\check{T}}$ on $S$.
\end{lemma}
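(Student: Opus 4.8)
The plan is to reduce the statement to a coordinate computation on the split torus $\bbG_{m, S}^n$. First I would use the basis $\chi_1, \dots, \chi_n$ of $\Lambda$ to produce an isomorphism of $S$-analytic groups
\[ \kappa := (\chi_1, \dots, \chi_n) \colon \check{T} \stackrel{\lowsim}{\too} \bbG_{m, S}^n. \]
This is essentially the definition of ``the split $S$-torus with group of characters $\Lambda$'': unwinding it, the chosen basis identifies $\Lambda$ with $\bbZ^n$ and correspondingly $\check{T}$ with $\bbG_{m, S}^n$ in such a way that the character $\chi_i \colon \check{T} \to \bbG_{m, S}$ becomes the projection $\pr_i \colon \bbG_{m, S}^n \to \bbG_{m, S}$ onto the $i$-th factor.

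Next I would identify $\omega_{\bbG_{m, S}^n}$ explicitly. Since $\bbG_{m, S}^n$ is the $n$-fold fiber product of $\bbG_{m, S}$ over $S$, one has a canonical decomposition $\Omega^1_{\bbG_{m, S}^n/S} \iso \bigoplus_{i=1}^n \pr_i^\ast \Omega^1_{\bbG_{m, S}/S}$, and $\Omega^1_{\bbG_{m, S}/S}$ is the free $\cO_{\bbG_{m, S}}$-module generated by $\tfrac{\rd z}{z}$ (the coordinate $z$ being invertible). Pulling back along the neutral section and using that $\pr_i$ carries the neutral section of $\bbG_{m, S}^n$ to that of $\bbG_{m, S}$, one concludes that $\omega_{\bbG_{m, S}^n}$ is the free $\cO_S$-module with basis $\pr_1^\ast \tfrac{\rd z}{z}, \dots, \pr_n^\ast \tfrac{\rd z}{z}$.

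Finally I would transport this basis back along $\kappa$. Because $\chi_i = \pr_i \circ \kappa$, the definition of the universal vector hull gives
\[ \theta_\Lambda(\chi_i) = \chi_i^\ast \tfrac{\rd z}{z} = \kappa^\ast\bigl(\pr_i^\ast \tfrac{\rd z}{z}\bigr). \]
Since $\kappa$ is an isomorphism of $S$-analytic groups, $\kappa^\ast \colon \omega_{\bbG_{m, S}^n} \to \omega_{\check{T}}$ is an isomorphism of $\cO_S$-modules, hence it sends the basis $(\pr_i^\ast \tfrac{\rd z}{z})_{i}$ to a basis; that is, $\theta_\Lambda(\chi_1), \dots, \theta_\Lambda(\chi_n)$ form a basis of $\omega_{\check{T}}$.

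There is no real obstacle in this argument; the only thing demanding a little care is bookkeeping with conventions — making sure that the definition of ``split $S$-torus with group of characters $\Lambda$'' in the Berkovich-analytic setting yields the isomorphism $\kappa$ together with the correspondence $\chi_i \leftrightarrow \pr_i$ on the nose, and that formation of $\Omega^1$ and of the cotangent space at the identity commutes with the finite products and pull-backs along sections used above (which it does, these being stable under base change).
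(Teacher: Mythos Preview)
Your argument is correct. The paper states this lemma without proof, presumably regarding it as immediate from the definition of a split torus; your reduction to $\bbG_{m,S}^n$ via the chosen basis and the explicit identification of $\omega_{\bbG_{m,S}^n}$ is exactly the standard justification one would supply.
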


The map $\theta_\Lambda$ deserves the name `universal vector hull' because of the following property: given a vector bundle $E$ over $S$ and a morphism of $S$-analytic groups $f \colon \Lambda_S \to \bbV(E)$, there exists a unique homomorphism of $\cO_S$-modules $\phi \colon \omega_{\check{T}} \to E$ such that $f = \phi \circ \theta_\Lambda$; see \cite[Example~1.3 a) and Proposition 1.4]{MazurMessing}.
Now, the canonical extension $(\cU_E)$ is by definition the push-out along the injective map $\rd \check{p} \colon \omega_{\check{B}} \to \omega_{\check{E}}$ of the extension $p^\ast (\cU_B)$. The quotient $\cU_E / p^\ast \cU_B$ therefore sits in the following commutative and exact diagram of $\cO_E$-modules:
\begin{equation} \label{Eq:ProjectionOntoTrivialPartCanonicalExtension}
\begin{tikzcd}[column sep=20pt, row sep=15pt]
& 0 \ar[d] & 0 \ar[d] & 0 \ar[d] & \\
0 \ar[r] & \epsilon^\ast \omega_{\check{B}} \ar[d] \ar[r] & p^\ast \cU_B\ar[d]  \ar[r] & \cO_E \ar[d, equal] \ar[r] & 0 \\
0 \ar[r] & \epsilon^\ast \omega_{\check{E}} \ar[d] \ar[r] & \cU_E \ar[d] \ar[r] & \cO_E \ar[d] \ar[r] & 0 \\
0 \ar[r] & \epsilon^\ast \omega_{\check{T}} \ar[d] \ar[r, "\sim"] & \cU_E/p^\ast \cU_{B} \ar[d] \ar[r] & 0 \\
& 0 & 0\rlap{.}
\end{tikzcd}
\end{equation}
The isomorphism $\epsilon^\ast \omega_{\check{T}} \cong \cU_E / p^\ast \cU_B$ allows one to define a projection $q \colon \cU_E \to \epsilon^\ast \omega_{\check{T}}$. In particular, for $\chi \in \Lambda$, this defines a homomorphism of $\cO_S$-modules $q \colon \chi^\ast \cU_E \to \omega_{\check{T}}$. Recall that $\chi^\natural = \langle \chi, \check{\jmath}\, \rangle$ can be seen as a section of $\chi^\ast \cU_E$ (see \Cref{Rmk:InterpretationInfinitesimalSection}).

\begin{proposition} \label{Prop:QuotientLinearizationUniversalExtension} For $\chi \in \Lambda$ we have $q(\langle \chi, \check{\jmath}\, \rangle) = \theta_{\Lambda}(\chi)$.
\end{proposition}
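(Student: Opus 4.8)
My plan is to reduce this statement to \Cref{Prop:InfinitesimalSectionToricBundle} by identifying the relevant toric bundle. The key observation is that the composite $q \colon \cU_E \to \epsilon^\ast\omega_{\check T}$ constructed from diagram \eqref{Eq:ProjectionOntoTrivialPartCanonicalExtension} is, after restriction to the fiber over the neutral section and pull-back along $\chi \colon S \to E$, exactly the homomorphism called $q$ in \Cref{Prop:InfinitesimalSectionToricBundle}, applied to the toric bundle underlying $\check E$. Indeed, recall from \Cref{sec:AlternativeDescriptionCanonicalExtensionUniversalCover} that $\cU_E = \pi_{E\ast}\iota_E^\ast\cL_E$, and from \Cref{Rmk:InterpretationInfinitesimalSection} that $\chi^\ast\cU_E = \check\epsilon_{1\ast}\cL_{E,(\chi,\check\jmath\,)}$, i.e.\ the push-forward to $S$ of the line bundle $(\chi,\id_{\check E_1})^\ast\iota_E^\ast\cL_E$ on $\check E_1$. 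On the other hand, $\check E$ is precisely the toric bundle over $\check B$ parametrizing, for $\chi \in \Lambda$, trivializations of the line bundle $\cL_{B,(c(\chi),-)}$ on $\check B$ (this is the very construction of the dual datum in \Cref{sec:TateRaynaudUniformization}), with the split torus $\check T$ (characters $\Lambda$) acting. So I take $X := \check B$, the group $\Lambda$ with $\chi \mapsto L_\chi := \cL_{B,(c(\chi),-)}$, the toric bundle $P := \check E$, the torus $T := \check T$, and the section $u := \check e$ the neutral section of $\check E$, hence $x = \check p(\check e) = \check e_{\check B}$ the neutral section of $\check B$.

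With this dictionary, \Cref{Prop:InfinitesimalSectionToricBundle} gives $q_{\text{toric}}(\langle\chi, \check\jmath\,\rangle) = \chi^\ast\tfrac{\rd z}{z}\otimes \check e^\ast L_\chi$, and since $L_\chi = \cL_{B,(c(\chi),-)}$ restricted to the neutral section of $\check B$ is canonically trivial, this reads $q_{\text{toric}}(\langle\chi,\check\jmath\,\rangle) = \chi^\ast\tfrac{\rd z}{z} = \theta_\Lambda(\chi)$, where $\chi \colon \check T \to \Gm$ is the character. The remaining work is to check that $q_{\text{toric}}$ agrees with the map $q \colon \chi^\ast\cU_E \to \omega_{\check T}$ coming from \eqref{Eq:ProjectionOntoTrivialPartCanonicalExtension}. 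Both are built from the same bottom-row isomorphism identifying the cokernel of the canonical extension; on the toric side it is $u^\ast\Omega^1_{P/X} \cong \omega_T$ coming from the action isomorphism $T\times_S P \to P\times_X P$, while on the abeloid side the cokernel $\epsilon^\ast\omega_{\check E}/\epsilon^\ast\omega_{\check B} \cong \epsilon^\ast\omega_{\check T}$ comes from the toric extension $0 \to \check T \to \check E \to \check B \to 0$ — and that isomorphism $\omega_{\check E}/\omega_{\check B} \cong \omega_{\check T}$ is precisely the restriction-to-fiber map $u^\ast\Omega^1_{\check E/\check B} \cong \omega_{\check T}$ for the $\check T$-action. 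So I would write out the comparison of diagram \eqref{Eq:ProjectionOntoTrivialPartCanonicalExtension} pulled back along $\chi$ with diagram \eqref{Eq:ProofProjectionOntoTrivialPartCanonicalExtension1} for $P = \check E$, matching terms: $\epsilon^\ast\omega_{\check B}\rvert_\chi \leftrightarrow x^\ast\Omega^1_{X/S}\otimes x^\ast L_\chi$ (using $\Omega^1_{\check B/S}$ at the neutral section), $\cU_E\rvert_\chi = \chi^\ast\cU_E \leftrightarrow j^\ast p^\ast L_\chi$ via \Cref{Rmk:InterpretationInfinitesimalSection}, and $\epsilon^\ast\omega_{\check T}\rvert_\chi \leftrightarrow u^\ast\Omega^1_{P/X}\otimes x^\ast L_\chi \cong \omega_T$.

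\textbf{The main obstacle.} The conceptual content is easy, but the delicate point is the bookkeeping of which line bundle and which trivialization is being pushed forward where: \Cref{Rmk:AlternativeDescriptionCanonicalExtension} expresses $\cU_E$ as $\pi_{E\ast}\iota_E^\ast\cL_E$ and the left term $\epsilon^\ast\omega_{\check B} \hookrightarrow p^\ast\cU_B$ sits inside this only after the isomorphism of \Cref{Remark:AlternativeDescriptionPushForwardAtiyahExtension}, so I must verify that the three-step filtration in \eqref{Eq:ProjectionOntoTrivialPartCanonicalExtension}, after pull-back along $\chi$ and using $\chi^\ast\cU_E = \check\epsilon_{1\ast}\cL_{E,(\chi,\check\jmath\,)}$, matches \eqref{Eq:ProofProjectionOntoTrivialPartCanonicalExtension1} on the nose — in particular that the section $\langle\chi,\check\jmath\,\rangle$ of $\chi^\ast\cU_E$ of \Cref{Rmk:InterpretationInfinitesimalSection} corresponds to the $P_1$-valued point $j$ of $P = \check E$ appearing in \Cref{Prop:InfinitesimalSectionToricBundle}. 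This is exactly the assertion that the first-order thickening $\check E_1$ of $\check E$ along $\check e$, with its closed immersion $\check\jmath$, \emph{is} the thickening $P_1 \to P$ of the toric bundle, and the tautological trivialization $\langle\chi,\check\jmath\,\rangle$ of $\cL_{E,(\chi,\check\jmath\,)} = (\chi,\id_{\check E_1})^\ast\iota_E^\ast\cL_E$ is the pullback to $\check E_1$ of the universal trivialization $\langle\chi,-\rangle$ on $\check E$ — which holds by the very definition of $\check E$ as a moduli space of trivializations. Once that identification is spelled out, the proposition follows by invoking \Cref{Prop:InfinitesimalSectionToricBundle} and the canonical triviality of $\cL_B$ along the neutral section of $\check B$.
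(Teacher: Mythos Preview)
Your proposal is correct and follows essentially the same approach as the paper's own proof: both reduce to \Cref{Prop:InfinitesimalSectionToricBundle} via the dictionary $X = \check{B}$, $P = \check{E}$, $L_\chi = \cL_{B,(c(\chi),-)}$, $u = \check{e}$, and both identify the main work as matching diagram \eqref{Eq:ProjectionOntoTrivialPartCanonicalExtension} (pulled back along $\chi$) with diagram \eqref{Eq:ProofProjectionOntoTrivialPartCanonicalExtension1}, using \Cref{Rmk:InterpretationInfinitesimalSection} for $\chi^\ast \cU_E \leftrightarrow j^\ast p^\ast L_\chi$ and the canonical trivialization of $\check{e}^\ast \cL_{B,(c(\chi),-)}$ via $\langle \chi, \check{e}\rangle$ to absorb the $x^\ast L_\chi$ factors. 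Your bookkeeping of the three-term filtration and the identification $\omega_{\check{E}}/\omega_{\check{B}} \cong \omega_{\check{T}} \cong u^\ast\Omega^1_{P/X}$ is exactly what the paper sketches under its three numbered remarks.
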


\begin{proof} Up to transliteration, the statement is a special case of \Cref{Prop:InfinitesimalSectionToricBundle}. Needless to say, 
\begin{itemize}
\item the $S$-analytic space therein $X$ plays the role of $\check{B}$, 
\item  the principal bundle $P$ that of $\check{E}$, 
\item the line bundle $L_\chi$ that of $\cL_{B, (c(\chi), \id)}$ and 
\item the section $u$ that of $\langle \chi, \check{e} \rangle$, where $\check{e}$ is the neutral section of $\check{E}$. 
\end{itemize}
For $\chi \in \Lambda$ the section $\langle \chi, j \rangle$ corresponds via this dictionary to the trivialization $\langle \chi, \check{\jmath}\, \rangle$. Nonetheless, the reader might still be lost in translation while trying to see why diagram \eqref{Eq:ProofProjectionOntoTrivialPartCanonicalExtension1} reads as \eqref{Eq:ProjectionOntoTrivialPartCanonicalExtension}. To remedy that, first, notice that in the current framework, the line bundle $\check{e}^\ast \cL_{B, (c(\chi), \id)}$ is always understood to be trivialized via $\langle \chi, \check{e}\rangle$. This should elucidate the omnipresence of the line bundle  $x^\ast L_\chi$ as opposed to the absence of the corresponding line bundle $\check{e}^\ast \cL_{B, (c(\chi), \id)}$. Second, the statement of \Cref{Prop:QuotientLinearizationUniversalExtension} revolves around the vector bundles $\cU_E$ and $p^\ast \cU_B$ on $E$ (rather, their fibers at $\chi$), whereas in \Cref{Prop:InfinitesimalSectionToricBundle} the line bundles $j^\ast p^\ast L_\chi$ and $i^\ast L_\chi$ are considered (better, their push-forward onto $S$). However, the line bundle $j^\ast p^\ast L_\chi$ translates to $\cL_{E, (\chi, \check{\jmath}\,)}$ and, as already observed in
\Cref{Rmk:InterpretationInfinitesimalSection}, the push-forward of $\cL_{E, (\chi, \check{\jmath}\,)}$ onto $S$ coincides with $\chi^\ast \cU_E$. Along a similar line, the vector bundle $i^\ast L_\chi$ on $S$ plays the role of $\chi^\ast p^\ast \cU_B$. Third, in diagram \eqref{Eq:ProjectionOntoTrivialPartCanonicalExtension}, there is nothing whatsoever like $u^\ast \Omega^1_{P/X}$. This is because the projection  $q$ in the statement of \Cref{Prop:InfinitesimalSectionToricBundle} is constructed by further taking into account the isomorphism $\omega_{T} \cong u^\ast \Omega^1_{P/X}$  already implied here.
\end{proof}

Diagram \eqref{Eq:ProjectionOntoTrivialPartCanonicalExtension} permits one to define a morphism of $S$-analytic groups
\[
\pr_u \colon E^\natural \too \bbV\left(\omega_{\check{T}}\right).
\]
Unwinding the definitions, \Cref{Prop:QuotientLinearizationUniversalExtension} is rephrased as follows. 

\begin{theorem} \label{Thm:LinearizationAndUniversalVectorHull} For $\chi \in \Lambda$ we have $\pr_u(\chi^\natural) = \theta_{\Lambda}(\chi)$.
\end{theorem}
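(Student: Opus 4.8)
The plan is to deduce \Cref{Thm:LinearizationAndUniversalVectorHull} from \Cref{Prop:QuotientLinearizationUniversalExtension} by merely unwinding the definition of the projection $\pr_u$. The point is that $\pr_u$ is nothing but the morphism of $S$-analytic groups induced by the quotient homomorphism $q \colon \cU_E \to \epsilon^\ast \omega_{\check{T}}$ of diagram \eqref{Eq:ProjectionOntoTrivialPartCanonicalExtension}, passed through the functor $\cF \mapsto \bbP(\cF) \smallsetminus \bbP(\cdot)$ which sends an extension of $\cO_E$ by a vector bundle to the associated affine bundle; concretely, a point of $E^\natural$ over $S'$ is a splitting $s \colon \cO_{S'} \to g^\ast \cU_E$ (where $g \colon S' \to E$ is the underlying point), and $\pr_u$ sends it to the section $q \circ s$ of $g^\ast \epsilon^\ast \omega_{\check{T}}$, which after pulling back along the structural morphism is a section of $\omega_{\check{T}}$, i.e.\ an $S'$-point of $\bbV(\omega_{\check{T}})$.

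\medskip

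First I would recall that, by construction in \Cref{sec:UniversalCoverUniversalVectorExtension}, the $S$-point $\chi^\natural = i^\natural(\chi)$ of $E^\natural$ is exactly the splitting of $\chi^\ast(\cU_E)$ determined by the trivialization $\langle \chi, \check{\jmath}\,\rangle$, which via \Cref{Rmk:InterpretationInfinitesimalSection} is viewed as a section of $\chi^\ast \cU_E$ whose image in $\cO_S$ is $1$. Then, applying $\pr_u$ to this point amounts, by the description above, to applying the fiberwise homomorphism $q \colon \chi^\ast \cU_E \to \omega_{\check{T}}$ to the section $\langle \chi, \check{\jmath}\,\rangle$. That is precisely the quantity computed in \Cref{Prop:QuotientLinearizationUniversalExtension}, which gives $q(\langle \chi, \check{\jmath}\,\rangle) = \theta_\Lambda(\chi)$. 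Hence $\pr_u(\chi^\natural) = \theta_\Lambda(\chi)$, as claimed.

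\medskip

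The only genuine content is checking that the abstract projection $\pr_u$ really does coincide, fiberwise at $\chi$, with the homomorphism $q$ of \Cref{Prop:QuotientLinearizationUniversalExtension}; everything else is formal. This is the step I expect to be the main (and essentially the sole) obstacle, since it requires matching the group-theoretic definition of $\pr_u$ as a morphism $E^\natural \to \bbV(\omega_{\check{T}})$ with the module-theoretic data in diagram \eqref{Eq:ProjectionOntoTrivialPartCanonicalExtension}. The compatibility follows from the fact that the functor from extensions of $\cO_E$ to affine $E$-bundles is compatible with push-out, so that the push-out of $(\cU_E)$ along $q \colon \epsilon^\ast\omega_{\check{E}} \to \epsilon^\ast\omega_{\check{T}}$ — which is the split extension $\cO_E \oplus \epsilon^\ast\omega_{\check{T}}$, whose associated bundle is $E \times_S \bbV(\omega_{\check{T}})$ — corresponds to the map $E^\natural \to E \times_S \bbV(\omega_{\check{T}})$; projecting to the second factor yields $\pr_u$, and on sections this is exactly post-composition with $q$. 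Once this identification is in place, the theorem is immediate. I would therefore keep the proof short: state that $\pr_u$ is induced by $q$, observe that $\chi^\natural$ is the splitting given by $\langle \chi, \check{\jmath}\,\rangle$, and invoke \Cref{Prop:QuotientLinearizationUniversalExtension}.
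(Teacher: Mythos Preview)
Your proposal is correct and matches the paper's own proof exactly: the paper simply states that \Cref{Thm:LinearizationAndUniversalVectorHull} is obtained by ``unwinding the definitions'' from \Cref{Prop:QuotientLinearizationUniversalExtension}, and you have written out precisely that unwinding. The identification of $\pr_u$ with the map induced by $q$ and of $\chi^\natural$ with the splitting $\langle \chi, \check{\jmath}\,\rangle$ is exactly what is meant.
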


\begin{example} Theorem~\ref{Thm:LinearizationAndUniversalVectorHull} is more eloquent when $A$ is an abeloid variety over $K$ with totally degenerate reduction, that is, $A = T / \Lambda$. If $\check{T}$ is the torus with group of characters $\Lambda$ and $\theta_\Lambda \colon \Lambda \to \bbV(\omega_{\check{T}})$ the universal vector hull of $\Lambda$, then
\[ A^\natural = (T \times \bbV(\omega_{\check{T}})) /
\{ (\chi, \theta_\Lambda(\chi)) : \chi \in \Lambda\}.\]
\end{example}

\subsection{Universal cover of affine bundles} \label{sec:UniversalCoverAffineBundles}

\subsubsection{Extensions} Let  $F$ be a vector bundle over $S$, $\phi_A \colon \omega_{\check{A}} \to F$ a homomorphism of $\cO_S$-modules and $(\cF_A)$ the short exact sequence of $\cO_S$-modules obtained by push-out of the canonical extension $(\cU_A)$ along $\phi_A$.  Via the isomorphism $\cU_E \cong u^\ast \cU_A$ obtained in \Cref{sec:AlternativeDescriptionCanonicalExtensionUniversalCover}, the short exact sequence of $\cO_E$-modules $(\cF_E) := u^\ast (\cF_A)$ is seen to be the push-out of $(\cU_E)$ along the homomorphism of $\cO_S$-modules
\[ 
\begin{tikzcd}
\phi_E \colon \omega_{\check{E}} \ar[r, "{(\rd \check{u})^{-1}}"]&\omega_{\check{A}} \ar[r, "\phi_A"] & F,
\end{tikzcd}
\]
where $\rd \check{u} \colon \omega_{\check{A}} \to \omega_{\check{E}}$ is the isomorphism given by pull-back of differential forms along the \'etale morphism~$\check{u}$. On the other hand, by its very definition, the short exact sequence $(\cU_E)$ is itself the push-out of $p^\ast (\cU_B)$ along the $K$-linear homomorphism $\rd \check{p} \colon \omega_{\check{B}} \to \omega_{\check{E}}$. Thus, the preceding considerations furnish an isomorphism $(\cF_E) \cong p^\ast (\cF_B)$ of short exact sequences of $\cO_{E}$-modules, where $(\cF_B)$ is the push-out of the canonical extension $(\cU_B)$ along the homomorphism of $\cO_S$-modules
\[ 
\begin{tikzcd}
\phi_B \colon \omega_{\check{B}} \ar[r, "\rd \check{p}"]& \omega_{\check{E}}  \ar[r, "\phi_E"] & F.
\end{tikzcd}
\]

\subsubsection{Affine bundles} Consider the affine bundle $\pi_A \colon \bbV(\cF_A) \to A$ and the morphism of $S$-analytic spaces
\[ \Phi_A \colon A^\natural \too \bbA(\cF_A)\]
induced by the homomorphism $(\cU_A) \to (\cF_A)$ of short exact sequences of $\cO_A$-modules given by the definition of $(\cF_A)$ as a push-out. The affine bundle $\bbA(\cF_A)$ carries a unique $S$-analytic group structure for which the morphism $\Phi_A$ is a group morphism. The pull-back $\bbV(\cF_A) \times_A E$ to $E$ is by definition the affine bundle $\pi_E \colon \bbA(\cF_E) \to E$ associated with the extension $(\cF_E)$. By transport of structure, the $E$-analytic space $\bbA(\cF_E)$ is an $S$-analytic group such that the natural morphism 
\[ \Phi_E \colon E^\natural \too \bbA(\cF_E)\]
deduced from $\Phi_A$ is a group morphism. The natural action of $\Lambda_S$ on $\bbV(\cF_E)$ given by the natural $\Lambda_S$-linearization of $\cF_E$ is described, for $\chi \in \Lambda$, as the translation by the point $\Phi_E(\chi^\natural)$, 
where $\chi^\natural$ is the $S$-point of the universal vector extension $E^\natural$ considered in \Cref{sec:UniversalCoverUniversalVectorExtension}.

Now, the isomorphism of short exact sequences of $\cO_E$-modules  $(\cF_E) \cong p^\ast (\cF_B)$ permits one to identify the affine bundle $\bbA(\cF_E)$ with the fibered product $\bbA(\cF_B) \times_B E$, where $\pi_B \colon \bbA(\cF_B) \to B$ is the affine bundle associated with $(\cF_B)$. This identification respects the natural $S$-analytic group structures involved and will be implied in what follows. Let $q \colon \bbA(\cF_E) \to \bbA(\cF_B)$ be the projection, so that the following square of $K$-analytic space is Cartesian:
\[
\begin{tikzcd}
\bbA(\cF_E) \ar[d, "\pi_E"] \ar[r, "q"] & \bbA(\cF_B) \ar[d, "\pi_B"] \\
E \ar[r, "p"] & B\rlap{.}
\end{tikzcd}
\]

Let $C_B$ be the cokernel of $\phi_B$. Arguing as for the map $\pr_u \colon E^\natural \to \bbV(\omega_{\check{T}})$
 permits one to define a morphism of $S$-analytic groups $\pr_{u, B} \colon \bbA(\cF_B) \to \bbV(C_B)$. Let $\phi_T \colon \omega_{\check{T}} \to C_B$ the unique map fitting in the following commutative and exact diagram of homomorphism of $\cO_S$-modules:
\begin{equation} \label{Eq:DecompositionUniversalCoverLinearMapMakingPushOut}
\begin{tikzcd}
0 \ar[r] & \omega_{\check{B}} \ar[r, "\rd \check{p}"] \ar[d, "\phi_B"]& \omega_{\check{E}} \ar[r] \ar[d, "\phi_E"] & \omega_{\check{T}} \ar[d, "\phi_T"] \ar[r]& 0\ \\
0 \ar[r] & \im \phi_B \ar[r] & F \ar[r] & C_B \ar[r] & 0.
\end{tikzcd}
\end{equation}
The construction of $(\cF_E)$ as a push-out of $(\cU_E)$ along $\phi_E$ implies that the diagram of $S$-analytic spaces
\[ 
\begin{tikzcd}
E^\natural \ar[r] \ar[d, "\Phi_E"] & \bbV\left(\omega_{\check{T}}\right) \ar[d, "\pr_{u, E}"] \\
\bbA\left(\cF_E\right) \ar[r, "\phi_T"] & \bbV\left(C_B\right)
\end{tikzcd}
\]
is commutative, where the upper horizontal arrow is the projection considered in \Cref{Thm:LinearizationAndUniversalVectorHull} and $\pr_{u, E}:= \pr_{u, B} \circ q  \colon \bbA(\cF_E) \to \bbV(C_B)$. These considerations together with \Cref{Thm:LinearizationAndUniversalVectorHull} prove the following. 

\begin{corollary} With the notation above, for $\chi \in \Lambda$ we have
\[
 \pr_{u, E}(\Phi_E(\chi^\natural)) = \phi_T(\theta_\Lambda(\chi)).
\]
\end{corollary}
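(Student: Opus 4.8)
The plan is to read the identity off the commutative square displayed just before the statement, combined with \Cref{Thm:LinearizationAndUniversalVectorHull}; no new computation is required. Concretely, that square asserts the equality of morphisms $E^\natural \to \bbV(C_B)$
\[
 \pr_{u, E} \circ \Phi_E = \phi_T \circ \pr_u,
\]
where $\pr_u \colon E^\natural \to \bbV(\omega_{\check{T}})$ is the projection appearing in \Cref{Thm:LinearizationAndUniversalVectorHull}, $\phi_T \colon \omega_{\check{T}} \to C_B$ is the homomorphism of \eqref{Eq:DecompositionUniversalCoverLinearMapMakingPushOut} (and the induced map $\bbV(\omega_{\check{T}}) \to \bbV(C_B)$ is denoted the same way), and $\pr_{u,E} = \pr_{u,B} \circ q$. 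Granting this, the statement follows by evaluating both sides at the $S$-point $\chi^\natural$ of $E^\natural$: the left-hand side becomes $\pr_{u,E}(\Phi_E(\chi^\natural))$, the right-hand side becomes $\phi_T(\pr_u(\chi^\natural))$, and \Cref{Thm:LinearizationAndUniversalVectorHull} identifies $\pr_u(\chi^\natural)$ with $\theta_\Lambda(\chi)$, whence $\pr_{u,E}(\Phi_E(\chi^\natural)) = \phi_T(\theta_\Lambda(\chi))$.

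\textbf{The substantive point.} The only thing that needs care is the commutativity of the square, which is the naturality of the ``projection onto the toric part'' under push-out. I would establish it as follows. The morphism $\Phi_E \colon E^\natural \to \bbA(\cF_E)$ is induced by the homomorphism $(\cU_E) \to (\cF_E)$ coming from the description of $(\cF_E)$ as the push-out of $(\cU_E)$ along $\phi_E$; the morphism $\pr_u$ is induced by the surjection $\cU_E \to \cU_E / p^\ast \cU_B \cong \epsilon^\ast \omega_{\check{T}}$ of \eqref{Eq:ProjectionOntoTrivialPartCanonicalExtension}; and $\pr_{u,E}$ is induced by the analogous surjection $\cF_E \to \cF_E / p^\ast \cU_B \cong \epsilon^\ast C_B$ obtained after push-out (one checks $\coker(p^\ast\cU_B \to \cF_E) \cong \epsilon^\ast C_B$ since the composite $\epsilon^\ast\omega_{\check B} \to \epsilon^\ast F$ along $p^\ast\cU_B \to \cU_E \to \cF_E$ is $\epsilon^\ast\phi_B$). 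One therefore has to check that the square of $\cO_E$-modules with rows these two surjections and vertical maps $(\cU_E) \to (\cF_E)$ and $\epsilon^\ast\phi_T$ commutes; this is a diagram chase, for by construction $\phi_T$ is precisely the map induced on the horizontal cokernels of \eqref{Eq:DecompositionUniversalCoverLinearMapMakingPushOut}. Passing to the associated affine bundles and composing with $q$ (which is pull-back along $p$) yields the square of $S$-analytic groups.

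\textbf{Expected obstacle.} I do not expect any genuine obstacle: \Cref{Thm:LinearizationAndUniversalVectorHull} does all the serious work, and the remaining step is a compatibility of push-outs with cokernels that is forced by \eqref{Eq:DecompositionUniversalCoverLinearMapMakingPushOut} and \eqref{Eq:ProjectionOntoTrivialPartCanonicalExtension}. The one thing to be mildly careful about is the bookkeeping of the various canonical isomorphisms ($\cU_E \cong u^\ast\cU_A$, $\cF_E \cong p^\ast\cF_B$, and the identifications of the cokernels), so that both composites in the square really land in $\bbV(C_B)$ in a matching way.
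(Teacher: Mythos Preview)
Your proposal is correct and follows exactly the paper's own argument: the paper states the commutative square (deduced from the construction of $(\cF_E)$ as the push-out of $(\cU_E)$ along $\phi_E$) and then simply writes ``These considerations together with \Cref{Thm:LinearizationAndUniversalVectorHull} prove the following.'' Your expansion of why the square commutes (naturality of the toric-quotient projection under push-out, governed by \eqref{Eq:DecompositionUniversalCoverLinearMapMakingPushOut}) is a faithful unpacking of what the paper leaves implicit.
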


\subsubsection{Contractibility of the universal cover} In this final section the contractibility of the space $\bbA(\cF_E)$ above is addressed when $S$ is a $K$-rational point.

\begin{lemma} \label{Lemma:ContractibleOpenInSmooth} Let $\cX$ be a smooth connected admissible formal $R$-scheme with Raynaud's generic fiber $X := \cX_\eta$. Then, for any closed analytic subspace of $Z \subsetneq X$, the open subset $X \smallsetminus Z$ is contractible.
\end{lemma}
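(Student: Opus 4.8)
The plan is to produce a strong deformation retraction of $X\smallsetminus Z$ onto a single point. One first reduces to the case where $\cX$ is quasi-compact, and in fact affine, $\cX=\Spf A$ with $A$ admissible and smooth over $R$; then $X=\cX_\eta$ is a $K$-affinoid space. As $\cX$ is smooth and connected, its special fibre $\cX_s$ is integral, so the reduction map $\sp\colon X\to\cX_s$ has a unique point $x_0$ lying over the generic point of $\cX_s$: this is the Shilov boundary point of $X$, the point at which every $f\in\cO_X(X)$ attains its spectral seminorm.

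I would then invoke Berkovich's theory of skeleta of poly-stable formal schemes: a smooth formal $R$-scheme is poly-stable with skeleton reduced to a point on each connected component, so its generic fibre comes equipped with a canonical strong deformation retraction $H\colon X\times[0,1]\to X$ with $H_0=\id_X$ and $H_1\equiv x_0$. In particular $X$ itself is contractible. The decisive property of $H$ that makes the argument work is a \emph{domination} estimate: for every point $x\in X$, every $t\in[0,1]$ and every $f\in\cO_X(X)$ one has $|f(x)|\le|f(H_t(x))|$. Intuitively $H_t$ replaces $x$ by (the Shilov point of) a ``tube of radius $t$'' around it, which contains $x$, so $|f(H_t(x))|$ is the supremum of $|f|$ over an affinoid domain through $x$. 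Concretely I would verify this estimate by pulling back along an \'etale chart $\cX\to\widehat{\bbA}^d_R$, where $X$ is a closed polydisc, $x_0$ its Gauss point, and $H$ the explicit ``fattening'' homotopy, for which the inequality is a direct computation with power series.

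Granting the estimate, the conclusion is short. Since $Z\subsetneq X$, the coherent ideal defining $Z$ has a non-zero global section $f\in\cO_X(X)$; as $\cX$ is smooth the ring $\cO_X(X)$ is reduced, so the spectral seminorm of $f$ is positive, i.e.\ $|f(x_0)|>0$ and hence $x_0\notin Z$. Likewise, if $x\notin Z$ then $f(x)\neq0$ for some generator $f$ of the ideal of $Z$, so $f(H_t(x))\neq0$ for all $t$ by domination, and therefore $H_t(x)\notin Z$; thus $H$ restricts to a strong deformation retraction of $X\smallsetminus Z$ onto $\{x_0\}$, which gives the contractibility of $X\smallsetminus Z$. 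The step I expect to be the real obstacle is the input from Berkovich's theory: one must extract from the construction of the skeleton of a smooth formal scheme the precise fact that the canonical retraction onto the Shilov point is the tube/fattening homotopy, and in particular that it preserves the complement of \emph{every} closed analytic subspace---together with the bookkeeping needed to pass from the explicit affine case to the stated generality.
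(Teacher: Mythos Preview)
Your strategy---use Berkovich's canonical retraction onto the skeleton and show it preserves $X\smallsetminus Z$---is exactly the paper's. The gap is in the step you yourself flag: the reduction to the affine case. Contractibility does not localize, and in the paper's sole application (\Cref{Prop:UniversalCoverContractible}) the formal scheme $\cX$ is a product of projective bundles over a formal abelian scheme, which is proper and certainly not affine. Your domination estimate $|f(x)|\le|f(H_t(x))|$ is stated for global $f\in\cO_X(X)$; if instead you try to use a local section $f$ of the ideal of $Z$ near $x$, the point $H_t(x)$ moves toward the \emph{global} skeleton and will in general leave the domain of $f$, so you cannot evaluate $f$ there. This is not bookkeeping---it is where the argument as written breaks.

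The paper bypasses this entirely by citing a sharper, function-free property of the retraction. It invokes \cite[Theorem~5.2]{BerkovichContractible} directly for the given $\cX$ (no reduction), and uses item~(v) of that theorem: for every $x\in X$ and every $0<t\le 1$, the local ring $\cO_{X,h(x,t)}$ is a field. It follows that no proper closed analytic subspace of $X$ can contain $h(x,t)$, so $h(x,t)\in X\smallsetminus Z$ automatically and the retraction restricts. Your domination estimate is the affinoid shadow of this fact, but item~(v) is precisely what makes the argument go through globally without ever choosing a function.
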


\begin{proof} Let $h \colon X \times [0, 1] \to X$ be the deformation retraction  onto the skeleton $\Sk(\cX)$ of $\cX$ given by \cite[Theorem 5.2]{BerkovichContractible}. Since the formal scheme $\cX$ is smooth connected, the skeleton $\Sk(\cX)$ is a singleton, namely the unique preimage of the generic point of $\cX$ under the reduction map $X \to \cX$. According to item~(v) of \emph{loc.~cit.}, for $0 < t \le 1$ and $x \in X$, the local ring at the point $h(x, t)$ is a field. Thus the only closed analytic subspace of $X$ containing $h(x, t)$ is $X$ itself. In particular, the point $h(x, t)$ belongs to $X \smallsetminus Z$, and the statement follows.
\end{proof}

\begin{proposition} \label{Prop:UniversalCoverContractible} The topological space $\bbA(\cF_E)$ is contractible and is a universal cover of $\bbA(\cF_A)$.
\end{proposition}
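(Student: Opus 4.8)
The plan is to establish contractibility by climbing the tower $\bbA(\cF_E)\to E\to B$, feeding \Cref{Lemma:ContractibleOpenInSmooth} only through the good-reduction base $B$ and treating the two bundle projections with a soft homotopy-lifting argument; the covering assertion is then essentially formal. First I would record that $B$ is contractible: since $S$ is a $K$-rational point, $B$ is the Raynaud generic fiber of a formal abelian scheme $\cB$ over $R$, which is a smooth connected admissible formal $R$-scheme, so \Cref{Lemma:ContractibleOpenInSmooth} with $Z=\emptyset$ applies. Next I would isolate the elementary fact that, for a locally trivial fiber bundle $f\colon Y\to X$ of good Hausdorff paracompact $K$-analytic spaces whose fiber $F$ and base $X$ are both contractible, the total space $Y$ is contractible: such an $f$ is a Hurewicz fibration (locally trivial over a paracompact base), so lifting a contracting homotopy of $X$ through $f$ starting from $\id_Y$ produces a homotopy from $\id_Y$ to a map $Y\to F$, which is null-homotopic because $F$ is, whence $\id_Y$ is null-homotopic.

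I would then apply this fact twice. The projection $p\colon E\to B$ is a torsor under the split torus $T\cong\Gm^n$, hence locally trivial with fiber $\Gm^{n,\an}$; since $\Gm^{n,\an}$ is contractible (it deformation retracts onto its skeleton $\bbR^n$) and $B$ is contractible, $E$ is contractible — alternatively one may here simply invoke the classical contractibility of the uniformizing semi-abelian space recalled in the introduction. The projection $\pi_E\colon\bbA(\cF_E)\to E$ is, by construction, the affine bundle attached to $(\cF_E)$, equivalently the pull-back to $E$ of $\bbA(\cF_B)\to B$, so it is locally trivial with fiber $\bbA^{m,\an}$ where $m=\dim_K F$, which is contractible; therefore $\bbA(\cF_E)$ is contractible.

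For the covering statement I would use the description obtained just above the proposition: $\bbA(\cF_A)$ is the quotient of $\bbA(\cF_E)$ by the action of $\Lambda_S$ under which $\chi\in\Lambda$ acts by the translation $v\mapsto v+\Phi_E(\chi^\natural)$, and $\pi_E$ is equivariant for this action and for the translation action of $\Lambda$ on $E$. Because $\Lambda_S\subseteq E$ is closed with Hausdorff (proper) quotient $A$, the $\Lambda$-action on $E$ is free and properly discontinuous, and $\pi_E$-equivariance transports this to the action on $\bbA(\cF_E)$; hence $\bbA(\cF_E)\to\bbA(\cF_A)$ is a covering map, and since $\bbA(\cF_E)$ has just been shown to be contractible — in particular simply connected — it is the universal cover of $\bbA(\cF_A)$.

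The main obstacle is the middle step: one must check that the two projections really are topological fiber bundles over paracompact bases so that the homotopy-lifting argument applies, and record the contractibility of $\Gm^{n,\an}$ and of $\bbA^{m,\an}$. These are standard facts of Berkovich geometry, but the torus factor is precisely the reason one cannot produce a single smooth formal model of $\bbA(\cF_E)$ and apply \Cref{Lemma:ContractibleOpenInSmooth} to it directly, which is what forces the detour through the good-reduction base $B$.
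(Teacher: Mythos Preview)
Your argument is correct and takes a genuinely different route from the paper. You climb the tower $\bbA(\cF_E)\to E\to B$ and use a Hurewicz-fibration argument at each stage, feeding in the contractibility of $B$ (via \Cref{Lemma:ContractibleOpenInSmooth}), of $\Gm^{n,\an}$, and of $\bbA^{m,\an}$. The paper instead does precisely what you declare impossible in your last paragraph: it builds a single smooth connected admissible formal $R$-scheme
\[
\cX \;=\; \bbP(\cF_0)\times_\cB \bbP(\cO_\cB\oplus\cL_{\check\chi_1})\times_\cB\cdots\times_\cB\bbP(\cO_\cB\oplus\cL_{\check\chi_n})
\]
by compactifying each $\Gm$-factor of the torus to a $\bbP^1$ and the affine bundle to a projective bundle, observes that $\bbA(\cF_E)$ is the complement of a Cartier divisor in $X=\cX_\eta$, and applies \Cref{Lemma:ContractibleOpenInSmooth} once. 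So the torus factor is not an obstruction at all; it just has to be compactified before invoking the lemma.

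As for trade-offs: the paper's approach is shorter and stays entirely within the Berkovich deformation-retraction machinery already set up, at the cost of having to produce compatible formal models for the torus and the affine bundle. Your approach avoids building that model but imports a chunk of general topology---paracompactness of the Berkovich spaces involved, local triviality of $T$-torsors and affine bundles in the Berkovich (not just the $G$-) topology, and the Hurewicz property for bundles over paracompact bases. These checks are routine here (the spaces are $\sigma$-compact, locally compact, Hausdorff; local freeness of coherent sheaves gives local triviality), and you flag them correctly, but they are genuine verifications rather than free facts. Your covering argument is the same as the paper's.
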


\begin{proof} Let $F_0$ be the image of the $R$-module $\omega_{\check{\cB}}$ via the map $\phi_B \colon \omega_{\check{B}} \to F$ and $\cF_0$ the push-out of the canonical extension $(\cU_\cB)$ on $\cB$ along $\omega_{\check{\cB}} \to F_0$. For $\check{\chi} \in \check{\Lambda}$, the line bundle $\cL_{B, \check{\chi}} = \cL_{B \rvert B \times \{ \check{c}(\check{\chi})\}}$ extends to a line bundle $\cL_{\cB, \check{\chi}}$ on $\cB$. Consider a basis $\check{\chi}_1, \dots, \check{\chi}_n$ of $\Lambda$ and the smooth connected formal $R$-scheme
  \[
  \cX := \bbP\left(\cF_0\right) \times_\cB \bbP\left(\cO_B \oplus \cL_{\check{\chi}_1}\right) \times_\cB \cdots \times_\cB \bbP\left(\cO_B \oplus \cL_{\check{\chi}_n}\right).
  \]
Now $\bbA(\cF_E)$ is the complement of a Cartier divisor in $X := \cX_\eta$; thus by \Cref{Lemma:ContractibleOpenInSmooth} it is contractible. Since $\bbA(\cF_A)$ is the quotient of $\bbA(\cF_E)$ by the (free) action of $\Lambda$, the statement follows.
\end{proof}

Applying this with $F = \omega_{\check{A}}$ and $\phi_A = \id$ gives that $E^\natural$ is contractible and $E^\natural \to A^\natural$ is a universal cover, which justifies the name universal cover for $E^\natural$.

\renewcommand\thesection{\Alph{section}}
\setcounter{section}{0}

\section*{Appendix. Connections} \label{appendix}
\addcontentsline{toc}{section}{Appendix. Connections}
\refstepcounter{section}

\subsection{Vector bundles on first-order thickenings} \label{sec:VectorBundleFirstOrderThick}

Let $X_0$ and $X_1$ be schemes endowed with a closed immersion $s \colon X_0 \to X_1$ and a morphism $f \colon X_1 \to X_0$ such that $f \circ s = \id_{X_0}$. Suppose that the sheaf of ideals $I := \Ker (\cO_{X_1} \to \cO_{X_0})$ is of square zero. For a quasi-coherent $\cO_{X_1}$-module $F$, consider the short exact sequence
\begin{equation} \tag{$F$} 0 \too IF \too F \too  F /I F \too 0. \end{equation}
The sequence of $\cO_{X_0}$-modules
\begin{equation} \tag*{$f_\ast (F)$} 0 \too f_\ast (IF) \too f_\ast F \too s^\ast F \too 0  \end{equation}
obtained by pushing forward $(F)$ along $f$ is short exact because the morphism $f$ is affine (affineness only depends on the underlying reduced structure). Pushing forward a homomorphism of $\cO_{X_1}$-modules $\phi \colon F \to F'$ yields a homomorphism of short exact sequences of $\cO_{X_0}$-modules $f_\ast \phi \colon f_\ast (F) \to f_\ast (F')$. The so-defined functor
\[ 
\left\{ \cO_{X_1}\textup{-modules}\right\} \too
{\left\{ 
\begin{array}{c}
\textup{short exact sequences} \\
\textup{of $\cO_{X_0}$-modules}
\end{array} \right\}}, \quad F \longmapsto f_\ast (F)
\]
is faithful. Moreover, an isomorphism of $\cO_{X_1}$-modules $f^\ast s^\ast F \to F$ induces a splitting of the short exact $f_\ast (F)$.

\begin{proposition} \label{Prop:IsomorphismAndSplittings} The bijection $ \Hom(f^\ast s^\ast F, F) \to \Hom(s^\ast F, f_\ast F)$ given by adjunction for a vector bundle $F$ on $X_1$ induces a bijection
\[
 \left\{
\begin{array}{c}
\textup{isomorphisms $\rho \colon f^\ast s^\ast F \to F$} \\
\textup{such that $s^\ast \rho = \id_{s^\ast F}$} 
\end{array} \right\}
\cong
 \left\{
\begin{array}{c}
\textup{splittings of the short} \\
\textup{exact sequence $f_\ast (F)$} 
\end{array} \right\}.
\]
\end{proposition}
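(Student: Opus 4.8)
The plan is to take the adjunction isomorphism $\Hom_{\cO_{X_1}}(f^\ast s^\ast F, F) \cong \Hom_{\cO_{X_0}}(s^\ast F, f_\ast F)$ as the bijection in question, and to show that under it the condition ``$\rho$ is an isomorphism with $s^\ast\rho = \id_{s^\ast F}$'' matches up with the condition ``$\sigma$ is a splitting of $f_\ast(F)$''. Since everything in sight localizes well, I would first reduce to the affine case $X_0 = \Spec R_0$, $X_1 = \Spec R_1$. Because $f\circ s = \id_{X_0}$, the structure map $R_0\to R_1$ splits the surjection $R_1\to R_0$ with kernel $I$, and $I^2 = 0$; hence $R_1 = R_0\oplus I$ as an $R_0$-algebra with square-zero ideal $I$. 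Writing $F = \widetilde M$ with $M$ a finitely generated projective $R_1$-module and $\bar M := M/IM$, one gets $s^\ast F = \widetilde{\bar M}$, $f^\ast s^\ast F \leftrightarrow \bar M\otimes_{R_0}R_1 = \bar M\oplus(\bar M\otimes_{R_0}I)$, and $f_\ast F\leftrightarrow M$ viewed as an $R_0$-module; the adjunction sends $\rho$ to its restriction $\rho_0 := \rho|_{\bar M\otimes 1}\colon \bar M\to M$, and conversely $\rho$ is recovered from the $R_0$-linear map $\rho_0$ by $R_1$-linearity, since $\rho(\bar m\otimes r) = r\,\rho_0(\bar m)$.

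Next I would compute both sides explicitly in these terms. The sequence $f_\ast(F)$ becomes $0\to IM\to M\xrightarrow{q}\bar M\to 0$ of $R_0$-modules, so a splitting of it is precisely an $R_0$-linear section $\sigma$ of $q$. On the other hand, unwinding $s^\ast$ as $-\otimes_{R_1}R_0$ and using $s^\ast f^\ast = (fs)^\ast = \id$, I expect to obtain the bookkeeping identity $s^\ast\rho = q\circ\rho_0$ as endomorphisms of $\bar M = s^\ast F$. Granting this, $s^\ast\rho = \id_{s^\ast F}$ holds exactly when $\rho_0$ is a section of $q$, i.e. a splitting of $f_\ast(F)$; since the adjunction is bijective, this already gives a bijection between $\{\rho : s^\ast\rho = \id\}$ and the set of splittings. (A coordinate-free route to the same identity: write $q = f_\ast(\pi_F)$ for the canonical surjection $\pi_F\colon F\to F/IF = s_\ast s^\ast F$, and combine naturality of the adjunctions $f^\ast\dashv f_\ast$ and $s^\ast\dashv s_\ast$ with the triangle identity for $s^\ast\dashv s_\ast$, since $\pi_F$ is the unit; this identifies $q\circ\sigma$ with the $s^\ast\dashv s_\ast$--adjoint of $\pi_F\circ\rho$, which is $s^\ast\rho$.)

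Finally I would check that every homomorphism $\rho$ with $s^\ast\rho = \id_{s^\ast F}$ is automatically an isomorphism, so that restricting the adjunction bijection to such $\rho$ lands exactly in the set of splittings and surjects onto it. Applying the right-exact functor $s^\ast$ to $\coker\rho$ gives $\coker\rho = I\cdot\coker\rho$, hence $\coker\rho = 0$ as $I^2 = 0$, so $\rho$ is surjective; and since $F$ is locally free the sequence $0\to\ker\rho\to f^\ast s^\ast F\to F\to 0$ is locally split, so $s^\ast$ keeps it exact and $s^\ast\ker\rho = \ker(s^\ast\rho) = 0$, whence $\ker\rho = 0$ by the same nilpotence argument. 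This refines the remark preceding the Proposition (``an isomorphism $f^\ast s^\ast F\to F$ induces a splitting of $f_\ast(F)$'') into an exact matching. The only genuine obstacle is the bookkeeping identity $s^\ast\rho = q\circ\sigma$ relating the pullback of $\rho$ along $s$ to the splitting produced by adjunction; everything else is a short dévissage or formal nonsense.
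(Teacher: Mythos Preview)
Your proof is correct. Both you and the paper localize and exploit the square-zero structure of $I$, but the arguments diverge at the key step of showing that the adjoint $\Phi$ of a splitting is an isomorphism. The paper chooses a local decomposition $f_\ast F \cong s^\ast F \oplus (s^\ast F \otimes f_\ast I)$, writes the splitting as $v \mapsto (v,\epsilon(v))$, and then computes $\Phi$ explicitly as $(v,v') \mapsto (v, \epsilon(v)+v')$, which is visibly invertible. You instead run a Nakayama-type argument: $s^\ast(\coker\rho)=0$ forces $\coker\rho = I\cdot\coker\rho = 0$, and then local freeness of $F$ lets you apply the same trick to $\ker\rho$. Your route is a touch more conceptual and avoids choosing coordinates; the paper's route is shorter once one is willing to write things out. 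You are also more careful than the paper about the ``bookkeeping identity'' $s^\ast\rho = q\circ\sigma$, which the paper leaves implicit in the sentence ``the only thing to show is\ldots''; your adjunction-formalities derivation of it is a nice touch.
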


\begin{proof}  The only thing to show is that, for a splitting $\phi \colon s^\ast F \to f_\ast F$ of the short exact sequence $f_\ast (F)$, the homomorphism $\Phi \colon f^\ast s^\ast F \to F$ obtained by extending $\phi$ $\cO_{X_1}$-linearly  is an isomorphism. To check this, one may reason locally on $X_0$ and choose a splitting $ f_\ast F \cong s^\ast F \oplus s^\ast F \otimes f_\ast I$ of the short exact sequence $f_\ast (F)$. This allows for the identities $I F = I \otimes F$ and $f_\ast (IF) = f_\ast I \otimes s^\ast F$, which hold, respectively, because  $F$ is flat and because the ideal $I$ is of square zero. Via these identifications, the splitting $\phi$ is of the form $v \mapsto (v, \epsilon(v))$ for a homomorphism of $\cO_X$-modules $\epsilon \colon s^\ast F \to s^\ast F \otimes f_\ast I$. Write a section of $f^\ast s^\ast F$ as $(v , v')$ for sections $v$ of $s^\ast F$ and $v'$ of $s^\ast F \otimes f_\ast I$; then the map $\Phi$ is defined as $(v, v') \mapsto (v, \epsilon(v) + v')$, where the term $\epsilon(v')$ vanished because the ideal $I$ is of square zero. Such an expression  clearly defines an isomorphism, which concludes the proof.
\end{proof}

\subsection{Tensor product and Baer sums} For simplicity, assume $X_1$ to be the first-order thickening of $\bbV(E^\vee)$ along its zero section $s$, where $E$ is a vector bundle on $X_0$ and $\bbV(E^\vee)$ the total space of its dual. Concretely, the scheme $X_1$ is the spectrum of the $\cO_{X_0}$-module $\cO_{X_0} \oplus E$ endowed with an $\cO_{X_0}$-algebra structure defined by the formula $(a, v)\cdot(b,w) = (ab, aw+bv)$.
In what follows, it will be important to distinguish whether tensor products are taken with respect to $\cO_{X_0}$ or $\cO_{X_1}$. To mark the difference but at the same time make the notation lighter, for $i = 0, 1$ and $\cO_{X_i}$-modules $V$ and $V'$, write $V \otimes_i V'$ instead of $V \otimes_{\cO_{X_i}} V'$. For a vector bundle $V$ on $X_1$, set $V_0 := s^\ast V$, so that the vector bundle  $f_\ast V$ on $X_0$ is an extension of $V_0$ by $V_0 \otimes_0 E$.  In order to make sense of the following statement, observe that, for vector bundles $V$ and $V'$ on $X_1$, the $\cO_{X_0}$-modules $f_\ast V \otimes_0 V_0'$, $f_\ast V' \otimes_0 V_0$ and $f_\ast (V \otimes_{1} V')$ are all extensions of $V_0 \otimes_0 V'_0$ by $E \otimes_0 V_0 \otimes_0 V'_0$.

\begin{proposition} \label{Prop:TensorProductBecomesBaerSum} The extension $f_\ast (V \otimes_{1} V')$ is the Baer sum of $f_\ast V \otimes_0 V_0'$ and $f_\ast V' \otimes_0 V_0$.
\end{proposition}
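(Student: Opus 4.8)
The plan is to reduce the assertion to an equality of extension classes in $\rH^1\bigl(X_0,\cHom_{\cO_{X_0}}(V_0\otimes_0 V_0', E\otimes_0 V_0\otimes_0 V_0')\bigr)$ and to compute those classes with \v{C}ech cocycles. It is worth first recording the conceptual picture: since $f$ is affine, $f_\ast$ is a symmetric monoidal equivalence from $\cO_{X_1}$-modules onto modules over the square-zero $\cO_{X_0}$-algebra $\cA:=f_\ast\cO_{X_1}=\cO_{X_0}\oplus E$, so that $f_\ast(V\otimes_1 V')=f_\ast V\otimes_{\cA}f_\ast V'$; the three short exact sequences in the statement are the defining extensions of the $\cA$-modules $f_\ast V\otimes_{\cA}f_\ast V'$, $f_\ast V\otimes_0 V_0'$, $f_\ast V'\otimes_0 V_0$, and what is to be proved is the Leibniz rule for the associated ``Atiyah classes'' under tensor product. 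All the operations involved — push-forward along $f$, pull-back along $s$, the tensor products, the Baer sum, and the map to $\rH^1$ — commute with restriction to opens of $X_0$ (equivalently of $X_1$, the two having the same underlying space), so it suffices to argue locally and compare cocycles.

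\textbf{Local trivializations.} Next I would pick an affine open cover $(U_i)$ of $X_0$ over which $V$ and $V'$ are free as $\cO_{X_1}$-modules; then $V|_{U_i}\cong f^\ast V_0|_{U_i}$ and $V'|_{U_i}\cong f^\ast V_0'|_{U_i}$, and composing with $f^\ast$ of an automorphism of $V_0$ (resp.\ $V_0'$) one may choose such isomorphisms $\rho_i\colon f^\ast V_0|_{U_i}\xrightarrow{\sim}V|_{U_i}$ and $\rho_i'\colon f^\ast V_0'|_{U_i}\xrightarrow{\sim}V'|_{U_i}$ with $s^\ast\rho_i=\id$ and $s^\ast\rho_i'=\id$; by \Cref{Prop:IsomorphismAndSplittings} these are exactly splittings of $f_\ast V|_{U_i}$ and $f_\ast V'|_{U_i}$. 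On $U_{ij}:=U_i\cap U_j$ the automorphism $\rho_j^{-1}\rho_i$ of $f^\ast V_0$ restricts to the identity along $s$, and a direct computation with the $\cO_{X_1}$-module structure of $f^\ast V_0=V_0\oplus(E\otimes_0 V_0)$ shows that any such automorphism has the form $\id+a_{ij}$ with $a_{ij}$ a section of $\cHom_{\cO_{X_0}}(V_0, E\otimes_0 V_0)$; the family $(a_{ij})$ is a $1$-cocycle representing $[f_\ast V]$, and similarly $(a_{ij}')$ represents $[f_\ast V']$.

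\textbf{The cocycle of the tensor product.} Using $f^\ast V_0\otimes_1 f^\ast V_0'=f^\ast(V_0\otimes_0 V_0')$ and the trivializations $\rho_i\otimes\rho_i'$, the transition automorphisms of $f_\ast(V\otimes_1 V')$ are $(\id+a_{ij})\otimes_1(\id+a_{ij}')$. The key point — the one deserving an honest line — is that the quadratic term $a_{ij}\otimes_1 a_{ij}'$ vanishes: it takes values in $(E\otimes_0 V_0)\otimes_1(E\otimes_0 V_0')$, and any elementary tensor there is of the form $\bigl((0,e)\cdot w\bigr)\otimes_1\bigl((0,e')\cdot w'\bigr)=(0,e)(0,e')\cdot(w\otimes_1 w')=0$ because $E\cdot E=0$ in $\cA$. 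Hence the cocycle of $f_\ast(V\otimes_1 V')$ is $a_{ij}\otimes\id_{V_0'}+\id_{V_0}\otimes a_{ij}'$, which is precisely the sum of the cocycle $a_{ij}\otimes\id_{V_0'}$ of $f_\ast V\otimes_0 V_0'$ and the cocycle $\id_{V_0}\otimes a_{ij}'$ of $f_\ast V'\otimes_0 V_0$, the latter read through the canonical symmetry $E\otimes_0 V_0'\otimes_0 V_0\cong E\otimes_0 V_0\otimes_0 V_0'$. Since the Baer sum of extensions adds \v{C}ech classes, this gives $[f_\ast(V\otimes_1 V')]=[f_\ast V\otimes_0 V_0']+[f_\ast V'\otimes_0 V_0]$; tracing through the construction, the resulting isomorphism of extensions does not depend on the chosen trivializations, so it is canonical.

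\textbf{Expected obstacle.} I expect the real difficulty to be purely organizational: keeping the three copies of $E\otimes_0 V_0\otimes_0 V_0'$ (and the symmetry swap) straight, verifying in full that an $\cO_{X_1}$-automorphism of $f^\ast V_0$ trivial along $s$ is of the claimed shape $\id+a_{ij}$, and — if one wants the canonical isomorphism rather than just equality of classes — checking that the local comparison isomorphisms are independent of the trivializations $\rho_i,\rho_i'$. None of this is deep, but it is exactly the bookkeeping that the statement is meant to package away.
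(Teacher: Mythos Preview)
Your argument is correct and establishes the equality of extension classes; the key step (the vanishing of the cross term $a_{ij}\otimes_1 a_{ij}'$ because $E^2=0$) is exactly the same fact the paper exploits. However, the route is genuinely different from the paper's. The paper never chooses a cover or trivializations: it writes down the natural surjection $\phi\colon f_\ast V\otimes_0 f_\ast V'\to f_\ast(V\otimes_1 V')$, identifies its kernel as $(E\otimes_0 V_0)\otimes_0(E\otimes_0 V_0')$ (again using $E^2=0$), and observes that this kernel coincides with that of the tautological map $\psi=(p\otimes\id,\id\otimes p')$ into $(V_0\otimes_0 f_\ast V')\oplus(f_\ast V\otimes_0 V_0')$, whose image is precisely the submodule $W$ used to build the Baer sum. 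The induced map $\tilde{\phi}\colon W\to f_\ast(V\otimes_1 V')$ is then checked to be the push-out along the sum map, so one obtains a \emph{specific} canonical isomorphism of extensions with no choices to track.

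What each approach buys: your \v{C}ech computation is quicker to internalize and makes the ``Leibniz rule'' visible, but it natively yields only an equality in $\Ext^1$; the canonicity you assert at the end still needs the independence-of-trivialization check you flag as the expected obstacle. The paper's coordinate-free construction delivers the canonical isomorphism for free, which matters downstream (e.g.\ in \Cref{Ex:PushingForwardRigidificationLineBundle} and \Cref{Prop:LinearizationCanonicalExtension}) where one pushes a given rigidification $\phi$ through this identification and needs to know exactly which isomorphism is being used.
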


\begin{proof} Unfortunately, the argument is quite clumsy and goes through the explicit construction of the Baer sum in question. To recall it,  let $p \colon f_\ast V \to V_0$ and $p' \colon f_\ast V' \to V_0'$ denote the homomorphisms given by restriction to $X_0$, and consider the $\cO_{X_0}$-submodule $ W\subseteq   (V_0  \otimes_0 f_\ast V' ) \oplus (f_\ast V \otimes_0 V_0')$ made of pairs whose components have the same image in $V_0 \otimes_0 V'_0$ via, respectively,  $p \otimes \id_{V'_0}$ and $\id_{V_0} {\otimes} p'$. The vector bundle $W$ on $X_0$ fits into the following short exact sequence of $\cO_{X_0}$-modules:
\begin{equation} \tag{$W$}
0 \too (E \otimes_0 V_0 \otimes_0 V'_0)^{\oplus 2} \too W \too V_0 \otimes_0 V'_0 \too 0. 
\end{equation}
The Baer sum mentioned in the statement is the push-out of the short exact sequence $(W)$ along the sum map $(E \otimes_0 V_0 \otimes_0 V'_0)^{\oplus 2} \to E \otimes_0 V_0 \otimes_0 V'_0$. This being said, consider the natural epimorphism of $\cO_{X_0}$-modules $\phi \colon f_\ast V \otimes_{0} f_\ast V' \to f_\ast (V \otimes_{1} V')$
given by universal property of tensor product. The vector bundle $E$, seen as a sheaf of ideals of $f_\ast \cO_{X_1}$, is of square zero; hence $\Ker \phi = (E \otimes_0 V_0) \otimes_0 (E \otimes_0 V'_0)$. The right-hand side of the previous equality is seen to also be  the intersection of the kernels of the homomorphisms $p \otimes \id_{f_\ast V'}$ and $\id_{f_\ast V'}  {\otimes} p'$. That is, upon setting 
\[ \psi = (p \otimes \id_{f_\ast V'}, \id_{f_\ast V'}  {\otimes} p') \colon f_\ast V \otimes_{0} f_\ast V'  \too (V_0 \otimes_0 f_\ast V') \oplus ( f_\ast V \otimes_0 V_0'), \]
the homomorphisms $\phi$ and $\psi$ share the same kernel. Moreover, the identity 
\[ (p \otimes \id_{V_0'}) \circ (\id_{f_\ast V} {\otimes} p') = p \otimes p' =  (\id_{V_0}  \otimes p') \circ ( p {\otimes}  \id_{f_\ast V'})  \]
of maps $f_\ast V \otimes_0 f_\ast V' \to V_0 \otimes_0 V_0'$ implies that the image of $\psi$ is $W$. Consequently, the homomorphism $\phi$ factors uniquely through an epimorphism $\tilde{\phi} \colon W \to f_\ast (V \otimes_1 V') $ of $\cO_{X_0}$-modules. The kernel of $\phi$ is contained in that of $p \otimes p'$, and the homomorphism $\tilde{\phi}$ acts on the quotient
\[ 
\frac{\ker (p \otimes p')}{\ker \phi} = \frac{K + K'}{K \cap K'} \cong \frac{K}{K \cap K'} \oplus \frac{K'}{K \cap K'}= (E \otimes_0 V_0 \otimes_0 V_0')^{\oplus 2}
\]
as the sum map, where $K$ and $K'$ are  the kernels of the homomorphisms $p \otimes \id_{f_\ast V'}$ and $\id_{f_\ast V} \otimes p'$, respectively. In other words, the homomorphism $\tilde{\phi}$ fits into the following commutative and exact diagram of $\cO_{X_0}$-modules:
\[ 
\begin{tikzcd}
0 \ar[r] & (E \otimes_0 V_0 \otimes_0 V'_0)^{\oplus 2} \ar[d, "\textup{sum}"] \ar[r]  & W \ar[d, "\tilde{\phi}"]  \ar[r] & V_0 \otimes_0 V'_0 \ar[d, equal] \ar[r] & 0\ \\
0 \ar[r] & E \otimes_0 V_0 \otimes_0 V'_0 \ar[r] & f_\ast (V \otimes_{1} V') \ar[r] & V_0 \otimes_0 V'_0 \ar[r] & 0,
\end{tikzcd}
\]
where the upper row is the short exact sequence $(W)$.  To put it differently, the lower row is the push-out of the upper one along the sum map; that is, the extension $f_\ast (V \otimes_1 V')$ is the desired Baer sum.
\end{proof}

\subsection{Connections} Let $S$ be a scheme, $f \colon X \to S$ a separated morphism of schemes, $\Delta_X \colon X \to X \times_S X$ the diagonal morphism and $I := \Ker (\cO_{X \times_S X} \to \Delta_{X \ast} \cO_X)$ its augmentation ideal. With this notation, the sheaf of differentials (denoted by $\Omega^1_{f}$ or $\Omega^1_{X/S}$) relative to $f$ is the $\cO_X$-module $\Delta_X^\ast I$. Let $\Delta_{X, 1}$ be the first infinitesimal neighbourhood of the diagonal, that is, the closed subscheme of $X \times_S X$ defined by the sheaf of ideals $I^2$. For $i = 1, 2$, let $p_i \colon \Delta_{X, 1} \to X$ be the morphism induced by the $\supth{i}$ projection.  The $\cO_X$-module $\cJ_f^1 := p_{1 \ast} \cO_{\Delta_{X, 1}}$ is called the \emph{sheaf of first-order jets}. Each $p_i$ induces a homomorphism of $f^{-1} \cO_S$-algebras $j_i \colon \cO_X \to \cJ^1_f$. The homomorphism of $f^{-1}\cO_S$-modules $\rd_f := j_2 - j_1 \colon \cO_{X} \to \Omega^1_f $ is called the \emph{canonical derivation}.

\begin{definition}   A \emph{connection} on a vector bundle $F$ on $X$ is an isomorphism $\nabla \colon p_1^\ast F \to p_2^\ast F$ of vector bundles over $\Delta_{X, 1}$ whose restriction to the diagonal $\Delta_X^\ast \nabla$ is the identity of $F$.
\end{definition}

\subsection{Atiyah extension} \label{sec:AtiyahExtensions} The kernel of the restriction map $\cO_{\Delta_{X, 1}}  \to i_\ast \cO_X$, where $i \colon X \to \Delta_{X, 1}$ is the closed immersion induced by the diagonal, is by definition of square zero. This allows one to adopt the notation introduced in \Cref{sec:VectorBundleFirstOrderThick} with $X_0 = X$, $X_1 = \Delta_{X, 1}$, $s = i$ and $f = p_1$. Pushing forward the $\cO_{\Delta_{X, 1}}$-module $p_2^\ast F$ along $p_1$ yields the short exact sequence
\begin{equation}
\tag{$\cJ^1_{f}(F)$}
0 \too \Omega^1_{f} \otimes F\too \cJ^1_{f}(F) \too F \too 0 
\end{equation}
of $\cO_S$-modules, where  $\cJ^1_{f}(F) := p_{1 \ast} p_2^\ast F$ is the \emph{$\cO_{X}$-module of first-order jets of\, $F$}. Applied to the $\cO_{\Delta_{X,1}}$-module $p_2^\ast F$, \Cref{Prop:IsomorphismAndSplittings} implies the following. 

\begin{proposition} \label{Prop:CharacterizationConnections}The bijection $\Hom(p_1^\ast F, p_2^\ast F) \to \Hom(F, \cJ^1_{X/S}(F))$ given by adjunction induces a bijection
\[
\left\{
\begin{array}{c}
\textup{connections on $F$}
\end{array} \right\}
\cong
 \left\{
\begin{array}{c}
\textup{splittings of the short} \\
\textup{exact sequence $(\cJ^1_{X/S}(F))$} 
\end{array} \right\}.
\]
\end{proposition}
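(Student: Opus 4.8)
The plan is to deduce this directly from \Cref{Prop:IsomorphismAndSplittings}. Recall the set-up there: one needs a closed immersion $s \colon X_0 \to X_1$ together with a morphism $f \colon X_1 \to X_0$ with $f \circ s = \id_{X_0}$ and such that the ideal $\Ker(\cO_{X_1} \to \cO_{X_0})$ is of square zero, plus a vector bundle on $X_1$. Here I would take $X_0 = X$, $X_1 = \Delta_{X,1}$, $s = i$ the closed immersion induced by the diagonal, and $f = p_1$ the first projection; since both projections restrict to the identity on the diagonal we have $p_1 \circ i = \id_X$, and the square-zero hypothesis is precisely the observation made at the start of \Cref{sec:AtiyahExtensions}. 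As the vector bundle on $\Delta_{X,1}$ I would feed in $p_2^\ast F$, where $F$ is the given vector bundle on $X$.

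The next step is to match all the terms produced by \Cref{Prop:IsomorphismAndSplittings} with those appearing in the statement. Since $p_2 \circ i = \id_X$, there is a canonical isomorphism $s^\ast(p_2^\ast F) = i^\ast p_2^\ast F \cong F$; consequently $f^\ast s^\ast(p_2^\ast F) \cong p_1^\ast F$, while $f_\ast(p_2^\ast F) = p_{1\ast} p_2^\ast F$ is by definition the $\cO_X$-module of first-order jets of $F$, and the short exact sequence $f_\ast(p_2^\ast F)$ in the notation of \Cref{sec:VectorBundleFirstOrderThick} is precisely $(\cJ^1_{X/S}(F))$. Under these identifications, an isomorphism $\rho \colon f^\ast s^\ast(p_2^\ast F) \to p_2^\ast F$ with $s^\ast \rho = \id$ is the same datum as an isomorphism $\nabla \colon p_1^\ast F \to p_2^\ast F$ whose restriction $\Delta_X^\ast \nabla$ along the diagonal is $\id_F$, i.e.\ a connection on $F$, and a splitting of $f_\ast(p_2^\ast F)$ is a splitting of $(\cJ^1_{X/S}(F))$. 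Finally, the adjunction isomorphism $\Hom(f^\ast s^\ast(p_2^\ast F), p_2^\ast F) \cong \Hom(s^\ast(p_2^\ast F), f_\ast(p_2^\ast F))$ used in \Cref{Prop:IsomorphismAndSplittings} becomes, under the above identifications, exactly the adjunction $\Hom(p_1^\ast F, p_2^\ast F) \cong \Hom(F, \cJ^1_{X/S}(F))$ of the statement. Hence the bijection furnished by \Cref{Prop:IsomorphismAndSplittings} is the asserted one.

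I do not expect a genuine obstacle here; the one point requiring a little care is the compatibility of the two adjunction isomorphisms with the canonical identification $i^\ast p_2^\ast F \cong F$, which is purely formal once the unit and counit of the $(p_1^\ast, p_{1\ast})$-adjunction are written out. I would record this in a sentence or two and conclude.
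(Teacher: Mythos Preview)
Your proposal is correct and is exactly the paper's approach: the paper states the proposition as an immediate consequence of \Cref{Prop:IsomorphismAndSplittings} applied with $X_0 = X$, $X_1 = \Delta_{X,1}$, $s = i$, $f = p_1$, and the vector bundle $p_2^\ast F$. Your write-up simply spells out the identifications that the paper leaves implicit.
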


Set $\At_{f}(F) := \cHom(E, \cJ^1_{f}(F))$. The short exact sequence of $\cO_X$-modules
\begin{equation} \tag{$\At_{f}(F)$} 0 \too \Omega^1_{f} \otimes \cEnd F \too \At_{f} (F) \too \cEnd F \too 0
\end{equation}
obtained as the tensor product of $(\cJ^1_{f}(F))$ with $F^\vee$ is called the \emph{relative Atiyah extension of\, $F$}. When $F$ is a line bundle, $\cEnd F \cong \cO_X$, and splittings of the Atiyah extension $(\At_f(F))$ are in bijection with those of $(\cJ^1_f(F))$, thus with connections on $F$.

\subsection{Infinitesimal rigidifications} \label{sec:InfinitesimalRigidifications}  Let $\pr_1, \pr_2 \colon X \times_{S} X \to X$ be, respectively, the first and the second projection and $x \colon S \to X$ a section of the structural morphism $f \colon X \to S$. Let $X_1$ be the first-order thickening of $X$ along $x$, $\iota \colon X_1 \to X$ the closed immersion and $\pi = f \circ \iota \colon X_1 \to S$ the structural morphism.

\begin{definition} An \emph{infinitesimal rigidification of\, $F$ at $x$}  is an isomorphism of $\cO_{X_1}$-modules $\rho \colon \pi^\ast x^\ast F \to \iota^\ast F$ such that $x^\ast \rho$ is the identity.
\end{definition}

Note that if $\nabla \colon p_1^\ast F \to p_2^\ast F$ is a connection on $F$, then the homomorphism of $\cO_{X_1}$-modules $\tau^\ast \nabla$ is an infinitesimal rigidification of $F$ at $x$. Now, it is possible to give a characterization of infinitesimal rigidifications similar to that of connections. For, the augmentation ideal of the closed immersion $x_1 \colon S \to X_1$ induced by $x$ is of square zero. This permits one to employ the conventions introduced in \Cref{sec:VectorBundleFirstOrderThick} with $X_0 = S$, $s = x_1$ and $f = \pi$. For instance, if $F$ is a vector bundle, then pushing forward the $\cO_{X_1}$-module $\iota^\ast F$ along $\pi$ yields the short exact sequence
\begin{equation}
\tag*{$\pi_\ast (\iota^\ast F)$}
0 \too x^\ast \Omega^1_{X/S} \otimes x^\ast F\too \pi_\ast \iota^\ast F \too x^\ast F \too 0 
\end{equation}
of $\cO_S$-modules. Applied to $\cO_{X_1}$-module $\iota^\ast F$, \Cref{Prop:IsomorphismAndSplittings} reads as follows. 

\begin{proposition} \label{Prop:RigidificationsAndSplittings} The bijection $\Hom(\pi^\ast x^\ast  F, \iota^\ast F) \to \Hom(x^\ast F, \pi_\ast \iota^\ast F)$ given by adjunction induces a bijection
\[
\left\{
\begin{array}{c}
\textup{infinitesimal} \\
\textup{rigidifications of $F$ at $x$} 
\end{array} \right\}
\cong
 \left\{
\begin{array}{c}
\textup{splittings of the short} \\
\textup{exact sequence $\pi_\ast (\iota^\ast F)$} 
\end{array} \right\}.
\]
\end{proposition}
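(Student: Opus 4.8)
The plan is to deduce \Cref{Prop:RigidificationsAndSplittings} directly from \Cref{Prop:IsomorphismAndSplittings}, applied to the $\cO_{X_1}$-module $G := \iota^\ast F$ in the situation of \Cref{sec:VectorBundleFirstOrderThick} with $X_0 = S$, $X_1$ the first-order thickening of $X$ along $x$, $s = x_1 \colon S \to X_1$ the closed immersion induced by $x$, and $f = \pi$. The hypotheses required there hold: $\pi \circ x_1 = f \circ x = \id_S$, and the ideal $I := \Ker(\cO_{X_1} \to x_{1\ast} \cO_S)$ is of square zero since $X_1$ is the first infinitesimal neighbourhood of $x(S)$ in $X$.

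Next I would record the relevant identifications. As $\iota \circ x_1 = x$, one has $s^\ast G = x^\ast F$, hence $f^\ast s^\ast G = \pi^\ast x^\ast F$ and $f_\ast G = \pi_\ast \iota^\ast F$. Moreover, $G$ being flat and $I$ being of square zero, $IG \cong I \otimes_{\cO_S} s^\ast G$; using the canonical identification of the conormal sheaf of the section $x$ with $x^\ast \Omega^1_{X/S}$, this yields $f_\ast(IG) \cong x^\ast \Omega^1_{X/S} \otimes x^\ast F$. Consequently the short exact sequence $f_\ast(G)$ furnished by \Cref{sec:VectorBundleFirstOrderThick} coincides with the sequence $\pi_\ast(\iota^\ast F)$ displayed just before the statement---a point already recorded there.

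Finally I would translate the conclusion of \Cref{Prop:IsomorphismAndSplittings}: under the adjunction bijection $\Hom(f^\ast s^\ast G, G) \cong \Hom(s^\ast G, f_\ast G)$, the isomorphisms $\rho \colon f^\ast s^\ast G \to G$ with $s^\ast \rho = \id_{s^\ast G}$ correspond bijectively to the splittings of $f_\ast(G)$. In the present notation, an isomorphism $\rho \colon \pi^\ast x^\ast F \to \iota^\ast F$ with $x^\ast \rho = \id_{x^\ast F}$ is precisely an infinitesimal rigidification of $F$ at $x$, while $f_\ast(G) = \pi_\ast(\iota^\ast F)$; this gives the asserted bijection. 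I do not expect a genuine obstacle: the proof is a transcription of \Cref{Prop:IsomorphismAndSplittings} into the notation of this subsection, the only mildly substantive ingredient being the identification $I \cong x^\ast \Omega^1_{X/S}$ of the conormal sheaf of a section, which is standard and already built into the displayed sequence $\pi_\ast(\iota^\ast F)$.
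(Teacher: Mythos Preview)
Your proposal is correct and follows exactly the approach the paper takes: the paper does not give a separate proof but simply states, just before the proposition, ``Applied to $\cO_{X_1}$-module $\iota^\ast F$, \Cref{Prop:IsomorphismAndSplittings} reads as follows.'' Your write-up is a careful expansion of this one-line reduction, spelling out the identifications $s^\ast G = x^\ast F$, $f_\ast(G) = \pi_\ast(\iota^\ast F)$, and the conormal-sheaf isomorphism $I \cong x^\ast \Omega^1_{X/S}$ that the paper leaves implicit.
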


Let $\tau \colon X_1 \to \Delta_{X, 1}$ be the morphism determined by $p_{1} \circ \tau = x \circ \pi$ and $p_{2} \circ \tau = \iota$. Consider the homomorphism $\psi \colon p_2^\ast F \to \tau_\ast \iota^\ast F$ of $\cO_{\Delta_{X, 1}}$-modules given by adjunction (note the equality $\iota^\ast F = \tau^\ast p_{2}^\ast F$). Pushing it forward along the morphism $p_1$ gives a homomorphism 
\[ \pi_\ast \psi \colon (\cJ^1_{X/S}(F)) \too p_{1 \ast} \tau_\ast (\iota^\ast F)\] of short exact sequences of $\cO_X$-modules. Now, by the definition of $\tau$, the following square is commutative:
\begin{center}
\begin{tikzcd}
X_1 \ar[r, "\pi"] \ar[d, "\tau"] & S\ \ar[d, "x"] \\
\Delta_{X, 1} \ar[r, "p_1"] & X\rlap{.}
\end{tikzcd}
\end{center}
Therefore, the short exact sequence $p_{1 \ast} \tau_\ast (\iota^\ast F)$ of $\cO_X$-modules is nothing but the push-forward along the closed immersion $x$ of the short exact sequence $\pi_\ast(\iota^\ast F)$ of $\cO_S$-modules. 

\begin{proposition} \label{Prop:RigidificationsAndPrincipalParts}  The homomorphism $ \phi \colon x^\ast (\cJ^1_{X/S}(F)) \to \pi_\ast(\iota^\ast F)$ of short exact sequence of\, $\cO_S$-modules adjoint to $\pi_\ast \psi$ is an isomorphism.
\end{proposition}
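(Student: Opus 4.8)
The plan is to recognise $\phi$ as the base-change morphism attached to the commutative square
\[
\begin{tikzcd}
X_1 \ar[r, "\pi"] \ar[d, "\tau"] & S \ar[d, "x"] \\
\Delta_{X, 1} \ar[r, "p_1"] & X\rlap{,}
\end{tikzcd}
\]
which commutes since $p_1 \circ \tau = x \circ \pi$ by the definition of $\tau$. Put $G := p_2^\ast F$, so that $\tau^\ast G = \iota^\ast F$. Unwinding the definitions, the homomorphism $\pi_\ast \psi$ (that is, the pushforward of $\psi$ along $p_1$) is the image under $p_{1\ast}$ of the unit $G \to \tau_\ast \tau^\ast G$, read through the identification $p_{1\ast} \tau_\ast = x_\ast \pi_\ast$; hence its adjoint $\phi$ under $x^\ast \dashv x_\ast$ is exactly the base-change morphism $x^\ast p_{1\ast} G \to \pi_\ast \tau^\ast G$.

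With this identification in place, it remains to carry out two routine verifications. First, the square above is Cartesian, i.e.\ $\tau$ identifies $X_1$ with $\Delta_{X, 1} \times_{X, p_1} S$. The assertion is local on $X$, so one reduces to $X = \Spec B$ and $S = \Spec A$, with $x$ given by a surjection $B \to A$ of kernel $I$; then $\Delta_{X, 1} = \Spec(B \otimes_A B / J^2)$ for $J = \ker(B \otimes_A B \to B)$, and a direct computation shows that quotienting by the ideal generated by $I \otimes 1$ turns $B \otimes_A B$ into $B$ and $J$ into $I$, so that $(B \otimes_A B / J^2) \otimes_B A \cong B / I^2$ compatibly with $\tau$. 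Second, $p_1 \colon \Delta_{X, 1} \to X$ is an affine morphism: this too is local on $X$, and the preimage of an affine open $U = \Spec B$ is the first-order thickening of the diagonal of $U$ inside $U \times_S U$, which is a square-zero thickening of $\Delta_U \cong U$, hence affine (affineness depends only on the reduced structure).

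Since the pushforward along an affine morphism commutes with arbitrary base change --- an affine morphism being a relative spectrum, and both operations being computed on the structure algebra --- the base-change morphism $\phi$ is an isomorphism of $\cO_S$-modules, and being a morphism of short exact sequences it is an isomorphism of such. Alternatively one may avoid the base-change formalism and, in the affine local picture, with $F$ corresponding to a finite projective $B$-module $M$, simply note that $\phi$ is the isomorphism $A \otimes_B \cJ^1_{B/A}(M) \cong M / I^2 M$ induced by $(B \otimes_A B / J^2) \otimes_B A \cong B / I^2$. I do not expect a genuine obstacle here: the only care needed is in checking that $\phi$ really is the base-change morphism and in keeping the two $B$-module structures on $B \otimes_A B$ straight throughout the ring computation; after that the statement is formal.
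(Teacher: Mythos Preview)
Your proof is correct but takes a genuinely different route from the paper's. The paper argues directly via the five lemma: it writes out $\phi$ as a morphism between the two short exact sequences
\[
\begin{tikzcd}[row sep=12pt]
0 \ar[r] & x^\ast \Omega^1_{X/S} \otimes x^\ast F \ar[r] \ar[d, equal] & x^\ast \cJ^1_{X/S}(F) \ar[r] \ar[d, "\phi"] & x^\ast F \ar[r] \ar[d, equal] & 0 \\
0 \ar[r] & x^\ast \Omega^1_{X/S} \otimes x^\ast F \ar[r] & \pi_\ast \iota^\ast F \ar[r] & x^\ast F \ar[r] & 0\rlap{,}
\end{tikzcd}
\]
observes that the outer vertical arrows are the identity (since $\psi$ restricted to the diagonal is evaluation at $x$, whose adjoint is the identity), and concludes by the five lemma. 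No Cartesian square, no base-change formalism.

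Your approach instead recognises $\phi$ as the base-change morphism for the square $(\tau, \pi, p_1, x)$, verifies that this square is Cartesian by a local ring computation, and then invokes the fact that pushforward along an affine morphism commutes with arbitrary base change on quasi-coherent sheaves. This is more conceptual and explains \emph{why} the isomorphism holds (it is an instance of affine base change), at the cost of the extra verification that the square is Cartesian. The paper's argument is quicker and entirely self-contained within the short exact sequence language already set up in the appendix, but it is less illuminating as to the underlying mechanism. Both are perfectly valid; your identification of $\phi$ with the base-change map is correct, and your justification that affine pushforward commutes with arbitrary base change (not just flat) is the right general principle here.
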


\begin{proof} The homomorphism $\psi \colon p_2^\ast F \to \tau_\ast \iota^\ast F$ restricted to the diagonal is the evaluation at $x$. Therefore, the homomorphism $\pi_\ast \psi$ of short exact sequence of $\cO_X$-modules is the following commutative diagram: 
\[ 
\begin{tikzcd}[row sep=15pt]
0 \ar[r] & \Omega^1_{X / S} \otimes F \ar[r] \ar[d, "\ev_x"] & \cJ^1_{X/ S}(F) \ar[r] \ar[d, "\pi_\ast \psi"]& F \ar[r] \ar[d, "\ev_x"] & 0 \\
0 \ar[r] & x_\ast x^\ast \Omega^1_{X / S} \otimes x_\ast x^\ast F \ar[r] & x_\ast \pi_\ast \iota^\ast F \ar[r] & x_\ast x^\ast F \ar[r] & 0\rlap{,}
\end{tikzcd}
\]
where $\ev_x$ is the evaluation at $x$. The homomorphism $\phi$, which is the adjoint to $\pi_\ast \psi$, reads as the  commutative diagram
\[ 
\begin{tikzcd}[row sep=15pt]
0 \ar[r] &  x^\ast \Omega^1_{X / S} \otimes  x^\ast F \ar[r] \ar[d, equal] & x^\ast \cJ^1_{X/ S}(F) \ar[r] \ar[d, "\phi"]&  x^\ast F \ar[r] \ar[d, equal] & 0 \\
0 \ar[r] &  x^\ast \Omega^1_{X / S} \otimes  x^\ast F \ar[r] &  \pi_\ast \iota^\ast F \ar[r] &  x^\ast F \ar[r] & 0
\end{tikzcd}
\]
of $\cO_S$-modules. The five lemma implies that $\phi$ is an isomorphism.
\end{proof}


\end{document}